\def\@tocline#1#2#3#4#5#6#7{\relax
	\ifnum #1>\c@tocdepth 
	\else
	\par \addpenalty\@secpenalty\addvspace{#2}%
	\begingroup \hyphenpenalty\@M
	\@ifempty{#4}{%
		\@tempdima\csname r@tocindent\number#1\endcsname\relax
	}{%
		\@tempdima#4\relax
	}%
	\parindent\z@ \leftskip#3\relax \advance\leftskip\@tempdima\relax
	\rightskip\@pnumwidth plus4em \parfillskip-\@pnumwidth
	#5\leavevmode\hskip-\@tempdima
	\ifcase #1
	\or\or \hskip 1em \or \hskip 2em \else \hskip 3em \fi%
	#6\nobreak\relax
	\dotfill\hbox to\@pnumwidth{\@tocpagenum{#7}}\par
	\nobreak
	\endgroup
	\fi}
\def\l@subsection{\@tocline{2}{0pt}{2pc}{5pc}{}}
\numberwithin{equation}{section}
\theoremstyle{plain}
\newtheorem{thm}{Theorem}[section]
\newtheorem*{thm*}{Theorem}
\newtheorem{mydef}[thm]{Definition}
\newtheorem{lem}[thm]{Lemma}
\newtheorem*{lem*}{Lemma}
\newtheorem*{prop*}{Proposition}
\theoremstyle{remark}
\newtheorem{rem}[thm]{Remark}
\newcommand{\R}{\mathbb{R}}
\newcommand{\C}{\mathbb{C}}
\newcommand{\N}{\mathbb{N}}
\newcommand{\T}{\mathbb{T}}
\newcommand{\Leray}{\mathcal{P}}
\newcommand{\mi}{m_i}
\newcommand{\Mi}{M_i}
\newcommand{\mf}{m_f}
\newcommand{\Mf}{M_f}
\newcommand{\veps}{\varepsilon}
\begin{document}
	
	
	\title[Small-data global existence in a model of superfluidity]{Small-data global existence of solutions for the Pitaevskii model of superfluidity}

	\author[Jang]{Juhi Jang}
	\address[Jang]{\newline
		Department of Mathematics \\ University of Southern California \\ Los Angeles, CA 90089, USA}
	\email[]{\href{juhijang@usc.edu}{juhijang@usc.edu}}
	
	\author[Jayanti]{Pranava Chaitanya Jayanti}
	\address[Jayanti]{\newline
		Department of Mathematics \\ University of Southern California \\ Los Angeles, CA 90089, USA}
	\email[]{\href{pjayanti@usc.edu}{pjayanti@usc.edu}}
	
	\author[Kukavica]{Igor Kukavica}
	\address[Kukavica]{\newline
		Department of Mathematics \\ University of Southern California \\ Los Angeles, CA 90089, USA}
	\email[]{\href{kukavica@usc.edu}{kukavica@usc.edu}}

	\date{\today}
	
	
	\keywords{Superfluids; Pitaevskii model; Navier-Stokes equation; Nonlinear Schr\"odinger equation; Global weak solutions; Existence}
	
	
	\maketitle
	
	\begin{abstract}
		We investigate a micro-scale model of superfluidity derived by Pitaevskii in 1959 \cite{Pitaevskii1959PhenomenologicalPoint} to describe the interacting dynamics between the superfluid and normal fluid phases of Helium-4. The model involves the nonlinear Schr\"odinger equation (NLS) and the Navier-Stokes equations (NSE), coupled to each other via a bidirectional nonlinear relaxation mechanism. Depending on the nature of the nonlinearity in the NLS, we prove global/almost global existence of solutions to this system in $\T^2$ -- strong in wavefunction and velocity, and weak in density.
	\end{abstract}

	\setcounter{tocdepth}{2} 
	\tableofcontents
	
	\pagebreak
	
	\section{Introduction}  \label{intro}
	
	Superfluids constitute a phase of matter that is achieved when certain substances are isobarically cooled, resulting in Bose-Einstein condensation. That Helium-4 (and also its isotope Helium-3) undergoes such a quantum mechanical phase transition was first experimentally discovered \cite{Kapitza1938Viscosity-Point,Allen1938FlowII} over 80 years ago and has been the subject of intense inquiry ever since. Despite this, a single theory that describes the phenomenon continues to elude us.
	
	The general picture is that at non-zero temperatures, there is a mixture of two interacting phases: the \textit{normal} fluid and the superfluid \cite{Paoletti2011QuantumTurbulence,Vinen:808382,Vinen2006AnTurbulence,Barenghi2014IntroductionTurbulence,Barenghi2001QuantizedTurbulence,Barenghi2014ExperimentalFluid}. It is important to note that this is not like classical multiphase flow, where one can define a clear boundary between the two phases. Instead, some atoms are in the normal fluid phase, and some are in the superfluid phase, with both fluids occupying the entire volume. The normal fluid is well-modeled by the Navier-Stokes equations (NSE), while the description of the superfluid varies by the length scale that we are interested in (see \cite{Berloff2014ModelingTemperatures,Jayanti2022AnalysisSuperfluidity} for a discussion). Briefly, the superfluid is described by the NSE at large scales \cite{Holm2001}, a vortex model at intermediate scales \cite{Schwarz1978TurbulenceCounterflow,Schwarz1985Three-dimensionalInteractions,Schwarz1988Three-dimensionalTurbulence}, and the nonlinear Schr\"odinger equation (NLS) at small scales \cite{Khalatnikov1969AbsorptionPoint,Carlson1996AVortices}. The macro-scale, NSE-based description is a current topic of numerical research \cite{Verma2019TheModel,Roche2009QuantumCascade,Salort2011MesoscaleTurbulence}, and has also been rigorously analyzed \cite{Jayanti2021GlobalEquations}. In this paper, we use the micro-scale, NLS-based model by Pitaevskii \cite{Pitaevskii1959PhenomenologicalPoint}, which has previously been considered in \cite{Jayanti2022LocalSuperfluidity,Jayanti2022UniquenessSuperfluidity}.
	
	A missing piece of the physics puzzle here is the nature of the interaction mechanism. It is known that the interaction between the fluids is dissipative/retarding. Pitaevskii thus derived a micro-scale model that intertwines the NLS (for the superfluid) and the NSE (for the normal fluid). The coupling is nonlinear, bidirectional and transfers mass, momentum, and energy between the two fluids. For the combined system of both phases, the model respects the conservation of total mass and total momentum, while the total energy decreases in accordance with the dissipation. 
	
	The NLS, in its most popular form, is fundamentally a dispersive partial differential equation with a cubic nonlinearity that models systems with low-energy wave interactions, such as dipolar quantum gases \cite{Carles2008OnGases,Sohinger2011BoundsEquations}. The well-posedness issues of NLS have been tackled in many situations \cite{CollianderWell-posednessEquations}, and its scattering solutions \cite{Tao2006NonlinearAnalysis,Dodson2016Global2} have been of particular interest. The NLS can also be recast as a system of compressible Euler equations (referred to as \textit{quantum hydrodynamics} or QHD) with an additional \textit{quantum pressure} term \cite{Carles2012MadelungKorteweg}. This system is a special case of the more general Korteweg models, subject to much mathematical analysis. Hattori and Li \cite{Hattori1994SolutionsType} showed that the 2D QHD equations are locally well-posed for high-regularity data, and improved this to global well-posedness in the case of small data \cite{Hattori1996GlobalMaterials}. J\"ungel \cite{Jungel2002LocalEquations} established local strong solutions to the QHD-Poisson system, formed by including a potential governed by the Poisson equation. The same model possesses local-in-time classical solutions in 1D when the data is highly regular \cite{Jungel2004QuantumDecay}. For initial conditions close to a stationary state, the solutions are global-in-time and converge exponentially fast to the stationary state. Blow-up criteria have also been derived for QHD \cite{Wang2020AModel,Wang2021AModel}. While the discussion so far has focused on strong solutions, there has also been rising interest in the weak formulation of QHD-like models. Antonelli and Marcati \cite{Antonelli2009OnDynamics,Antonelli2012TheDimensions,Antonelli2015FiniteSuperfluidity} introduced the novel fractional step method in the pursuit of finite-energy global weak solutions. The idea was to revert (from QHD) to NLS, which was easier to solve, and account for collision-induced momentum transfer via periodic updates to the wavefunction. In this process, the occurrence of quantum vortices could also be characterized by imposing irrotationality of the velocity field (away from vacuum regions). Using special test functions that permit better control of the quantum pressure term, J\"ungel \cite{Jungel2010GlobalFluids} proved that the viscous QHD system admits weak solutions in 2D. For small values of viscosity, these solutions were global in time. The proof utilized a redefinition of the velocity that converts the hyperbolic continuity equation into a parabolic one, a technique that was pioneered by Bresch and Desjardins \cite{Bresch2004QuelquesKorteweg} for Korteweg systems in general. Vasseur and Yu \cite{Vasseur2016GlobalDamping} expanded J\"ungel's result to a wider class of test functions while adding some physically-motivated drag terms. Various forms of damping have appeared in the literature, primarily serving two different roles: (i) as an approximating scheme for both the compressible Navier-Stokes with degenerate viscosities \cite{Li2015GlobalViscosities,Vasseur2016ExistenceEquations} as well as Korteweg-type systems \cite{Antonelli2017GlobalEquations,Antonelli2020RelaxationEquation,Antonelli2022GlobalEquations}, and (ii) as a means of proving global existence \cite{Chauleur2020GlobalEquations} or relaxation to a steady state \cite{Bresch2022OnTerm,Su2022ExponentialForce}. Most works involving Korteweg systems use the notion of $\kappa$-entropy that was first demonstrated in \cite{Bresch2015Two-velocityViscosities}. Furthermore, even questions of non-uniqueness (and weak-strong uniqueness) of weak solutions have been addressed for the QHD-Poisson system with linear drag using convex integration \cite{Donatelli2015Well/IllProblems}.
	
	It is only at absolute zero temperature that superfluids can be well-approximated by the use of the NLS alone. For temperatures above zero and below about $2.17$K, we have a mixture of both fluids. In this article, we consider Pitaevskii's model \cite{Pitaevskii1959PhenomenologicalPoint} which couples the NLS and the NSE. The model was initially derived for a fully compressible normal fluid. While compressible fluids are more realistic in some scenarios, they are also much more challenging to both rigorously analyze and numerically simulate. \cite{Feireisl2004DynamicsFluids,Lions1996MathematicalMechanicsb} contain several classical results on the compressible NSE. On the other hand, the incompressible NSE (no density equation) is arguably the most studied nonlinear partial differential equation in mathematics (see \cite{Temam1977Navier-StokesAnalysis,Majda2002VorticityFlow,Robinson2016TheEquations} for classical results). In this article, we approximate the normal fluid to be incompressible, but the density persists, varying from point to point in the flow domain. What results is an incompressible, inhomogeneous flow: compressible NSE appended with the condition of divergence-free velocity. This model of fluids was first investigated by Kazhikov for local weak solutions when the initial density is bounded from below \cite{Kazhikov1974Fluid}, and vacuum states were allowed in an improvement by Kim \cite{Kim1987WeakDensity}. Further advances for weak solutions were made by Simon \cite{Simon1990NonhomogeneousPressure}, who in particular analyzed their continuity at $t=0$, and also proved the existence of global solutions in a less regular space. Meanwhile, Ladyzhenskaya and Solonnikov \cite{Ladyzhenskaya1978UniqueFluids} presented the case for strong solutions: With the density bounded from below, it is possible to construct local (global) unique solutions in 3D ~(2D). Furthermore, if the data is small enough, one obtains global-in-time unique solutions. Results in the same spirit were proven by Danchin for small perturbations from the stationary state in critical Besov spaces \cite{Danchin2003Density-dependentSpaces}. He further established the inviscid limit of the incompressible inhomogeneous NSE  in subcritical spaces \cite{Danchin2006TheFluids}. The local existence theorem by Ladyzhenskaya and Solonnikov was shown to be valid for non-negative densities as long as the initial data satisfied a compatibility criterion \cite{Choe2003StrongFluids}. This work by Choe and Kim has since spurred on several other results that utilize such compatibility conditions on the initial data.
	
	Given the immense interest in the NLS and NSE, the rigorous study of a coupled system should be a natural next step. Indeed, one such two-fluid model of superfluidity was analyzed by Antonelli and Marcati in \cite{Antonelli2015FiniteSuperfluidity}. The superfluid was described by the NLS, and the normal fluid by the compressible NSE. This is similar to the system considered in this article, save for two key differences. Firstly, their model did not permit any mass transfer between the two fluids (which allows for global-in-time solutions). As we shall discuss, this is the biggest roadblock in Pitaevskii's model and essentially defines the strategy used. Secondly, the momentum transfer in their model is unidirectional and linear, affecting only the superfluid phase (as opposed to the bidirectional and nonlinear nature of the coupling in this work).
	
	Thanks to the retarding interactions between the two phases, the NLS acquires a dissipative flavor and renders it parabolic. This lets us extract dissipative contributions to the energy estimates. To analyze the momentum equation of the NSE, we work with initial velocity in $H^1_{\text{d}}$. This yields appropriate regularity for the velocity, in order to adequately control the relaxation mechanism which contains quadratic terms in the velocity. Parting ways from \cite{Kim1987WeakDensity}, we begin with an initial density field that is bounded from below. This is necessary since the continuity equation is unusual and is not a homogeneous transport equation. Our primary goal is to avoid the occurrence of zero or negative densities at any time. To this end, we must limit the effect of inhomogeneity, which is the relaxation mechanism that allows for mass and momentum transfer between the two fluids. As a serendipitous by-product of this non-zero density field, we also obtain control of $\norm{\partial_t u}_{L^2_t L^2_x}$, which allows the use of compactness arguments to actually obtain strong continuity in time of the velocity field. 
	
	The crux of this work is to derive a priori estimates and carefully extract coercive terms that allow for norms to decay, while avoiding any derivatives on the density of the normal fluid. To engineer this decay, we include a linear drag term for the NSE. Additionally, we also present results for any polynomial-type nonlinearity in the NLS. We now mention the notation used in the article before describing the model and stating the results. 
	
	\subsection{Notation} \label{notation}
	We denote by $H^s(\T^2)$ the completion of $C^{\infty}(\T^2)$ under the Sobolev norm $H^s$, while we use $\Dot{H}^s(\T^2)$ when referring to the homogeneous Sobolev spaces. Consider a 2D vector-valued function $u\equiv (u_1,u_2)$, where $u_i\in C^{\infty}(\T^2)$ for $i=1,2$. The set of all divergence-free, smooth 2D functions $u$ defines $C^{\infty}_{\text{d}}(\T^2)$. Then, $H^s_{\text{d}}(\T^2)$ is the completion of $C^{\infty}_{\text{d}}(\T^2)$ under the $H^s$ norm.
	
	The $L^2$ inner product, denoted by $\langle \cdot,\cdot \rangle$, is sesquilinear (the first argument is complex conjugated, indicated by an overbar) to accommodate the complex nature of the Schr\"odinger equation, i.e., $\langle \psi,\varphi \rangle := \int_{\T^2} \Bar{\psi}\varphi \ dx$. Since the velocity and density are real-valued functions, we ignore the complex conjugation when they constitute the first argument of the inner product.
	
	We use the subscript $x$ to denote Banach spaces that are defined over $\T^2$. For instance, $L^p_x := L^p(\T^2)$ and $H^s_{\text{d},x} := H^s_{\text{d}}(\T^2)$. For spaces/norms over time, the subscript $t$ denotes the time interval in consideration, such as $L^p_t := L^p_{[0,t]}$. The Bochner spaces $L^p(0,T;X)$ and $C([0,T];X)$ have their usual meanings, as $L^p$ and continuous maps (respectively) from $[0,T]$ to a Banach space $X$. 
	
	We also use the notation $X\lesssim Y$ and $X\gtrsim Y$ to imply that there exists a positive constant $C$ such that $X\le CY$ and $CX\ge Y$, respectively. When appropriate, the dependence of the constant on various parameters shall be denoted using a subscript as $X\lesssim_{k_1,k_2} Y$ or $X\le C_{k_1,k_2}Y$. Throughout the article, $C$ is used to denote a (possibly large) constant that depends on the system parameters listed in ~\eqref{small data condition statement}, while $\kappa$ and $\veps$ are used to represent (small) positive numbers. The values of $C$, $\kappa$, and $\veps$ can vary across the different steps of calculations.

	\subsection{Organization of the paper}
	
	In Section ~\ref{mathematical model}, we present and discuss the mathematical model, along with statements of the main results. Several a priori estimates, at increasing levels of regularity, are derived in Section ~\ref{a priori estimates}. The construction of the semi-Galerkin scheme and the renormalization of the density are discussed in Section ~\ref{existence proof}.
	
	\section{Mathematical model and main results} \label{mathematical model}
	The superfluid phase is described by a complex wavefunction, whose dynamics are governed by the nonlinear Schr\"odinger equation (NLS), while the normal fluid is modeled using the compressible Navier-Stokes equations (NSE). In all generality, the full set of equations can be found in \cite[Section 2]{Pitaevskii1959PhenomenologicalPoint}. In what follows, we use a slightly simplified and modified version of the equations, arrived at by making the following assumptions.
	
	\begin{enumerate}
		\item We consider a general power-law nonlinearity for the NLS. This is done by choosing the internal energy density of the system to be $\frac{2\mu}{p+2}\abs{\psi}^{p+2}$, for $1\le p< \infty$ (see Remark \ref{restricting p>=1}). We also assume that the internal energy is independent of the density of the normal fluid.
		
		\item We work in the limit of a divergence-free normal fluid velocity. This means that the pressure is a Lagrange multiplier, rendering the equations of state and entropy unnecessary. Note that, due to the nature of the coupling between the two phases, the density of the normal fluid is not simply transported.
		
		\item A linear drag term has been included in the momentum equation to account for the lack of coercive estimates for the velocity.
		
		\item Planck's constant $(\hbar)$ and the mass of the Helium atom $(m)$ have both been set to unity for simplicity.
	\end{enumerate}
	
	We now state the equations used in this paper:
	\begin{align}
		\partial_t \psi + \lambda B\psi &= -\frac{1}{2i}\Delta\psi + \frac{\mu}{i}\lvert\psi\rvert^p \psi \tag{NLS} \label{NLS} \\
		B = \frac{1}{2}\left(-i\nabla - u \right)^2 + \mu \lvert \psi \rvert^p &= -\frac{1}{2}\Delta + \frac{1}{2}\lvert u \rvert^2 + iu\cdot\nabla + \mu \lvert \psi \rvert^p \tag{CPL} \label{coupling} \\
		\partial_t \rho + \nabla\cdot(\rho u) &= 2\lambda\Re(\Bar{\psi}B\psi) \tag{CON} \label{continuity} \\
		\partial_t (\rho u) + \nabla\cdot (\rho u \otimes u) + \nabla q - \nu \Delta u + \alpha\rho u &= \!\begin{multlined}[t]
			-2\lambda \Im(\nabla \Bar{\psi}B\psi) + \lambda\nabla \Im(\Bar{\psi}B\psi) \tag{NSE} \label{NSE} + \frac{\mu}{2}\nabla\lvert\psi\rvert^{p+2} 
		\end{multlined} \\
		\nabla\cdot u &= 0. \tag{DIV} \label{divergence-free}
	\end{align}
	Here, $\psi$ is the wavefunction describing the superfluid phase, while $\rho$, $u$, and $q$ are the density, velocity and pressure (respectively) of the normal fluid. The normal fluid has viscosity $\nu$ and drag coefficient $\alpha$, while $\mu$ (positive constant) is the strength of the scattering interactions within the superfluid\footnote{$\mu>0$ (resp. $\mu<0$) is called the defocusing (resp. focusing) NLS.}. This scattering nonlinearity has an exponent $p\in [1,\infty)$. Finally, $\lambda$ is a positive constant that indicates the coupling strength between the two phases. The coupling is denoted by the nonlinear operator $B$.
	
	The Schr\"odinger equation dictates the evolution of the wavefunction, generated via the action of the Hamiltonian (roughly, the energy) of the system. The coupling $B$ resembles the relative kinetic energy\footnote{There is also the nonlinear wavefunction term, so that the relaxation to equilibrium also depends on the potential energy of the superfluid.} between the two phases. This is evident upon recalling that the quantum mechanical momentum operator (in the position basis) is $-i\hbar\nabla$. The purpose of this coupling is to allow for mass/momentum transfer between the two phases as a means of relaxation or dissipation.
	
	These equations are supplemented with the initial conditions
	\begin{equation*} \tag{INI} \label{initial conditions}
		\psi(0,x) = \psi_0(x), \quad u(0,x) = u_0(x), \quad \rho(0,x) = \rho_0(x) \quad \text{a.e. } x\in\T^2 .
	\end{equation*}
	We use periodic boundary conditions, i.e., we are working on the two-dimensional torus $[0,1]^2$.

    \subsection{Weak solutions and the existence theorems} \label{weak solutions and the existence theorems}
	Having stated the model, the notion of weak solutions to ~\eqref{NLS}, ~\eqref{NSE}, ~\eqref{continuity}, and ~\eqref{divergence-free} (with initial conditions ~\eqref{initial conditions} and periodic boundary conditions), henceforth referred to as the \emph{Pitaevskii model}, is as follows.
	
	\begin{mydef}[Weak solutions\footnote{See Remark ~\ref{strong or weak solutions?}.}] \label{definition of weak solutions}
		For a given time $T>0$, a triplet $(\psi,u,\rho)$ is called a weak solution to the Pitaevskii model if the following conditions hold.
		\begin{enumerate} [(i)]
			\item 
			$\psi\in L^{\infty}([0,T];H^2(\T^2))\cap L^2(0,T;H^3(\T^2))$, $u\in L^{\infty}([0,T];H^1_{\text{d}}(\T^2))\cap L^2(0,T;H^2_{\text{d}}(\T^2))$, and $\rho~\in~L^{\infty}([0,T]\times\T^2)$, and
			
			\item $\psi$, $u$, and $\rho$ satisfy the governing equations in the sense of distributions, i.e., for all test functions $\varphi$, $\Phi$, and $\sigma$ described below, we have
			\begin{equation} \label{weak solution wavefunction}
				\begin{aligned}
					&-\int_0^T \int_{\T^2} \left( \psi\partial_t\Bar{\varphi} + \frac{1}{2i}\nabla\psi\cdot\nabla\Bar{\varphi} - \lambda\Bar{\varphi}B\psi - i\mu\Bar{\varphi}\lvert \psi \rvert^p\psi \right) dx \ dt \\ 
					&\quad = \int_{\T^2} \Big( \psi_0\Bar{\varphi}(0) - \psi(T)\Bar{\varphi}(T) \Big) dx,
				\end{aligned}
			\end{equation}
			with
			\begin{equation} \label{weak solution velocity}
				\begin{aligned}
					&-\int_0^T \int_{\T^2} \Big( \rho u\cdot \partial_t \Phi + \rho u\otimes u:\nabla\Phi - \nu\nabla u:\nabla\Phi - 2\lambda\Phi\cdot\Im(\nabla\Bar{\psi}B\psi) + \alpha\rho u\cdot\Phi \Big) dx \ dt \\ 
					&\quad = \int_{\T^2} \Big( \rho_0 u_0 \Phi(0) - \rho(T)u(T)\Phi(T) \Big) dx
				\end{aligned}
			\end{equation}
			and
			\begin{equation} \label{weak solution density}
				-\int_0^T \int_{\T^2} \Big( \rho \partial_t \sigma + \rho u\cdot\nabla\sigma + 2\lambda \sigma \Re(\Bar{\psi}B\psi) \Big) dx \ dt = \int_{\T^2} \Big( \rho_0 \sigma(0) - \rho(T)\sigma(T) \Big) dx,
			\end{equation}
			where $\psi_0 \in H^2(\T^2)$, $u_0 \in H^1_{\text{d}}(\T^2)$ and $\rho_0 \in L^{\infty}(\T^2)$ are the initial data. The test functions are:
			\begin{enumerate}
				\item a complex-valued scalar field $\varphi \in H^1(0,T;L^2(\T^2))\cap L^2(0,T;H^1(\T^2))$,
				\item a real-valued, divergence-free (2D) vector field $\Phi \in H^1(0,T;L^2_{\text{d}}(\T^2)) \cap L^2(0,T;H^1_{\text{d}}(\T^2))$, and
				\item a real-valued scalar field $\sigma \in H^1(0,T;L^2(\T^2))\cap L^2(0,T;H^1(\T^2))$.
			\end{enumerate}
		\end{enumerate}
	\end{mydef}
	
	\begin{rem} \label{gradient terms in NSE}
		We note that the last two terms in ~\eqref{NSE} are gradients, just like the pressure term, and thus vanish in the definition of the weak solution (since the test function is divergence-free). Henceforth, we absorb these two gradient terms into the pressure, relabeling the new pressure as $q$.
	\end{rem}
	
	We are now ready to state our main results.
	\begin{thm} [Global existence] \label{global existence}
		Fix any $p\in [1,4)$, and let $\psi_0 \in H^{\frac{5}{2}}(\T^2)$ with $u_0\in H^{1}_{\text{d}}(\T^2)$. Suppose $0< \mi\le\rho_0\le \Mi <\infty$ a.e.\ in $\T^2$. Then, there exist a global weak solution $(\psi,u,\rho)$ to the Pitaevskii model such that the density is bounded between $\mf\in (0,\mi)$ and $\Mf := \Mi + \mi - \mf$, if the initial data satisfy the smallness criterion
		\begin{equation} \label{small data condition statement}
			\norm{\psi_0}_{H^{\frac{5}{2}}_x} + \norm{u_0}_{H^1_x} + \norm{\psi_0}_{L^{p+2}_x} \le \veps_0(\lambda,\mu,\nu,\mi,\Mi,\mf,\alpha,p).
		\end{equation}
		Also, the solution has the regularity
		\begin{gather}
			\psi\in C([0,\infty);H^{\frac{5}{2}}(\T^2)) \cap L^2(0,\infty;H^{\frac{7}{2}}(\T^2)), \label{weak solution psi regularity} \\
			u\in C([0,\infty);H^1_{\text{d}}(\T^2)) \cap L^2(0,\infty;H^{2}_{\text{d}}(\T^2)), \label{weak solution u regularity} \\
			\rho\in L^{\infty}([0,\infty)\times \T^2)\cap C([0,\infty);L^r(\T^2)) , \label{weak solution rho regularity}
		\end{gather}
		for $1\le r<\infty$. Additionally, the solution also satisfies the energy equality
		\begin{equation} \label{energy equality for weak solutions}
			\begin{aligned}
				& \frac{1}{2}\norm{\sqrt{\rho(t)}u(t)}_{L^2_x}^2 + \frac{1}{2}\norm{\nabla\psi(t)}_{L^2_x}^2 + \frac{2\mu}{p+2}\norm{\psi(t)}_{L^{p+2}_x}^{p+2} \\
				&\quad + \nu\norm{\nabla u}_{L^2_{[0,t]}L^2_x}^2 + \alpha\norm{\sqrt{\rho} u}_{L^2_{[0,t]}L^2_x}^2 + 2\lambda\norm{B\psi}_{L^2_{[0,t]}L^2_x}^2 \\ 
				&= \frac{1}{2}\norm{\sqrt{\rho_0}u_0}_{L^2_x}^2 + \frac{1}{2}\norm{\nabla\psi_0}_{L^2_x}^2 + \frac{2\mu}{p+2}\norm{\psi_0}_{L^{p+2}_x}^{p+2} \quad a.e. \ t\in [0,\infty).
			\end{aligned}
		\end{equation}
	\end{thm}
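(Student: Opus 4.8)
\emph{Overall strategy.} The plan is to build the solution via a semi-Galerkin scheme, to close uniform a priori estimates using the smallness of the data through a continuity argument, and to pass to the limit by compactness. I would discretize only the momentum equation, projecting \eqref{NSE} onto the span of the first $n$ divergence-free eigenfunctions of the Stokes operator on $\T^2$, while solving \eqref{NLS} and the continuity equation \eqref{continuity} for the resulting finite-dimensional velocity $u_n$. The central structural observation is that, modulo the lower-order terms in the coupling, \eqref{NLS} is genuinely parabolic: since $-\tfrac{1}{2i}\Delta=\tfrac{i}{2}\Delta$ and $\lambda B$ contributes $-\tfrac{\lambda}{2}\Delta$, the principal part is $\partial_t\psi=\tfrac{\lambda+i}{2}\Delta\psi+(\text{lower order})$, a complex heat equation with dissipative real part $\tfrac{\lambda}{2}>0$. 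This yields both solvability of the approximate system and the smoothing that gains a derivative, carrying $H^{5/2}$ data into $L^2_tH^{7/2}$. The density is recovered from \eqref{continuity}, which by \eqref{divergence-free} reads $\partial_t\rho+u\cdot\nabla\rho=2\lambda\Re(\bar\psi B\psi)$ and is solved along the measure-preserving flow of $u_n$.

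\emph{A priori estimates.} I would build the estimates in a hierarchy of increasing regularity. The base layer is the energy identity from testing \eqref{NLS} with $B\psi$ and \eqref{NSE} with $u$; this reproduces \eqref{energy equality for weak solutions} and, crucially, exposes the three dissipative contributions $\nu\norm{\nabla u}_{L^2_x}^2$, $\alpha\norm{\sqrt\rho u}_{L^2_x}^2$, and $2\lambda\norm{B\psi}_{L^2_x}^2$. On the torus the Poincar\'e inequality turns these into genuine decay: the drag and viscosity control the full $H^1$-energy of $u$, while $\norm{B\psi}_{L^2_x}^2\gtrsim\norm{\Delta\psi}_{L^2_x}^2\gtrsim\norm{\nabla\psi}_{L^2_x}^2$ (up to lower-order terms) controls the gradient energy of $\psi$. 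Propagating this to $H^1$ for $u$ and $H^{5/2}$ for $\psi$, with the accompanying $L^2_tH^2$ and $L^2_tH^{7/2}$ gains, requires bounding the quadratic couplings $\tfrac12\abs{u}^2\psi$, $iu\cdot\nabla\psi$, the power term $\mu\abs{\psi}^p\psi$ (here the hypothesis $p<4$ is used to control $\mu\abs{\psi}^p\psi$ at the $H^{5/2}$ level via the two-dimensional Sobolev embeddings), and $\Im(\nabla\bar\psi B\psi)$ in the momentum balance. The lower bound $\rho\ge\mf$ is indispensable here: it makes $-\nu\Delta u$ coercive against $\partial_t(\rho u)$ and, as a by-product, yields the bound on $\norm{\partial_t u}_{L^2_tL^2_x}$ mentioned in the introduction. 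Under \eqref{small data condition statement} these nonlinear terms are absorbed into the dissipation, producing a differential inequality of the form $\tfrac{d}{dt}\mathcal E+c\,\mathcal E\le 0$ for a suitable high-order energy $\mathcal E$, hence exponential-in-time decay of all controlled norms.

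\emph{Density bounds---the main obstacle.} The heart of the argument, and the principal difficulty, is to keep $\rho$ strictly between $\mf$ and $\Mf$ for all time; unlike the homogeneous case, \eqref{continuity} carries the source $2\lambda\Re(\bar\psi B\psi)$, so $\rho$ is not merely transported. Integrating along characteristics gives $\abs{\rho(t,X(t))-\rho_0}\le 2\lambda\int_0^t\norm{\Re(\bar\psi B\psi)(s)}_{L^\infty_x}\,ds$, so, since the upward and downward budgets are both $\mi-\mf$, it suffices to prove
\begin{equation*}
2\lambda\int_0^\infty\norm{\bar\psi B\psi}_{L^\infty_x}\,dt\le \mi-\mf .
\end{equation*}
The worst contribution is $\tfrac12\bar\psi\Delta\psi$, and controlling its spatial sup-norm is precisely why the $H^{5/2}\cap L^2_tH^{7/2}$ regularity is needed: in two dimensions $H^{5/2}\hookrightarrow L^\infty_x$ and $H^{3/2}\hookrightarrow L^\infty_x$ give $\norm{\bar\psi\Delta\psi}_{L^\infty_x}\lesssim\norm{\psi}_{H^{5/2}_x}\norm{\psi}_{H^{7/2}_x}$, whose time integral is finite by Cauchy--Schwarz together with the exponential decay above, and can be forced below $(\mi-\mf)/2\lambda$ by taking $\veps_0$ small. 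I would formalize this as a continuity argument: on the maximal interval where $\mf\le\rho\le\Mf$, the a priori estimates and decay hold, which forces the displayed integral strictly below its threshold, so $\rho$ never reaches either endpoint and the interval must be all of $[0,\infty)$.

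\emph{Passage to the limit and energy equality.} With uniform bounds in hand, compactness completes the construction. Aubin--Lions--Simon, using the $L^\infty_tH^1\cap L^2_tH^2$ bound on $u_n$ and the $\partial_t u_n\in L^2_tL^2_x$ bound, gives strong convergence of $u_n$; the parabolic smoothing gives strong convergence of $\psi_n$; and $\rho_n$ converges weakly-$*$ in $L^\infty$ and, through the DiPerna--Lions renormalization of \eqref{continuity}, strongly in $C([0,\infty);L^r)$ for $r<\infty$. These are exactly the modes of convergence needed to pass to the limit in the nonlinear terms of \eqref{weak solution wavefunction}--\eqref{weak solution density}. Finally, because the limit enjoys the regularity \eqref{weak solution psi regularity}--\eqref{weak solution rho regularity}, $\psi$ and $u$ are admissible as their own test functions, so testing \eqref{NLS} with $B\psi$ and \eqref{NSE} with $u$ and integrating by parts rigorously yields the energy \emph{equality} \eqref{energy equality for weak solutions} rather than merely an inequality.
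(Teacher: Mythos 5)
There is a genuine gap, and it sits at the heart of your argument: the claimed exponential decay of all controlled norms is false. The dissipation produced by the coupling, $2\lambda\norm{B\psi}_{L^2_x}^2$, does \emph{not} control the superfluid mass $S(t)=\norm{\psi(t)}_{L^2_x}^2$: the operator $B$ has no spectral gap at $0$, and on $\T^2$ the equations do not preserve vanishing mean, so there is no Poincar\'e inequality available at the level of $\psi$ itself (your chain $\norm{B\psi}_{L^2_x}^2\gtrsim\norm{\Delta\psi}_{L^2_x}^2\gtrsim\norm{\nabla\psi}_{L^2_x}^2$ stops at the gradient and never reaches $\norm{\psi}_{L^2_x}^2$). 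The only coercivity acting on the mass is $\Re\langle\psi,B\psi\rangle\ge\mu\norm{\psi}_{L^{p+2}_x}^{p+2}\ge\mu\norm{\psi}_{L^2_x}^{p+2}$, a \emph{superlinear} power, which yields only the algebraic decay $S(t)\lesssim S_0\left(1+S_0^{p/2}t\right)^{-2/p}$. Since the mass feeds back into every higher-order estimate (e.g., through the potential-energy term $\norm{\psi}_{L^{p+2}_x}^{p+2}$ and through $\norm{\psi}_{L^\infty_x}\lesssim\norm{\psi}_{L^2_x}+\norm{\Delta\psi}_{L^2_x}$), no closed differential inequality of the form $\frac{d}{dt}\mathcal{E}+c\,\mathcal{E}\le0$ holds for any energy $\mathcal{E}$ that sees the mass; the best one gets is exponential decay \emph{plus} an algebraically decaying forcing term driven by $S(t)$.

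This is not a cosmetic issue, because your density-positivity argument relies precisely on the exponential decay to make $\int_0^\infty\norm{\bar\psi B\psi}_{L^\infty_x}\,dt$ finite and small. With only algebraic decay, the factor $\norm{\psi}_{L^2_{[0,T]}L^\infty_x}^2$ contains $\int_0^T S(t)\,dt$, which is uniformly bounded only for $p<2$, grows like $\log T$ for $p=2$, and like $T^{1-2/p}$ for $p>2$. Closing the density constraint for $2\le p<4$ requires an additional mechanism (in the paper: time-decaying dissipation bounds on dyadic intervals $[t,2t]$ and summation of dyadic blocks, which converges exactly when $\frac{2}{p}-\frac{1}{2}>0$). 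This is where the hypothesis $p<4$ genuinely enters --- not, as you assert, in controlling $\mu\abs{\psi}^p\psi$ at the $H^{5/2}$ level via Sobolev embeddings (that estimate works for every $p>0$). Indeed, a telltale sign of the gap is that your argument, as written, would prove global existence for \emph{all} $p>0$, contradicting the fact that for $p\ge4$ only almost-global existence is obtained (Theorem~\ref{almost global existence}). The remaining components of your proposal (semi-Galerkin construction, solving the density along characteristics, Aubin--Lions compactness, DiPerna--Lions renormalization, and passing to the limit in the energy identity) are aligned with the paper's scheme, but the quantitative backbone --- algebraic rather than exponential decay, and the resulting $p$-stratified analysis of the density lower bound --- is missing and cannot be repaired without it.
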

	
	For the case of higher-order nonlinearities, i.e., when $p\ge 4$, we obtain ``almost global" existence.
	
	\begin{thm} [Almost global existence] \label{almost global existence}
		In the case of $p=4$, the solution to the Pitaevskii model has the same regularity properties as in Theorem ~\ref{global existence}, except that their existence is guaranteed on $[0,T]$ such that $T \sim \exp (\veps^{-\frac{1}{2}})$, where $\veps$ is the size of the (sufficiently small) initial data.
		
		\noindent For $p>4$, the existence time scales polynomially with the size of the data, as $T\sim \veps^{-\frac{p}{p-4}}$. In both cases, these solutions also satisfy the energy equality on $[0,T]$.
	\end{thm}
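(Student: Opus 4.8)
\emph{Proof strategy.} The plan is to reuse the entire machinery developed for Theorem~\ref{global existence}: the same semi-Galerkin approximation, the same renormalization of the density via the (divergence-free) continuity equation~\eqref{continuity}, and the same compactness/limit-passage step producing a triplet $(\psi,u,\rho)$ with the stated regularity. Nothing in that construction uses $p<4$ \emph{per se}; the restriction $p<4$ enters only in the a priori estimates, precisely at the step where the nonlinear contributions are absorbed into the dissipation and shown to be summable in time. Consequently, the only thing I would redo is the bookkeeping of the $p$-dependence in those estimates, and I would replace the global-in-time closure by a continuation argument valid on a maximal interval $[0,T]$.

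\emph{The master inequality and where $p$ enters.} First I would package the higher-order quantities into
\[
Y(t):=\norm{\psi(t)}_{H^{5/2}_x}^2+\norm{u(t)}_{H^1_x}^2+E(t),
\]
where $E(t)$ denotes the instantaneous energy from~\eqref{energy equality for weak solutions}, with associated dissipation $D(t):=\norm{\psi}_{H^{7/2}_x}^2+\norm{u}_{H^2_x}^2+\norm{B\psi}_{L^2_x}^2$; the parabolic character of~\eqref{NLS} and the terms $\nu\norm{\nabla u}_{L^2_x}^2+\alpha\norm{\sqrt{\rho}u}_{L^2_x}^2$ from~\eqref{NSE} supply the coercivity. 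Testing~\eqref{NLS} at order $5/2$ (applying $(-\Delta)^{5/4}$) and~\eqref{NSE} at order $1$ exactly as before yields $\tfrac{d}{dt}Y+\kappa D\le C\,\mathcal N_p$, where $\mathcal N_p$ collects every occurrence of the power nonlinearity $\mu\abs{\psi}^p\psi$ — inside $B\psi$, inside the coupling source $-2\lambda\,\Im(\nabla\bar\psi\,B\psi)$, and inside the density source $2\lambda\,\Re(\bar\psi\,B\psi)$. By the Moser and Sobolev product estimates on $\T^2$, each such term carries a factor $\norm{\psi}_{L^\infty_x}^p$, and $\norm{\psi}_{L^\infty_x}$ must be controlled through the top-order norm $\norm{\psi}_{H^{5/2}_x}$. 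The decisive (and $p$-independent in spirit) constraint, however, is the density-positivity budget already central to Theorem~\ref{global existence}: keeping $m_f\le\rho\le \Mf$ requires $\int_0^T\norm{\Re(\bar\psi\,B\psi)}_{L^\infty_x}\,dt$ to stay below the fixed margin $\sim \mi-\mf$, and this integrand also scales like $\norm{\psi}_{L^\infty_x}^{p+2}$. For $p<4$ the resulting super-linear exponent is small enough that, after interpolating the surplus derivatives against $D$ and applying Young's inequality, one gets decay of $Y$ fast enough to make this integral \emph{finite and small for all time}; for $p\ge 4$ this is exactly what fails.

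\emph{The trichotomy and the lifespan.} I would then split into the two regimes. For $p>4$ (supercritical) the nonlinear term strictly dominates and a routine continuation argument on the set where $Y\sim Y(0)\sim\veps$ gives $\tfrac{d}{dt}Y\lesssim Y^{a(p)}$ with $a(p)=\tfrac{2p-4}{p-4}>1$; integrating, $Y$ stays controlled and the density budget is exhausted only at $T\sim \veps^{-p/(p-4)}$, which is the claimed lifespan. The borderline case $p=4$ is handled by replacing the crude bound $\norm{\psi}_{L^\infty_x}^4$ with a logarithmic (Brezis--Gallouet / Beale--Kato--Majda-type) interpolation inequality valid on $\T^2$,
\[
\norm{\psi}_{L^\infty_x}^2\lesssim \norm{\psi}_{H^1_x}^2\,\log\!\left(e+\frac{\norm{\psi}_{H^{5/2}_x}}{\norm{\psi}_{H^1_x}}\right),
\]
which turns the super-linear correction into a merely logarithmic one; the corresponding ODE comparison has an integrating factor that grows only logarithmically, so the density budget survives up to the exponentially long time $T\sim\exp(\veps^{-1/2})$ (the $\tfrac12$ power tracking the square root in the logarithmic inequality). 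In both regimes the energy equality~\eqref{energy equality for weak solutions}, now read on $[0,T]$, follows verbatim from the construction once the solution is in hand, since its derivation never used the time horizon.

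\emph{Main obstacle.} The hard part will be the critical case $p=4$: I expect the genuine difficulty to lie in pinning down the \emph{sharp} logarithmic dependence of $\norm{\psi}_{L^\infty_x}^4$ (and of the coupling and density-source terms built from it) so that the dissipation still \emph{almost} closes the estimate, and then in converting that marginal differential inequality into the precise exponential lifespan while simultaneously verifying that $\rho$ remains trapped in $[m_f,\Mf]$ on the whole of $[0,T]$. By comparison, the supercritical bound $T\sim\veps^{-p/(p-4)}$ should be comparatively mechanical once the Moser estimates for $\mathcal N_p$ and the density-budget bookkeeping are in place.
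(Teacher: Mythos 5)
Your overall skeleton is right---reuse the construction of Theorem~\ref{global existence} verbatim on $[0,T]$, and locate the loss of globality in the a priori estimates via the density-positivity budget---but the mechanisms you propose for extracting the two lifespans are not the ones that work, and I do not believe they can be made to work as stated. The first problem is your claim that the budget integrand $\Re(\bar\psi B\psi)$ ``scales like $\norm{\psi}_{L^\infty_x}^{p+2}$.'' It does not: $B\psi$ is dominated by $-\tfrac12\Delta\psi+\tfrac12\abs{u}^2\psi+iu\cdot\nabla\psi$, so the integrand is essentially quadratic in $(\psi,u)$ with two derivatives on $\psi$; only the single contribution $\mu\abs{\psi}^{p+2}$ has degree $p+2$. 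In the paper the budget is controlled through \eqref{constraint to choose existence time 3}, i.e.\ by $\bigl(\norm{\psi}_{L^2_{[0,T]}L^2_x}+\norm{\Delta\psi}_{L^2_{[0,T]}L^2_x}\bigr)\norm{B\psi}_{L^2_{[0,T]}H^{3/2}_x}$, and the exponent $p$ enters the lifespan \emph{only} through the decay rate of the superfluid mass, $S(t)\lesssim S_0(1+S_0^{p/2}t)^{-2/p}$ from \eqref{superfluid mass bound}, which is slower for larger $p$. Combining this with the time-localized dissipative bounds on dyadic blocks (\eqref{improved Y dissipative estimates}, \eqref{improved Bpsi sought-after bound}) gives $\int_t^{2t}\norm{\psi}_{L^\infty_x}\norm{B\psi}_{L^\infty_x}\lesssim \bigl((W_0+Z_0)^{1/2}+S_0^{p/4}\bigr)\,t^{-(2/p-1/2)}$; the dyadic sum converges iff $p<4$, while at $p=4$ the terms stop decaying and the sum over $[1,T]$ grows like $\log T$, and for $p>4$ like $T^{(p-4)/(2p)}$, as in \eqref{long-time loss of control}. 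Setting these against the fixed margin $\mi-\mf$ with data of size $\veps$ yields precisely $T\sim\exp(\veps^{-1/2})$ and $T\sim\veps^{-p/(p-4)}$. Crucially, the norms of $(\psi,u)$ never blow up in this regime: the obstruction is entirely the cumulative mass transfer into $\rho$.

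Because you mislocate the obstruction, both of your case-specific devices miss. For $p>4$ you posit a Riccati inequality $\tfrac{d}{dt}Y\lesssim Y^{a(p)}$ with $a(p)=\tfrac{2p-4}{p-4}$; this exponent is reverse-engineered from the desired lifespan (it blows up as $p\to4^+$ and tends to $2$ as $p\to\infty$) and nothing in the equations produces it---the natural super-linear terms generated by $\abs{\psi}^p\psi$ in Sobolev product estimates have degree $p+1$, which is what appears in the paper's polynomials $Q_1$, $\Tilde{Q}_2$. For $p=4$, Brezis--Gallouet addresses a borderline \emph{spatial} embedding, but the logarithm in the true lifespan comes from a divergent \emph{time} integration (in effect $\int_1^T t^{-1}\,dt$, since the dyadic contributions stop decaying exactly at $p=4$), so a log-interpolation inequality in $x$ does not engage the difficulty at all; likewise, the $\tfrac12$ in $\exp(\veps^{-1/2})$ tracks the square root of the (squared-norm) data size $W_0+Z_0+S_0^2$ appearing in the product of two $L^2_t$ norms, not a square root inside a logarithmic inequality. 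To repair your proof, replace both devices by the paper's two ingredients---(i) the algebraic mass-decay of Lemma~\ref{lem:superfluid mass estimate}, and (ii) the time-localized dissipation estimates \eqref{improved Y dissipative estimates} and \eqref{improved Bpsi sought-after bound}---and then perform the dyadic-in-time summation of the density budget; the short-time control from Section~\ref{case 2<p<4}, the construction, and the energy equality on $[0,T]$ then carry over exactly as you say.
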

	
	While deriving the a priori estimates, we have to distinguish between the cases $1\le p<2$, $p=2$, $2<p<4$, $p=4$, and $p>4$. This is due to the poor control we have on the superfluid mass. Given that we are on $\T^2$, and our equations do not preserve functions with vanishing mean, the $L^2$ norm becomes the limiting factor even in the decay of higher norms. In the case of the wavefunction, this corresponds to the mass of the superfluid. Similarly, for the velocity, we do not get coercive estimates from the viscosity term alone, at least at the level of the kinetic energy estimate. Thus, we introduce a linear drag term.

	\begin{rem} \label{restricting p>=1}
        Since the self-interaction term in ~\eqref{NLS} involves a discontinuity due to the complex magnitude, evaluating the $H^2$ norm as in~\eqref{psi^p psi in highest norm} requires $p\ge 1$. In particular, points of superfluid vacuum $(\psi=0)$ may lead to problems. As an illustration, consider $D^2 \left(\abs{f}^p f\right)$ for a real-valued function $f$, which can be regularized as $D^2 \left((f^2 + \veps)^{\frac{p}{2}} f\right)$. Upon differentiation, the most problematic term is $(f^2 + \veps)^{\frac{p}{2}-2} f^3 (Df)^2$. To be able to handle this term in the limit $\veps\rightarrow 0$, at the points where $f=0$, we require that $2\left(\frac{p}{2}-2 \right)+3=p-1\ge 0$. This argument can be easily extended to a complex-valued function.
    \end{rem}
	
	\begin{rem} \label{strong or weak solutions?}
		The regularity of the solutions seem to suggest that the wavefunction and velocity are strong solutions. Indeed this is true, as they are strongly continuous in their topologies. On the other hand, the density is truly a weak solution and is the reason for referring to the triplet as a weak solution. This low regularity of the density influences the nature of the calculations that are employed. 
	\end{rem} 
	
	The proofs of both Theorems ~\ref{global existence} and ~\ref{almost global existence} follow from detailed a priori estimates, and a semi-Galerkin scheme to construct the solutions. The a priori estimates only differ slightly for various ranges of the values of $p$, as will be illustrated. The general approach to the problem is motivated by that of \cite{Kim1987WeakDensity}, but we do not allow the density to vanish anywhere. This is because the presence of $u$ in the nonlinear coupling means we are required to control it in $L^{\infty}(\T^2)$ to prevent the formation of vacuum (and regions of negative density). Beginning from the usual mass and energy estimates, we derive a hierarchy of several energies for the wavefunction and velocity.

    \subsection{Significance of the results} \label{significance of the results}
    The holy grail of superfluid modeling is to find a unified description that works at all length scales, and rigorous validation of any proposed models is crucial to this process. The thrust of this paper is the analysis of Pitaevskii's description of superfluidity, the most important feature of which is to characterize the mass transfer between the two fluids. In the course of proving the main theorems, we quantify the conversion of superfluid into normal fluid (Lemma~\ref{lem:superfluid mass estimate}), confirming the interaction-induced relaxation mechanism. We establish the validity of the model in the limit $t\rightarrow\infty$ even as the superfluid mass decreases (polynomially) quickly. The transition in the behavior of the solutions, from global to almost-global, as the self-interactions are increased in strength, is in accordance with the decreasing mass decay. However, the threshold $p=4$ still begs for a physical explanation. Of the assumptions underlying our theorems, relaxing the demands of small data and positive normal fluid density would be important future advancements in the context of the Pitaevskii model.

    The rigorous analysis of superfluid models is a fairly new topic, and we expect for this work to pave the way for further results in this direction. Some questions of interest, particularly of consequence to physicists and engineers, are the issues of stability and compressibility. For example, in~\cite{Pitaevskii1959PhenomenologicalPoint}, Pitaevskii investigated the propagation of sound waves in superfluid Helium by studying the case when the superfluid has only small density gradients. It has to be noted that his derivation of the model accounted for the contributions to the internal energy of the system from both fluids. Thus, by utilizing appropriate self-interactions (for instance, non-local potentials, or including the normal fluid density), it would be important to test the model against experimental findings. A mathematical guarantee of the existence of solutions to the Pitaevskii model is essential to complement the efforts to numerically simulate such complicated systems~\cite{Brachet2023CouplingFlows}. It is worth mentioning that a better understanding of superfluidity could be revolutionary to most modern experiments in physics (including the Large Hadron Collider~\cite{Lebrun1994SuperfluidCERN,Rousset2018EvaluationD2}), and also to the fields of quantum computing~\cite{Hollister2021AApplications}, gravitational wave astronomy~\cite{Singh2017Detecting4He}, and dark matter~\cite{vonKrosigk2023DELight:Helium}. All of these use helium as a cryogen, often as a superfluid-normal fluid mixture due to the superfluid's excellent thermal conductivity~\cite{Vinen:808382}.

	\subsection{The strategy} \label{the strategy}
	The nonlinear coupling terms in ~\eqref{NLS} and ~\eqref{NSE} may be the most obvious differences between this model and other standard fluid dynamics models, but the source term in ~\eqref{continuity} is the most troublesome. The backbone of our approach towards proving global existence is ensuring a positive lower bound for the density at all times. This involves a meticulous handling of the a priori estimates so as to obtain coercive terms that lead to global-in-time bounds. Throughout the calculations, we ensure that the density norms are only in Lebesgue spaces: $\rho$ is not smooth enough to be differentiated (even weakly). Before we outline the strategy, we discuss some properties of the coupling operator $B$. Henceforth, we refer to the linear (in $\psi$) part of $B$ as $B_L$. Thus, 
	\begin{equation} \label{defining B_L}
		B_L = B - \mu\lvert\psi\rvert^p = -\frac{1}{2}\Delta + \frac{1}{2}\lvert u\rvert^2 + iu\cdot\nabla .
	\end{equation}
	
	\begin{lem} [$B_L$ is symmetric and $B$ is coercive] \label{coupling symmetric and non-negative} We have
		\begin{enumerate}
			\item $\langle \phi,B_L\psi \rangle = \langle B_L\phi,\psi \rangle \text{ for all } \phi,\psi\in H^1(\T^2)$,
			\item $\Re\langle \psi,B\psi \rangle \ge \mu\lVert\psi\rVert^{p+2}_{L^{p+2}_x} \text{ for all } \psi\in H^1(\T^2)$.
		\end{enumerate}
	\end{lem}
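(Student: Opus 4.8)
The plan is to recognize $B_L$ as (half of) the square of the gauge-covariant momentum operator $-i\nabla - u$, and to exploit that $u$ is real-valued and divergence-free. First I would record the operator identity $B_L = \frac{1}{2}(-i\nabla - u)^2$: expanding the square produces $-\Delta + i(\nabla\cdot u) + 2iu\cdot\nabla + |u|^2$, and the zeroth-order term $i(\nabla\cdot u)$ drops out by \eqref{divergence-free}, so after dividing by two this matches \eqref{defining B_L}. Throughout, all pairings are finite for $H^1$ data by the 2D embedding $H^1(\T^2)\hookrightarrow L^q$ ($q<\infty$) together with $u\in H^1_{\text{d}}$, and they are to be read through their integrated-by-parts forms, since $B_L$ nominally costs two derivatives.

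For the symmetry in (1), I would integrate by parts term by term, with no boundary contributions by periodicity. The Laplacian term becomes $\frac{1}{2}\int_{\T^2}\nabla\bar\phi\cdot\nabla\psi\,dx$, and the potential term $\frac{1}{2}\int_{\T^2}|u|^2\bar\phi\psi\,dx$ is symmetric because $|u|^2$ is real; both are manifestly Hermitian-symmetric in $(\phi,\psi)$. The only term that genuinely needs the divergence-free hypothesis is the first-order part: integrating by parts and discarding $\nabla\cdot u$ gives
\[
i\int_{\T^2}\bar\phi\,u\cdot\nabla\psi\,dx = -i\int_{\T^2}(u\cdot\nabla\bar\phi)\,\psi\,dx = \int_{\T^2}\overline{(iu\cdot\nabla\phi)}\,\psi\,dx,
\]
which is exactly the convection term of $\langle B_L\phi,\psi\rangle$. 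Adding the three pieces yields $\langle\phi,B_L\psi\rangle=\langle B_L\phi,\psi\rangle$. Equivalently, the factored identity collapses everything into $\langle\phi,B_L\psi\rangle=\frac{1}{2}\langle(-i\nabla-u)\phi,(-i\nabla-u)\psi\rangle$, whose symmetry is visible by inspection.

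For the coercivity in (2), I would split $B=B_L+\mu|\psi|^p$. The nonlinear piece contributes $\mu\langle\psi,|\psi|^p\psi\rangle=\mu\norm{\psi}_{L^{p+2}_x}^{p+2}\ge 0$ since $\mu>0$, while the factored identity makes the linear piece a perfect square, $\langle\psi,B_L\psi\rangle=\frac{1}{2}\norm{(-i\nabla-u)\psi}_{L^2_x}^2\ge 0$, a quantity that is automatically real by the symmetry just established. Taking real parts then gives
\[
\Re\langle\psi,B\psi\rangle=\tfrac{1}{2}\norm{(-i\nabla-u)\psi}_{L^2_x}^2+\mu\norm{\psi}_{L^{p+2}_x}^{p+2}\ge\mu\norm{\psi}_{L^{p+2}_x}^{p+2}.
\]
There is no serious obstacle here; the computation is routine. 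The only points deserving attention are that the divergence-free condition \eqref{divergence-free} is precisely what makes the first-order term symmetric (without it the claim fails), and that $\langle\psi,B_L\psi\rangle$ is real, so the lower bound survives after taking $\Re$.
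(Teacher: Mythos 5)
Your proof is correct, and part (1) is essentially identical to the paper's: term-by-term integration by parts, with the divergence-free condition used exactly where you use it, to make the first-order term $iu\cdot\nabla$ symmetric. For part (2) you take a slightly different, and arguably cleaner, route. The paper expands $\Re\langle\psi,B\psi\rangle$ into $\frac{1}{2}\norm{\nabla\psi}_{L^2_x}^2 + \frac{1}{2}\int_{\T^2}|u|^2|\psi|^2 - \Im\int_{\T^2} u\bar\psi\cdot\nabla\psi + \mu\norm{\psi}_{L^{p+2}_x}^{p+2}$ and then kills the cross term with H\"older and Young, which cancels it exactly against the first two terms; you instead invoke the factored form $B_L=\frac{1}{2}(-i\nabla-u)^2$ (already present in \eqref{coupling}) and the symmetry of the gauge-covariant momentum $-i\nabla-u$ to obtain the \emph{identity} $\Re\langle\psi,B\psi\rangle=\frac{1}{2}\norm{(-i\nabla-u)\psi}_{L^2_x}^2+\mu\norm{\psi}_{L^{p+2}_x}^{p+2}$. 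These are the same computation in disguise (Young's inequality $ab\le\frac{1}{2}a^2+\frac{1}{2}b^2$ is just positivity of the square), but your version buys a little more: an exact equality exhibiting the discarded term as the relative kinetic energy, and positivity of $\langle\psi,B_L\psi\rangle$ with no inequality losses. You also handle two points the paper leaves implicit, namely that the $H^1\times H^1$ pairing must be read in its integrated-by-parts form and that $\langle\psi,B_L\psi\rangle$ is real, which is what makes taking $\Re$ harmless.
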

	\proof  Both calculations follow using integration by parts.
	\begin{enumerate}
		\item By ~\eqref{defining B_L} and incompressibility of $u$, we have
		\begin{align*}
			\langle \phi,B_L\psi \rangle &= \int_{\T^2} \Bar{\phi} B_L\psi = \int_{\T^2} \Bar{\phi} \left[ -\frac{1}{2}\Delta\psi + \frac{1}{2}\lvert u\rvert^2\psi + iu\cdot\nabla\psi \right] \\
			&= \int_{\T^2} \left[ -\frac{1}{2}\Delta\Bar{\phi} + \frac{1}{2}\lvert u\rvert^2\Bar{\phi} - iu\cdot\nabla\Bar{\phi} \right]\psi = \int_{\T^2} (\overline{B_L\phi})\psi = \langle B_L\phi,\psi \rangle .
		\end{align*}
		\item Similarly,
		\begin{align*}
			\Re \langle \psi,B\psi \rangle &= \Re\int_{\T^2} \Bar{\psi} B\psi = \Re\int_{\T^2} \Bar{\psi} \left[ -\frac{1}{2}\Delta\psi + \frac{1}{2}\lvert u\rvert^2\psi + iu\cdot\nabla\psi + \mu\lvert\psi\rvert^p\psi \right] \\
			&= \frac{1}{2}\lVert\nabla\psi\rVert^2_{L^2_x} + \frac{1}{2}\int_{\T^2}\lvert u\rvert^2 \lvert\psi\rvert^2 - \Im\int_{\T^2} u\Bar{\psi}\cdot\nabla\psi +  \mu\lVert\psi\rVert^{p+2}_{L^{p+2}_x} \\
			&\ge \mu\lVert\psi\rVert^{p+2}_{L^{p+2}_x} .
		\end{align*}
		In the last inequality, we used H\"older's and Young's inequalities to cancel the third term with the first two terms:
		\begin{gather*}
			-\Im\int_{\T^2} u\Bar{\psi}\cdot\nabla\psi \ge -\frac{1}{2}\lVert u\psi \rVert^2_{L^2_x} - \frac{1}{2}\lVert\nabla\psi\rVert^2_{L^2_x} .
		\end{gather*}
		\qed
	\end{enumerate}
	
	\begin{rem}
		Given that $B$ provides a relaxation mechanism, it is tempting to treat it, or at least its linear part $B_L$, as a dissipative second-order elliptic operator whose eigenfunctions can be used as a basis for the semi-Galerkin scheme. Even though $B_L$ is symmetric and has a non-negative real part, this cannot work since it has time-dependent coefficients, and so its eigenvalues and eigenfunctions also depend on time. Moreover, $B_L$ does not have a spectral gap at $0$. Its eigenvalues are not known to be bounded from below by a positive number.
	\end{rem}
	
	Thus, by integrating ~\eqref{continuity} over $\T^2$, the advective term vanishes and using Lemma ~\ref{coupling symmetric and non-negative}, we have
	\begin{equation}
		\frac{d}{dt} \int_{\T^2}\rho \ dx = 2\lambda \Re\int_{\T^2}\Bar{\psi}B\psi \ge 0 .
	\end{equation} 
	This implies that the overall mass of the normal fluid does not decrease with time. Put differently, the coupling causes superfluid to be converted into normal fluid, \textit{on average}. However, the RHS of ~\eqref{continuity} need not be non-negative pointwise in $\T^2$. So it is not inconceivable that the density of the normal fluid may locally vanish, or even take negative values! To prevent physically unrealistic density fields, and because our estimates require a strictly positive density, we fix a positive lower bound for $\rho$. Based on this, we define our existence time $T$, so that $\rho$ does not drop below the lower bound until time $T$. Our goal is to show that this lower bound can be maintained for arbitrarily long, provided we begin from sufficiently small data.  
	
	\begin{mydef} [Existence time] \label{existence time definition}
		Start with an initial density field $0<\mi \le\rho_0(x)\le \Mi <\infty$. Given $0<\mf<\mi$, we define the existence time for the solution as
		\begin{equation} \label{abstract definition existence time}
			T_* := \inf \{t>0 \ \lvert \ \inf_{\T^2}\rho(t,x) = \mf\} .
		\end{equation}
	\end{mydef} 
	A \textit{formal} solution to the continuity equation can be written using the method of characteristics. Let $X_{\alpha}(t)$ be the characteristic starting at $\alpha\in\T^2$. To wit, the characteristic solves the differential equation
	\begin{equation} \label{characteristics}
		\begin{aligned}
			\frac{d}{dt}X_{y}(t) &= u(t,X_y(t)) \\
			X_y(0) &= y \in\T^2 .
		\end{aligned}
	\end{equation} 
	Here, $u$ is the velocity of the normal fluid. So, along such characteristics,
	\begin{equation} \label{density along characteristic}
		\rho(t,X_y(t)) = \rho_0(y) + 2\lambda\Re\int_0^t \Bar{\psi}B\psi (\tau,X_y(\tau)) \ d\tau .
	\end{equation} 
	From ~\eqref{abstract definition existence time} and ~\eqref{density along characteristic}, it is clear that a sufficient condition to ensure the density is bounded from below by $\mf$ is
	\begin{equation} \label{constraint for positive density}
		2\lambda\int_0^{T} \lvert\Bar{\psi}B\psi\rvert (\tau,X_y(\tau)) \ d\tau \le \mi-\mf ,
	\end{equation}
	for all $T\le T_*$. This can in turn be ensured through the sufficiency
	\begin{equation} \label{constraint to choose existence time}
		2\lambda \lVert\psi\rVert_{L^2_{[0,T]}L^{\infty}_x} \lVert B\psi\rVert_{L^2_{[0,T]}L^{\infty}_x} \le \mi-\mf .
	\end{equation}
	So, we are looking to show that ~\eqref{constraint to choose existence time} $-$ actually a stronger version of it $-$ holds irrespective of $T$, so that we can conclude that the density is always greater than $\mf$. This is achieved by selecting small enough data, and allows us to deduce the global existence of solutions. Since $B\psi$ involves a second-order derivative, its $L^{\infty}_x$ boundedness leads us to high-regularity spaces. The momentum equation ~\eqref{NSE} is used to estimate $\norm{u}_{L^2_t H^2_x}$ and $\norm{u}_{L^2_t H^1_x}$, which are useful in handling parts of $\norm{B\psi}_{L^{\infty}_x}$. As a by-product of these calculations, we are also able to bound $\norm{\partial_t u}_{L^2_t L^2_x}$, which plays a part in the compactness arguments for the strong time-continuity of $u$. The Schr\"odinger equation ~\eqref{NLS} is used to derive increasingly higher-order a priori estimates of $\psi$. In all these calculations, we work with density that is only in $L^{\infty}_x$.

	\section{A priori estimates} \label{a priori estimates}
	
	Throughout this section, we derive the required a priori estimates, using formal calculations. We assume the wavefunction and velocity are smooth functions and that the density is bounded from below by $\mf>0$ in $[0,T]$. Here, $T$ is any time less than the local existence time $T_*$, and is extended to global existence in Section ~\ref{ensuring positive density}. 
	
	\subsection{Superfluid mass estimate}
	
	\begin{lem}[Algebraic decay rate of superfluid mass] \label{lem:superfluid mass estimate}
		The mass of the superfluid 
		\begin{equation*}
			S(t) := \norm{\psi(t)}_{L^2_x}^2
		\end{equation*} decays algebraically in time as $(1+t)^{-\frac{2}{p}}$, and is bounded from above by the initial mass ~$S_0$.
	\end{lem}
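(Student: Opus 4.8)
The plan is to obtain a closed differential inequality for $S(t)=\norm{\psi(t)}_{L^2_x}^2$ by pairing \eqref{NLS} with $\psi$ in $L^2_x$ and taking real parts. Rewriting the right-hand side of \eqref{NLS} as $\frac{i}{2}\Delta\psi-i\mu\abs{\psi}^p\psi$ and using the sesquilinearity of $\langle\cdot,\cdot\rangle$, one has $\tfrac12\frac{d}{dt}S=\Re\langle\psi,\partial_t\psi\rangle$. The key structural observation is that the dispersive term $\frac{i}{2}\langle\psi,\Delta\psi\rangle=-\frac{i}{2}\norm{\nabla\psi}_{L^2_x}^2$ and the nonlinear term $-i\mu\langle\psi,\abs{\psi}^p\psi\rangle=-i\mu\norm{\psi}_{L^{p+2}_x}^{p+2}$ are \emph{both purely imaginary}, hence drop out under $\Re$. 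This reflects the fact that neither the Schr\"odinger flow nor the gauge nonlinearity alters the mass; all of the dissipation enters through the coupling. What survives is
\[
\tfrac12\frac{d}{dt}S=-\lambda\,\Re\langle\psi,B\psi\rangle,
\]
and the coercivity estimate in Lemma \ref{coupling symmetric and non-negative}(2) then yields $\frac{d}{dt}S\le-2\lambda\mu\,\norm{\psi}_{L^{p+2}_x}^{p+2}$.

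To close this into an inequality involving only $S$, I would exploit that $\T^2$ has unit measure: since $p+2>2$, H\"older's inequality gives $\norm{\psi}_{L^2_x}\le\norm{\psi}_{L^{p+2}_x}$, so $\norm{\psi}_{L^{p+2}_x}^{p+2}\ge S^{(p+2)/2}$. This produces the autonomous Bernoulli-type inequality $\frac{d}{dt}S\le-2\lambda\mu\,S^{(p+2)/2}$. Because $p>0$, the exponent $(p+2)/2$ exceeds $1$; examining $\frac{d}{dt}S^{-p/2}=-\frac{p}{2}S^{-(p+2)/2}\frac{dS}{dt}\ge\lambda\mu p$ and integrating from $0$ to $t$ gives $S(t)^{-p/2}\ge S_0^{-p/2}+\lambda\mu p\,t$ (writing $S_0:=S(0)=\norm{\psi_0}_{L^2_x}^2$), that is,
\[
S(t)\le\bigl(S_0^{-p/2}+\lambda\mu p\,t\bigr)^{-2/p}\lesssim(1+t)^{-\frac{2}{p}},
\]
the last bound following from $S_0^{-p/2}+\lambda\mu p\,t\ge\min\{S_0^{-p/2},\lambda\mu p\}(1+t)$. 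This is exactly the claimed algebraic rate, and the upper bound $S(t)\le S_0$ is immediate since the first displayed identity has a non-positive right-hand side, so $S$ is non-increasing.

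There is no serious obstacle here; the argument is a short energy computation. The two points requiring care are (i) the cancellation of the dispersive and nonlinear contributions, which hinges on the conjugation convention for $\langle\cdot,\cdot\rangle$, and (ii) the interpolation step, where the \emph{compactness} of the torus (unit measure) is essential $-$ on $\R^2$ the bound $\norm{\psi}_{L^2_x}\le\norm{\psi}_{L^{p+2}_x}$ fails, one would instead need a Gagliardo--Nirenberg/Nash-type estimate involving $\norm{\nabla\psi}_{L^2_x}$, and the decay rate would change accordingly. It bears emphasis that, unlike the standard conservative NLS, here the $L^2$ mass genuinely decays, with both the monotonicity and the rate driven entirely by the coercivity of the coupling operator $B$.
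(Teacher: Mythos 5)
Your proof is correct and follows essentially the same route as the paper's: pairing \eqref{NLS} with $\Bar{\psi}$ and taking real parts, invoking the coercivity of $B$ from Lemma \ref{coupling symmetric and non-negative}, using H\"older's inequality on the unit-volume torus to close the inequality in $S$, and integrating the resulting Bernoulli-type inequality. Your explicit bound $S(t)\le\bigl(S_0^{-p/2}+\lambda\mu p\,t\bigr)^{-2/p}$ is exactly the paper's estimate \eqref{superfluid mass bound}.
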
 
	
	\begin{proof}
		Multiplying ~\eqref{NLS} by $\Bar{\psi}$, taking the real part, and integrating over $\T^2$ gives
		\begin{equation} \label{superfluid mass estimate initial}
			\frac{1}{2}\frac{d}{dt}\norm{\psi}_{L^2_x}^2 + \lambda \int_{\T^2}\Re (\Bar{\psi}B\psi) = 0.
		\end{equation} 
		The Laplacian term on the RHS of ~\eqref{NLS} vanishes using integration by parts. By Lemma ~\ref{coupling symmetric and non-negative}, the second term in ~\eqref{superfluid mass estimate initial} is bounded from below by the $L^{p+2}_x$ norm, so we get
		\begin{equation} \label{superfluid mass equation coercive}
			\frac{1}{2}\frac{d}{dt} \norm{\psi}_{L^2_x}^2 + \lambda \mu \norm{\psi}_{L^{p+2}_x}^{p+2} \le 0.
		\end{equation}
		Since we are in a domain of unit volume, H\"older's inequality leads to
		\begin{equation} \label{superfluid mass estimate}
			\frac{d}{dt} \frac{1}{2}\norm{\psi}_{L^2_x}^2 + \lambda \mu \norm{\psi}_{L^2_x}^{p+2} \le 0.
		\end{equation}
		It is now easy to conclude that the mass of superfluid (using the quantum mechanical interpretation of the wavefunction) decays algebraically in time. Namely,
		\begin{equation} \label{superfluid mass bound}
			S(t) = \norm{\psi(t)}_{L^2_x}^2 \lesssim \frac{S_0}{\left(1+S_0^{\frac{p}{2}} t\right)^{\frac{2}{p}}} \quad , \quad t \in [0,T],
		\end{equation} 
		where $S_0 := \norm{\psi_0}_{L^2_x}^2$ is the initial mass of the superfluid.
	\end{proof}

	\subsection{Energy estimate} \label{energy estimate}
	In this subsection (Section ~\ref{energy estimate}), we derive the governing equations for the energy 
	\begin{equation} \label{energy definition}
		E(t) := \frac{1}{2}\norm{\sqrt{\rho(t)}u(t)}_{L^2_x}^2 + \frac{1}{2}\norm{\nabla\psi(t)}_{L^2_x}^2 + \frac{2\mu}{p+2}\norm{\psi(t)}_{L^{p+2}_x}^{p+2}.
	\end{equation}
	In Section ~\ref{higher order estimate}, we work with a higher-order energy $X(t)$, combining it with $E(t)$ in Section ~\ref{gronwall inequality step for higher order estimate}. We begin by acting with the gradient operator on ~\eqref{NLS}, multiplying by $\nabla\Bar{\psi}$, and taking the real part. This gives
	\begin{equation*} \label{energy estimate first step}
		\frac{1}{2}\partial_t \abs{\nabla\psi}^2 = -\frac{1}{2}\Im(\nabla\Bar{\psi}\cdot\nabla\Delta\psi) - \lambda\Re(\nabla\Bar{\psi}\cdot\nabla(B\psi)) - \mu\nabla\abs{\psi}^p\cdot\Im(\Bar{\psi}\nabla\psi).
	\end{equation*} 
	Integrating over $\T^2$, we observe that the first term on the RHS vanishes upon integration by parts due to the periodic boundary conditions. The second term on the RHS is similarly integrated by parts to yield
	\begin{equation} \label{nabla psi norm equation}
		\frac{1}{2}\frac{d}{dt}\norm{\nabla\psi}_{L^2_x}^2 = \lambda\Re\int_{\T^2}\Delta\Bar{\psi}B\psi - \mu\Im\int_{\T^2}\nabla\abs{\psi}^p\cdot\Bar{\psi}\nabla\psi.
	\end{equation} 
	Now, we rewrite the first term on the RHS by expressing the Laplacian in terms of the operator $B$, giving us a dissipative contribution to the energy estimate. Namely,
	\begin{equation} \label{exchanging derivatives for B}
		\begin{aligned}
			\lambda\Re\int_{\T^2}\Delta\Bar{\psi}B\psi &= -2\lambda\Re\int_{\T^2} \left( \overline{B\psi} - \frac{1}{2}\abs{u}^2\Bar{\psi} + iu\cdot\nabla\Bar{\psi} - \mu\abs{\psi}^p\Bar{\psi} \right)B\psi \\
			&= -2\lambda\norm{B\psi}_{L^2_x}^2 + \lambda\int_{\T^2}\abs{u}^2\Re(\Bar{\psi}B\psi) + 2\lambda\int_{\T^2}u\cdot\Im(\nabla\Bar{\psi}B\psi) \\ 
			&\qquad + 2\mu\lambda\int_{\T^2}\abs{\psi}^p\Re(\Bar{\psi}B\psi).
		\end{aligned}
	\end{equation}
	We also have to account for the potential (self-interaction) energy of the wavefunction. To obtain this, we multiply ~\eqref{NLS} by $2\Bar{\psi}$ and take the real part to obtain
	\begin{equation*}
		\partial_t \abs{\psi}^2 + \nabla\cdot\Im(\Bar{\psi}\nabla\psi) = -2\lambda\Re(\Bar{\psi}B\psi).
	\end{equation*} 
	Multiplying the above equation with $\mu\abs{\psi}^p$ and integrating over $\T^2$ leads to
	\begin{equation} \label{cubic nonlinearity potential energy}
		\frac{2\mu}{p+2}\frac{d}{dt}\norm{\psi}_{L^{p+2}_x}^{p+2} - \mu\int_{\T^2}\Im(\Bar{\psi}\nabla\psi)\cdot\nabla\abs{\psi}^p = -2\mu\lambda\int_{\T^2}\abs{\psi}^p\Re(\Bar{\psi}B\psi) .
	\end{equation} 
	Combining ~\eqref{nabla psi norm equation}, ~\eqref{exchanging derivatives for B}, and ~\eqref{cubic nonlinearity potential energy} gives the energy equation for the superfluid,
	\begin{equation} \label{superfluid energy equation}
		\frac{d}{dt}\left( \frac{1}{2}\norm{\nabla\psi}_{L^2_x}^2 + \frac{2\mu}{p+2}\norm{\psi}_{L^{p+2}_x}^{p+2} \right) + 2\lambda\norm{B\psi}_{L^2_x}^2 = \lambda\int_{\T^2}\abs{u}^2\Re(\Bar{\psi}B\psi) + 2\lambda\int_{\T^2}u\cdot\Im(\nabla\Bar{\psi}B\psi).
	\end{equation} 
	The terms on the RHS are canceled once we include the energy of the normal fluid. We first rewrite ~\eqref{NSE} in the \textit{non-conservative form}, and apply the Leray projector (see Remark ~\ref{gradient terms in NSE}) to get
	\begin{equation}  \label{NSE'}
		\Leray\left(\rho\partial_t u + \rho u\cdot\nabla u - \nu \Delta u + \alpha\rho u\right) = \Leray\left(-2\lambda \Im(\nabla \Bar{\psi}B\psi) - 2\lambda u\Re(\Bar{\psi}B\psi) \right). \tag{NSE'}
	\end{equation} 
	Here, $\Leray$ is the Leray projector, which projects a Hilbert space into its divergence-free subspace, thus removing any purely gradient terms. We also apply the Leray projector to ~\eqref{NSE} to obtain
	\begin{equation}  \label{NSE-L}
		\Leray\left(\partial_t (\rho u) + \nabla\cdot(\rho u\otimes u) - \nu \Delta u + \alpha\rho u \right) = \Leray\left(-2\lambda \Im(\nabla \Bar{\psi}B\psi) \right). \tag{NSE-L}
	\end{equation} 
	Taking the inner product of both ~\eqref{NSE'} and ~\eqref{NSE-L} with $u$, using incompressibility, and adding them, we arrive at the energy equation for the normal fluid,
	\begin{equation} \label{normal fluid energy equation}
		\frac{1}{2}\frac{d}{dt}\norm{\sqrt{\rho}u}_{L^2_x}^2 + \nu\norm{\nabla u}_{L^2_x}^2 + \alpha\norm{\sqrt{\rho} u}_{L^2_x}^2 = - 2\lambda\int_{\T^2}u\cdot\Im(\nabla \Bar{\psi}B\psi) - \lambda\int_{\T^2}\abs{u}^2\Re(\Bar{\psi}B\psi).
	\end{equation}
	Therefore, by adding ~\eqref{superfluid energy equation} and ~\eqref{normal fluid energy equation}, we obtain the energy equation
	\begin{equation} \label{Energy equation}
		\frac{dE}{dt} + \nu\norm{\nabla u}_{L^2_x}^2 + \alpha\norm{\sqrt{\rho} u}_{L^2_x}^2 + 2\lambda\norm{B\psi}_{L^2_x}^2 = 0.
	\end{equation} 
	Thus, the energy is bounded from above as
	\begin{equation} \label{energy bound E0}
		E(t) + \nu\norm{\nabla u}_{L^2_{[0,T]}L^2_x}^2 + \alpha\norm{\sqrt{\rho}u}_{L^2_{[0,T]}L^2_x}^2 + 2\lambda\norm{B\psi}_{L^2_{[0,T]}L^2_x}^2 \\ = E_0 \quad , \quad t\in [0,T],
	\end{equation}  
	with
	\begin{equation} \label{E0 definition}
		E_0 := \frac{1}{2}\norm{\sqrt{\rho_0}u_0}_{L^2_x}^2 + \frac{1}{2}\norm{\nabla\psi_0}_{L^2_x}^2 + \frac{2\mu}{p+2}\norm{\psi_0}_{L^{p+2}_x}^{p+2}
	\end{equation}
	denoting the initial energy of the system.
	Next, we wish to show that the energy actually decays algebraically in time, under a certain smallness condition on the initial data. First, note that
	\begin{align*}
		\int_{\T^2} \abs{\psi}^p \Re (\overline{\psi}B_L\psi)
		&= \Re\int_{\T^2} \abs{\psi}^p \overline{\psi}\left[-\frac{1}{2}\Delta\psi + \frac{1}{2}\abs{u}^2 \psi + iu\cdot\nabla\psi \right] \\
		&= \frac{1}{2}\int_{\T^2} \abs{\psi}^p \abs{\nabla\psi}^2 + \frac{1}{4} \int_{\T^2} \nabla \abs{\psi}^p \cdot \nabla\abs{\psi}^2 + \frac{1}{2}\int_{\T^2}\abs{u}^2\abs{\psi}^{p+2} \\
		&\quad - \int_{\T^2} \abs{\psi}^p u\cdot \Im(\overline{\psi}\nabla\psi) \\
		&= \frac{1}{2}\int_{\T^2} \abs{\psi}^p \abs{\nabla\psi}^2 + \frac{2p}{(p+2)^2} \norm{\nabla \left( \abs{\psi}^{\frac{p}{2}+1} \right)}_{L^2_x}^2 + \frac{1}{2}\int_{\T^2}\abs{u}^2\abs{\psi}^{p+2} \\
		&\quad - \int_{\T^2} \abs{\psi}^p u\cdot \Im(\overline{\psi}\nabla\psi) \\
		&\gtrsim \norm{\nabla \left( \abs{\psi}^{\frac{p}{2}+1} \right)}_{L^2_x}^2 ,
	\end{align*}  
	where we used an argument similar to the one from the proof of Lemma ~\ref{coupling symmetric and non-negative} to get the last inequality. We now use ~\eqref{defining B_L} to see that
	\begin{equation} \label{B psi is bounded from below by D^2 psi}
		\begin{aligned}
			\norm{B\psi}_{L^2_x}^2 &= \norm{B_L\psi}_{L^2_x}^2 + \mu^2\norm{\psi}_{L^{2p+2}_x}^{2p+2} + 2\mu \int_{\T^2} \abs{\psi}^p \Re (\overline{\psi}B_L\psi) \\
			&\ge \norm{B_L\psi}_{L^2_x}^2 + \mu^2\norm{\psi}_{L^{2p+2}_x}^{2p+2} + \frac{1}{C} \norm{\nabla \left( \abs{\psi}^{\frac{p}{2}+1} \right)}_{L^2_x}^2 \\
			&\ge \frac{1}{8}\norm{D^2\psi}_{L^2_x}^2 - C\norm{\abs{u}^2\psi}_{L^2_x}^2 - C\norm{u\cdot\nabla\psi}_{L^2_x}^2 + \frac{1}{C}\norm{\psi}_{L^{2p+2}_x}^{2p+2} + \frac{1}{C} \norm{\nabla \left( \abs{\psi}^{\frac{p}{2}+1} \right)}_{L^2_x}^2 .
		\end{aligned}
	\end{equation}  
	Combining ~\eqref{Energy equation} and ~\eqref{B psi is bounded from below by D^2 psi}, we get
	\begin{equation} \label{energy inequation 1}
		\begin{aligned}
			&\frac{dE}{dt} + \nu\norm{\nabla u}_{L^2_x}^2 + \alpha\norm{\sqrt{\rho} u}_{L^2_x}^2 + \frac{\lambda}{4}\norm{D^2\psi}_{L^2_x}^2 + \frac{1}{C}\norm{\psi}_{L^{2p+2}_x}^{2p+2} + \frac{1}{C}\norm{\nabla \left( \abs{\psi}^{\frac{p}{2}+1} \right)}_{L^2_x}^2 \\ 
			&\lesssim \norm{\abs{u}^2\psi}_{L^2_x}^2 + \norm{u\cdot\nabla\psi}_{L^2_x}^2 =: I_1 + I_2.
		\end{aligned}
	\end{equation}
	We then bound the first term on the RHS using H\"older inequality and Gagliardo-Nirenberg (GN) interpolation as
	\begin{equation} \label{I1 bound}
		I_1 \lesssim \norm{u}_{L^6_x}^4\norm{\psi}_{L^6_x}^2 \lesssim \norm{u}_{L^2_x}^{\frac{4}{3}}\norm{u}_{H^1_x}^{\frac{8}{3}}\norm{\psi}_{H^1_x}^2.
	\end{equation}
	For the second term in ~\eqref{energy inequation 1}, we interpolate the $L^3_x$ norm, while also applying the H\"older, Poincar\'e, and Young inequalities, as well as the GN interpolation inequality, to get
	\begin{equation} \label{I2 bound}
		\begin{aligned}
			I_2 &\lesssim \norm{u}_{L^6_x}^2\norm{\nabla\psi}_{L^3_x}^2 \lesssim \norm{u}_{L^2_x}^{\frac{2}{3}}\norm{u}_{H^1_x}^{\frac{4}{3}}\norm{\nabla\psi}_{L^2_x}\norm{\nabla\psi}_{L^6_x} \\
			&\lesssim \norm{u}_{L^2_x}^{\frac{2}{3}}\norm{u}_{H^1_x}^{\frac{4}{3}}\norm{\nabla\psi}_{L^2_x}^{\frac{4}{3}} \norm{D^2\psi}_{L^2_x}^{\frac{2}{3}} \le C_{\kappa}\norm{u}_{L^2_x}\norm{u}_{H^1_x}^2\norm{\nabla\psi}_{L^2_x}^2 + \kappa\norm{D^2\psi}_{L^2_x}^2 \\
			&\le C_{\kappa} E_0^{\frac{3}{2}} \norm{\sqrt{\rho}u}_{L^2_x}^2 + C_{\kappa} E_0^{\frac{3}{2}} \norm{\nabla u}_{L^2_x}^2 + \kappa\norm{D^2\psi}_{L^2_x}^2. 
		\end{aligned}
	\end{equation}
	For sufficiently small values of $\kappa$ and $E_0$, the RHS of ~\eqref{I2 bound} can be absorbed into the LHS of ~\eqref{energy inequation 1}. We also use the Poincar\'e inequality to convert the last term on the LHS of ~\eqref{energy inequation 1} into a coercive term for the internal energy term $\frac{2\mu}{p+2}\norm{\psi}_{L^{p+2}_x}^{p+2}$ in $E(t)$. To this end, we observe that
	\begin{equation} \label{poincare for potential energy}
		\begin{aligned}
			\norm{\psi}_{L^{p+2}_x}^{p+2} &\le \norm{\abs{\psi}^{\frac{p}{2}+1} - \fint_{\T^2} \abs{\psi}^{\frac{p}{2}+1}}_{L^2_x}^2 + \norm{\fint_{\T^2} \abs{\psi}^{\frac{p}{2}+1}}_{L^2_x}^2 \lesssim \norm{\nabla \left( \abs{\psi}^{\frac{p}{2}+1} \right)}_{L^2_x}^2 + \norm{\psi}_{L^{\frac{p}{2}+1}}^{p+2} \\ &\le C\norm{\nabla \left( \abs{\psi}^{\frac{p}{2}+1} \right)}_{L^2_x}^2 + \kappa \norm{\psi}_{L^{p+2}_x}^{p+2} + C_{\kappa} \norm{\psi}_{L^2_x}^{p+2}.
		\end{aligned}
	\end{equation}
	In the last inequality, we interpolated between the $L^{p+2}_x$ and $L^2_x$ norms, which may be done when $p>2$. By choosing $\kappa$ sufficiently small, we can absorb the second term on the RHS into the LHS. For $p\le 2$, we can simply replace $\norm{\psi}_{L^{\frac{p}{2}+1}}^{p+2}$ by $\norm{\psi}_{L^2_x}^{p+2}$ since we are on a finite-size domain. Thus, irrespective of the value of $p$, ~\eqref{energy inequation 1} becomes
	\begin{equation} \label{energy inequation 3}
		\begin{aligned}
			&\frac{dE}{dt} + \frac{1}{C}\norm{\nabla u}_{L^2_x}^2 + \frac{1}{C}\norm{\sqrt{\rho} u}_{L^2_x}^2 + \frac{1}{C}\norm{D^2\psi}_{L^2_x}^2 + \frac{1}{C}\norm{\psi}_{L^{p+2}_x}^{p+2} + \frac{1}{C}\norm{\psi}_{L^{2p+2}_x}^{2p+2} \\
			&\quad \le C\norm{\psi}_{L^2_x}^{p+2} +  C\norm{u}_{L^2_x}^{\frac{4}{3}}\norm{u}_{H^1_x}^{\frac{8}{3}} \norm{\psi}_{H^1_x}^2.
		\end{aligned}
	\end{equation}  
	While we have the required coercive terms on the LHS, we cannot yet obtain a decay estimate for $E(t)$, since the second term on the RHS is out of reach using $E$ only. In order to control it, we set up an analogous inequality for a higher-order energy.
	
	\subsection{Higher-order energy estimate} \label{higher order estimate}
	In this subsection, we obtain further bounds for $\psi$ and $u$, this time with one more derivative than the energy $E$.
	
	\subsubsection{The Schr\"odinger equation} \label{NLS higher order estimate}
	Similarly to the case of the energy equation, we act upon ~\eqref{NLS} with the Laplacian $-\Delta$, multiply by $-\Delta\Bar{\psi}$, take the real part and integrate over the domain to get
	\begin{equation} \label{schrodinger equation higher order first step}
		\begin{aligned}
			\frac{1}{2}\frac{d}{dt}\norm{\Delta\psi}_{L^2_x}^2 &= - \lambda\Re\int_{\T^2}(\Delta^2\Bar{\psi})B\psi + \mu\Im\int_{\T^2}(\Delta^2\Bar{\psi})\abs{\psi}^p\psi =: I_3 + I_4.
		\end{aligned}
	\end{equation} 
	Once again, the first term on the RHS of ~\eqref{NLS} vanishes due to the boundary conditions. We now estimate the terms on the RHS of ~\eqref{schrodinger equation higher order first step}. For the first term,
	\begin{align*}
		I_3 &= \lambda\Re\int_{\T^2}\nabla(\Delta\Bar{\psi})\cdot\nabla (B\psi) \\ 
		&= \lambda\Re\int_{\T^2}\nabla(\Delta\Bar{\psi})\cdot\nabla\left(-\frac{1}{2}\Delta\psi + \frac{1}{2}\abs{u}^2\psi + iu\cdot\nabla\psi +\mu\abs{\psi}^p\psi\right) \\
		&= -\frac{\lambda}{2}\norm{D^3\psi}_{L^2_x}^2 + \lambda\Re\int_{\T^2}\nabla(\Delta\Bar{\psi})\cdot\nabla\left(\frac{1}{2}\abs{u}^2\psi + iu\cdot\nabla\psi +\mu\abs{\psi}^p\psi\right) \\
		&\le -\frac{\lambda}{4}\norm{D^3\psi}_{L^2_x}^2 + C\norm{\nabla(\abs{u}^2\psi)}_{L^2_x}^2 + C\norm{\nabla(u\cdot\nabla\psi)}_{L^2_x}^2 + C\norm{\nabla(\abs{\psi}^p\psi)}_{L^2_x}^2 ,
	\end{align*}
	which gives a dissipative term for $\psi$. For the term $I_4$, we again integrate by parts, followed by H\"older's inequality to obtain
	\begin{equation*}
		I_4 = -\mu\Im\int_{\T^2}\nabla(\Delta\Bar{\psi})\cdot\nabla(\abs{\psi}^p\psi) \le \frac{\lambda}{8}\norm{D^3\psi}_{L^2_x}^2 + C\norm{\nabla(\abs{\psi}^p\psi)}_{L^2_x}^2.
	\end{equation*}
	Thus, ~\eqref{schrodinger equation higher order first step} becomes
	\begin{equation} \label{schrodinger equation higher order third step}
		\begin{aligned}
			\frac{d}{dt}\norm{\Delta\psi}_{L^2_x}^2 + \frac{1}{C}\norm{D^3\psi}_{L^2_x}^2 &\lesssim  \norm{\nabla\left(\abs{u}^2\psi \right)}_{L^2_x}^2 + \norm{\nabla(u\cdot\nabla\psi)}_{L^2_x}^2 + \norm{\nabla \left( \abs{\psi}^p\psi \right) }_{L^2_x}^2 \\
			&=: I_5 + I_6 + I_7.
		\end{aligned}
	\end{equation}
	The first of these terms is bounded as
	\begin{equation} \label{I5 estimate}
		\begin{aligned}
			I_5 &\lesssim \norm{u}_{L^6_x}^2\norm{\nabla u}_{L^6_x}^2\norm{\psi}_{L^6_x}^2 + \norm{u}_{L^6_x}^4\norm{\nabla\psi}_{L^6_x}^2 \\
			&\lesssim \norm{u}_{L^2_x}^{\frac{2}{3}}\norm{u}_{H^1_x}^{\frac{4}{3}}\norm{\nabla u}_{L^2_x}^{\frac{2}{3}}\norm{\Delta u}_{L^2_x}^{\frac{4}{3}}\norm{\psi}_{H^1_x}^2 + \norm{u}_{L^2_x}^{\frac{4}{3}}\norm{u}_{H^1_x}^{\frac{8}{3}} \norm{\nabla\psi}_{L^2_x}^{\frac{2}{3}}\norm{\Delta\psi}_{L^2_x}^{\frac{4}{3}} \\
			&\le C_{\kappa} \norm{u}_{L^2_x}^2\norm{u}_{H^1_x}^4\norm{\nabla u}_{L^2_x}^2 \norm{\psi}_{H^1_x}^6 + \kappa\norm{\Delta u}_{L^2_x}^2 + C\norm{u}_{L^2_x}^{\frac{4}{3}}\norm{u}_{H^1_x}^{\frac{8}{3}} \norm{\nabla\psi}_{L^2_x}^{\frac{2}{3}}\norm{\Delta\psi}_{L^2_x}^{\frac{4}{3}} 
			,
		\end{aligned}
	\end{equation}
	using the Poincar\'e and GN interpolation inequalities. We have also applied Young's inequality to extract out dissipative terms in the last step. We again use $\kappa$ to denote a small number whose value shall be fixed later on, and $C_{\kappa}$ is a constant whose value depends on $\kappa$ and the system parameters. Similarly, for the second term on the RHS of ~\eqref{schrodinger equation higher order third step}, we have
	\begin{equation} \label{I6 estimate}
		\begin{aligned}
			I_6 &\lesssim \norm{\nabla u}_{L^3_x}^2 \norm{\nabla\psi}_{L^6_x}^2 + \norm{u}_{L^6_x}^2 \norm{D^2\psi}_{L^3_x}^2 \\
			&\lesssim \norm{\nabla u}_{L^2_x}^{\frac{4}{3}}\norm{D^2 u}_{L^2_x}^{\frac{2}{3}}\norm{\nabla\psi}_{L^2_x}^{\frac{2}{3}}\norm{D^2\psi}_{L^2_x}^{\frac{4}{3}} + \norm{u}_{L^2_x}^{\frac{2}{3}}\norm{u}_{H^1_x}^{\frac{4}{3}} \norm{\Delta\psi}_{L^2_x}^{\frac{4}{3}} \norm{D^3\psi}_{L^2_x}^{\frac{2}{3}} \\
			&\le C_{\kappa}\norm{\nabla u}_{L^2_x}^2\norm{\nabla\psi}_{L^2_x} \norm{\Delta\psi}_{L^2_x}^2 + \kappa\norm{\Delta u}_{L^2_x}^2 + C_{\kappa}\norm{u}_{L^2_x}\norm{u}_{H^1_x}^2 \norm{\Delta\psi}_{L^2_x}^2 + \kappa\norm{D^3\psi}_{L^2_x}^2 .
		\end{aligned}
	\end{equation}
	Finally, we apply the Sobolev embedding and Poincar\'e inequalities to bound $I_7$. This leads to
	\begin{equation} \label{I7 estimate}
		I_7 \lesssim \norm{\psi}_{L^{2(p+1)}_x}^{2p} \norm{\nabla\psi}_{L^{2(p+1)}_x}^2 \lesssim \norm{\psi}_{H^1_x}^{2p} \norm{\nabla\psi}_{H^1_x}^2 \lesssim \norm{\psi}_{H^1_x}^{2p} \norm{\Delta\psi}_{L^2_x}^2.
	\end{equation}   
	Combining all these inequalities into ~\eqref{schrodinger equation higher order third step} results in
	\begin{equation} \label{schrodinger equation higher order fourth step}
		\begin{aligned}
			&\frac{d}{dt}\norm{\Delta\psi}_{L^2_x}^2 + \frac{1}{C}\norm{D^3\psi}_{L^2_x}^2 \\ 
			&\le C_{\kappa}\left(\norm{u}_{L^2_x}^2\norm{u}_{H^1_x}^4\norm{\nabla u}_{L^2_x}^2\norm{\psi}_{H^1_x}^6 + \norm{\nabla u}_{L^2_x}^2\norm{\nabla\psi}_{L^2_x}\norm{\Delta\psi}_{L^2_x}^2 + \norm{u}_{L^2_x}\norm{u}_{H^1_x}^2 \norm{\Delta\psi}_{L^2_x}^2 \right) \\ 
			&\quad + C\norm{u}_{L^2_x}^{\frac{4}{3}}\norm{u}_{H^1_x}^{\frac{8}{3}} \norm{\nabla\psi}_{L^2_x}^{\frac{2}{3}}\norm{\Delta\psi}_{L^2_x}^{\frac{4}{3}} + C\norm{\psi}_{H^1_x}^{2p} \norm{\Delta\psi}_{L^2_x}^2 + \kappa\norm{\Delta u}_{L^2_x}^2 
			,
		\end{aligned}
	\end{equation}
	where we have absorbed $\kappa\norm{D^3\psi}_{L^2_x}^2$ into the LHS with a sufficiently small ~$\kappa$.
	
	\subsubsection{The Navier-Stokes equations} \label{NSE higher order estimate}
	We shall now derive a higher order estimate for the velocity field, which shall be combined with ~\eqref{schrodinger equation higher order fourth step}. Starting with ~\eqref{NSE'}, we first multiply it by $\partial_t u$ and integrate over the domain to obtain
	\begin{equation} \label{NSE higher order first step}
		\begin{aligned}
			\int_{\T^2}\rho\abs{\partial_t u}^2 + \frac{\nu}{2}\frac{d}{dt}\norm{\nabla u}_{L^2_x}^2 &= -\int_{\T^2}\rho (u\cdot\nabla u)\cdot\partial_t u - 2\lambda\int_{\T^2}\partial_t u\cdot\Im(\nabla\Bar{\psi}B\psi) \\ 
			&\qquad - 2\lambda\int_{\T^2}\partial_t u\cdot u\Re(\Bar{\psi}B\psi) -\alpha\int_{\T^2}\rho u\cdot\partial_t u \\
			&=: I_8 + I_9 + I_{10} + I_{11}.
		\end{aligned}
	\end{equation} 
	Recalling that $\mf\le \rho \le \Mf=\Mi+\mi-\mf$, we control the RHS. For the first term,  
	\begin{align*}
		I_8 &\le \frac{1}{8}\norm{\sqrt{\rho}\partial_t u}_{L^2_x}^2 + C \int_{\T^2}\abs{u}^2\abs{\nabla u}^2 \le \frac{1}{8}\norm{\sqrt{\rho}\partial_t u}_{L^2_x}^2 + C\norm{u}_{L^6_x}^2 \norm{\nabla u}_{L^3_x}^2 \\
		&\le \frac{1}{8}\norm{\sqrt{\rho}\partial_t u}_{L^2_x}^2 + C_{\kappa}\norm{u}_{L^2_x}\norm{u}_{H^1_x}^2 \norm{\nabla u}_{L^2_x}^2 + \kappa\norm{\Delta u}_{L^2_x}^2.
	\end{align*}
	In going to the last inequality, we used the GN interpolation and Poincar\'e inequalities. 
	Finally, Young's inequality lets us extract the required dissipative term. For the second integral in ~\eqref{NSE higher order first step},
	\begin{align*}
		I_9 &\le \frac{1}{8}\norm{\sqrt{\rho}\partial_t u}_{L^2_x}^2 + C\norm{\nabla\psi}_{L^6_x}^2\norm{B\psi}_{L^3_x}^2 \\
		&\le \frac{1}{8}\norm{\sqrt{\rho}\partial_t u}_{L^2_x}^2 + C\norm{\nabla\psi}_{L^2_x}^{\frac{2}{3}}\norm{\Delta\psi}_{L^2_x}^{\frac{4}{3}}\norm{B\psi}_{L^2_x}^{\frac{4}{3}}\norm{B\psi}_{H^1_x}^{\frac{2}{3}} \\
		&\le \frac{1}{8}\norm{\sqrt{\rho}\partial_t u}_{L^2_x}^2 + C_{\kappa}\norm{B\psi}_{L^2_x}^2 \left( \norm{\nabla\psi}_{L^2_x}^{\frac{2}{3}}\norm{\Delta\psi}_{L^2_x}^{\frac{4}{3}} + \norm{\nabla\psi}_{L^2_x}\norm{\Delta\psi}_{L^2_x}^2 \right) + \kappa \norm{\nabla(B\psi)}_{L^2_x}^2 ,
	\end{align*}  
	where the $B\psi$ term is handled via the GN interpolation and Young's inequalities. In the third integral in ~\eqref{NSE higher order first step},
	\begin{align*}
		I_{10} &\le \frac{1}{8}\norm{\sqrt{\rho}\partial_t u}_{L^2_x}^2 + C\norm{u}_{L^6_x}^2\norm{\psi}_{L^6_x}^2\norm{B\psi}_{L^6_x}^2 \\
		&\le \frac{1}{8}\norm{\sqrt{\rho}\partial_t u}_{L^2_x}^2 + C_{\kappa}\norm{B\psi}_{L^2_x}^2 \left( \norm{u}_{L^2_x}^{\frac{2}{3}}\norm{u}_{H^1_x}^{\frac{4}{3}}\norm{\psi}_{H^1_x}^2 + \norm{u}_{L^2_x}^2\norm{u}_{H^1_x}^4\norm{\psi}_{H^1_x}^6 \right) \\
		&\quad + \kappa\norm{\nabla(B\psi)}_{L^2_x}^2 ,
	\end{align*} 
	where the term $B\psi$ is handled just like in $I_9$. Finally, for the last term in ~\eqref{NSE higher order first step},
	\begin{equation}
		\begin{aligned} \label{HE9 3 terms}
			I_{11} &=  -\frac{\alpha}{2} \int_{\T^2} \rho \partial_t \abs{u}^2 = -\frac{\alpha}{2} \frac{d}{dt} \int_{\T^2} \rho \abs{u}^2 + \frac{\alpha}{2} \int_{\T^2} (\partial_t \rho ) \abs{u}^2 \\
			&= -\frac{\alpha}{2} \frac{d}{dt} \norm{ \sqrt{\rho} u}_{L^2_x}^2 - \frac{\alpha}{2} \int_{\T^2} \left( \nabla\cdot(\rho u) - 2\lambda\Re(\overline{\psi}B\psi) \right) \abs{u}^2\\
			&= -\frac{\alpha}{2} \frac{d}{dt} \norm{ \sqrt{\rho} u}_{L^2_x}^2 + \frac{\alpha}{2} \int_{\T^2} \rho u\cdot \nabla \abs{u}^2 + \alpha\lambda\int_{\T^2} \Re(\overline{\psi}B\psi) \abs{u}^2.
		\end{aligned}
	\end{equation}
	We estimate the second term on the RHS of ~\eqref{HE9 3 terms} using the H\"older and GN interpolation inequalities. This gives
	\begin{equation*}
		\begin{aligned}
			\frac{\alpha}{2} \int_{\T^2} \rho u\cdot \nabla \abs{u}^2 &\lesssim \norm{u}_{L^4_x}^2 \norm{\nabla u}_{L^2_x} \lesssim \norm{u}_{L^2_x}\norm{u}_{H^1_x}\norm{\nabla u}_{L^2_x}.
		\end{aligned}
	\end{equation*}
	Similarly, for the third term in ~\eqref{HE9 3 terms},
	\begin{align*}
		\alpha\lambda\int_{\T^2} \Re(\overline{\psi}B\psi) \abs{u}^2 &\lesssim \norm{\psi}_{L^6_x} \norm{u}_{L^6_x}^2 \norm{B\psi}_{L^2_x} \le C_{\kappa} \norm{\psi}_{H^1_x}^2\norm{u}_{L^2_x}^{\frac{4}{3}}\norm{u}_{H^1_x}^{\frac{8}{3}} + \kappa\norm{B\psi}_{L^2_x}^2.
	\end{align*}
	Substituting the above estimates into ~\eqref{NSE higher order first step}, we arrive at
	\begin{equation} \label{NSE higher order second step}
		\begin{aligned}
			&\nu\frac{d}{dt}\norm{\nabla u}_{L^2_x}^2 + \norm{\sqrt{\rho}\partial_t u}_{L^2_x}^2 + \alpha \frac{d}{dt} \norm{ \sqrt{\rho} u}_{L^2_x}^2 \\ 
			&\le C_{\kappa}\norm{B\psi}_{L^2_x}^2 \left(\norm{\nabla\psi}_{L^2_x}^{\frac{2}{3}}\norm{\Delta\psi}_{L^2_x}^{\frac{4}{3}} + \norm{\nabla\psi}_{L^2_x}\norm{\Delta\psi}_{L^2_x}^2 \right) \\
			&\quad + C_{\kappa}\norm{B\psi}_{L^2_x}^2 \left(\norm{u}_{L^2_x}^{\frac{2}{3}}\norm{u}_{H^1_x}^{\frac{4}{3}}\norm{\psi}_{H^1_x}^2 + \norm{u}_{L^2_x}^2\norm{u}_{H^1_x}^4\norm{\psi}_{H^1_x}^6 \right) \\
			&\quad + C_{\kappa} \left( \norm{u}_{L^2_x}\norm{u}_{H^1_x}^2 \norm{\nabla u}_{L^2_x}^2 + \norm{u}_{L^2_x}\norm{u}_{H^1_x} \norm{\nabla u}_{L^2_x} + \norm{\psi}_{H^1_x}^2\norm{u}_{L^2_x}^{\frac{4}{3}}\norm{u}_{H^1_x}^{\frac{8}{3}} \right) \\ 
			&\quad + \kappa\norm{B\psi}_{L^2_x}^2 + \kappa\norm{\nabla(B\psi)}_{L^2_x}^2 + \kappa\norm{\Delta u}_{L^2_x}^2 ,
		\end{aligned}
	\end{equation}
	where $C_{\kappa}$ depends on $\kappa$ and the system parameters.
	
	So far, we have obtained equations for $\norm{\nabla u}_{L^2_x}$ and $\norm{\Delta\psi}_{L^2_x}$, while including the higher-order dissipation corresponding to the wavefunction, $\norm{\nabla(B\psi)}_{L^2_x}^2$ or $\norm{D^3 \psi}_{L^2_x}^2$. What remains is to consider the higher-order velocity dissipation $\norm{\Delta u}_{L^2_x}^2$. To this end, we multiply ~\eqref{NSE'} by $-\theta\Delta u$, with $\theta$ to be determined, and integrate over the domain. This gives
	\begin{equation} \label{NSE higher order third step}
		\begin{aligned}
			\theta\nu\norm{\Delta u}_{L^2_x}^2 &= \theta\int_{\T^2}\rho\partial_t u\cdot\Delta u + \theta\int_{\T^2} \rho\left(u\cdot\nabla u \right)\cdot\Delta u + 2\lambda\theta\int_{\T^2}\Im(\nabla\Bar{\psi}B\psi)\cdot\Delta u \\ 
			&\qquad + 2\lambda\theta\int_{\T^2}\Re(\Bar{\psi}B\psi)u\cdot\Delta u + \alpha\theta\int_{\T^2} \rho u\cdot \Delta u \\
			&=: I_{12} + I_{13} + I_{14} + I_{15} + I_{16}.
		\end{aligned}
	\end{equation}
	When estimating the RHS, the goal is to extract $\norm{\Delta u}_{L^2_x}^2$ with a small coefficient, so it can be absorbed into the LHS. Thus, we have
	\begin{equation*}
		I_{12} \le \frac{\theta\nu}{10}\norm{\Delta u}_{L^2_x}^2 + C\theta\norm{\sqrt{\rho}\partial_t u}_{L^2_x}^2.
	\end{equation*} 
	The second integral is manipulated just as $I_8$ and yields
	\begin{align*}
		I_{13} &\le \frac{\theta\nu}{20}\norm{\Delta u}_{L^2_x}^2 + C_{\theta}\int_{\T^2}\abs{u}^2\abs{\nabla u}^2 \\
		&\le \frac{\theta\nu}{10}\norm{\Delta u}_{L^2_x}^2 + C_{\theta}\norm{u}_{L^2_x}\norm{u}_{H^1_x}^2 \norm{\nabla u}_{L^2_x}^2.
	\end{align*} 
	The bound for the integral $I_{14}$ follows from the GN interpolation, Poincar\'e, and Young inequalities, as
	\begin{align*}
		I_{14} &\le \frac{\theta\nu}{10}\norm{\Delta u}_{L^2_x}^2 + C_{\theta}\norm{\nabla\psi}_{L^6_x}^2\norm{B\psi}_{L^3_x}^2 \\
		&\le \frac{\theta\nu}{10}\norm{\Delta u}_{L^2_x}^2 + C_{\theta}\norm{\nabla\psi}_{L^2_x}^{\frac{2}{3}}\norm{\Delta\psi}_{L^2_x}^{\frac{4}{3}}\norm{B\psi}_{L^2_x}^{\frac{4}{3}} \norm{B\psi}_{H^1_x}^{\frac{2}{3}} \\
		&\le \frac{\theta\nu}{10}\norm{\Delta u}_{L^2_x}^2 + C_{\kappa,\theta}\norm{B\psi}_{L^2_x}^2 \left( \norm{\nabla\psi}_{L^2_x}^{\frac{2}{3}}\norm{\Delta\psi}_{L^2_x}^{\frac{4}{3}} + \norm{\nabla\psi}_{L^2_x}\norm{\Delta\psi}_{L^2_x}^2 \right) + \kappa\norm{\nabla(B\psi)}_{L^2_x}^2.
	\end{align*} 
	In a similar manner, we have
	\begin{align*}
		I_{15} &\le \frac{\theta\nu}{10}\norm{\Delta u}_{L^2_x}^2 + C_{\theta}\norm{u}_{L^6_x}^2\norm{\psi}_{L^6_x}^2\norm{B\psi}_{L^6_x}^2 \\
		&\le \frac{\theta\nu}{10}\norm{\Delta u}_{L^2_x}^2 + C_{\theta}\norm{u}_{L^2_x}^{\frac{2}{3}}\norm{u}_{H^1_x}^{\frac{4}{3}}\norm{\psi}_{H^1_x}^2\norm{B\psi}_{L^2_x}^{\frac{2}{3}}\norm{B\psi}_{H^1_x}^{\frac{4}{3}} \\
		&\le \frac{\theta\nu}{10}\norm{\Delta u}_{L^2_x}^2 + C_{\kappa,\theta}\norm{B\psi}_{L^2_x}^2 \left( \norm{u}_{L^2_x}^{\frac{2}{3}}\norm{u}_{H^1_x}^{\frac{4}{3}}\norm{\psi}_{H^1_x}^2 + \norm{u}_{L^2_x}^2\norm{u}_{H^1_x}^4\norm{\psi}_{H^1_x}^6 \right) + \kappa\norm{\nabla(B\psi)}_{L^2_x}^2.
	\end{align*} 
	Finally, for the last integral in ~\eqref{NSE higher order third step},
	\begin{equation*}
		I_{16} \le \frac{\theta\nu}{10}\norm{\Delta u}_{L^2_x}^2 + C\theta\norm{\sqrt{\rho}u}_{L^2_x}^2.
	\end{equation*}
	Thus, ~\eqref{NSE higher order third step} becomes
	\begin{equation} \label{NSE higher order fourth step}
		\begin{aligned}
			\frac{\theta\nu}{2}\norm{\Delta u}_{L^2_x}^2 &\le C_{\kappa,\theta} \norm{B\psi}_{L^2_x}^2 \left( \norm{\nabla\psi}_{L^2_x}^{\frac{2}{3}}\norm{\Delta\psi}_{L^2_x}^{\frac{4}{3}} + \norm{\nabla\psi}_{L^2_x}\norm{\Delta\psi}_{L^2_x}^2 \right) \\
			&\quad + C_{\kappa,\theta} \norm{B\psi}_{L^2_x}^2 \left( \norm{u}_{L^2_x}^{\frac{2}{3}}\norm{u}_{H^1_x}^{\frac{4}{3}}\norm{\psi}_{H^1_x}^2 + \norm{u}_{L^2_x}^2\norm{u}_{H^1_x}^4\norm{\psi}_{H^1_x}^6 \right) \\
			&\quad + C_{\theta}\norm{u}_{L^2_x}\norm{u}_{H^1_x}^2 \norm{\nabla u}_{L^2_x}^2 + C\theta\norm{\sqrt{\rho}\partial_t u}_{L^2_x}^2 + C\theta\norm{\sqrt{\rho}u}_{L^2_x}^2 + \kappa\norm{\nabla(B\psi)}_{L^2_x}^2 .
		\end{aligned}
	\end{equation} 
	
	We now add \eqref{schrodinger equation higher order fourth step}, \eqref{NSE higher order second step}, and~\eqref{NSE higher order fourth step}. We also observe that
	\begin{equation*}
		\norm{\nabla(B\psi)}_{L^2_x}^2 \lesssim \norm{D^3 \psi}_{L^2_x}^2 + \norm{\nabla\left(\abs{u}^2\psi \right)}_{L^2_x}^2 + \norm{\nabla(u\cdot\nabla\psi)}_{L^2_x}^2 + \norm{\nabla \left( \abs{\psi}^p\psi \right) }_{L^2_x}^2,
	\end{equation*}
	where the last three terms on the RHS are the same as $I_5,I_6,$ and $I_7$ in ~\eqref{schrodinger equation higher order third step}. We bound them just as in ~\eqref{I5 estimate}--\eqref{I7 estimate}. Choosing $\theta$ sufficiently small, and subsequently $\kappa$ also small enough, we absorb $\norm{\sqrt{\rho}\partial_t u}_{L^2_x}^2$ and $\norm{\Delta u}_{L^2_x}^2$ on the RHS into the corresponding terms on the LHS. Finally, what remains is
	\begin{equation} \label{delta psi nabla u equations combined}
		\begin{aligned}
			&\frac{d}{dt}\left( \norm{\Delta\psi}_{L^2_x}^2 + \nu\norm{\nabla u}_{L^2_x}^2 + \alpha\norm{\sqrt{\rho}u}_{L^2_x}^2 \right) + \frac{1}{C_{\kappa,\theta}}\norm{D^3\psi}_{L^2_x}^2 + \frac{1}{C_{\kappa,\theta}}\norm{\sqrt{\rho}\partial_t u}_{L^2_x}^2 + \frac{1}{C_{\kappa,\theta}}\norm{\Delta u}_{L^2_x}^2 \\
			&\le C_{\kappa,\theta} \left( \norm{u}_{L^2_x}\norm{u}_{H^1_x}^2 \norm{\nabla u}_{L^2_x}^2 + \norm{u}_{L^2_x}\norm{u}_{H^1_x} \norm{\nabla u}_{L^2_x} + \norm{\psi}_{H^1_x}^2\norm{u}_{L^2_x}^{\frac{4}{3}}\norm{u}_{H^1_x}^{\frac{8}{3}} \right) \\ 
			&\quad + C_{\kappa,\theta} \norm{B\psi}_{L^2_x}^2 \left( \norm{\nabla\psi}_{L^2_x}^{\frac{2}{3}}\norm{\Delta\psi}_{L^2_x}^{\frac{4}{3}} + \norm{\nabla\psi}_{L^2_x}\norm{\Delta\psi}_{L^2_x}^2 \right) \\
			&\quad + C_{\kappa,\theta} \norm{B\psi}_{L^2_x}^2 \left( \norm{u}_{L^2_x}^{\frac{2}{3}}\norm{u}_{H^1_x}^{\frac{4}{3}}\norm{\psi}_{H^1_x}^2 + \norm{u}_{L^2_x}^2\norm{u}_{H^1_x}^4\norm{\psi}_{H^1_x}^6 \right) \\
			&\quad + C_{\kappa}\left(\norm{u}_{L^2_x}^2\norm{u}_{H^1_x}^4\norm{\nabla u}_{L^2_x}^2\norm{\psi}_{H^1_x}^6 + \norm{\nabla u}_{L^2_x}^2\norm{\nabla\psi}_{L^2_x}\norm{\Delta\psi}_{L^2_x}^2 + \norm{u}_{L^2_x}\norm{u}_{H^1_x}^2 \norm{\Delta\psi}_{L^2_x}^2 \right) \\ 
			&\quad + C_{\kappa}\norm{u}_{L^2_x}^{\frac{4}{3}}\norm{u}_{H^1_x}^{\frac{8}{3}} \norm{\nabla\psi}_{L^2_x}^{\frac{2}{3}}\norm{\Delta\psi}_{L^2_x}^{\frac{4}{3}} + C_{\kappa}\norm{\psi}_{H^1_x}^{2p} \norm{\Delta\psi}_{L^2_x}^2 \\
			&\quad + C\theta\norm{\sqrt{\rho}u}_{L^2_x}^2 + \kappa\norm{B\psi}_{L^2_x}^2 ,
		\end{aligned}
	\end{equation}
	where we absorbed $\norm{D^3\psi}_{L^2_x}^2$ with an appropriate choice of ~$\kappa$. This is the higher-order energy estimate.
	
	\subsubsection{The Gr\"onwall inequality step} \label{gronwall inequality step for higher order estimate}
	Having derived the equations for the higher-order norms of $u$ and $\psi$, and while accounting for the relevant dissipative terms, the goal now is to use a Gr\"onwall-type argument. 
	
	\begin{lem}[Algebraic decay rate for energies] \label{lem:energy + higher order energy estimates}
		The sum of the energy $E(t)$ and the higher-order energy ~$X(t) := \norm{\Delta\psi(t)}_{L^2_x}^2 + \nu\norm{\nabla u(t)}_{L^2_x}^2$ decays algebraically in time as $(1+t)^{-\left(1+\frac{2}{p}\right)}$.
	\end{lem}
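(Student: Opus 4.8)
The plan is to fuse the first-order energy inequality \eqref{energy inequation 3} with the higher-order estimate \eqref{delta psi nabla u equations combined} into a single differential inequality for the functional $Y(t):=E(t)+X(t)$, and then close it with a Gr\"onwall/Duhamel argument whose forcing rate is dictated by Lemma~\ref{lem:superfluid mass estimate}. First I would add the two inequalities. Since \eqref{delta psi nabla u equations combined} carries the extra term $\alpha\norm{\sqrt{\rho}u}_{L^2_x}^2$ under the time derivative, and this is comparable to the $\tfrac12\norm{\sqrt{\rho}u}_{L^2_x}^2$ already inside $E$, the differentiated quantity is equivalent to $Y$ up to fixed constants. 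On the left one collects the full dissipation
\[
	\mathcal{D} := \norm{\nabla u}_{L^2_x}^2 + \norm{\sqrt{\rho}u}_{L^2_x}^2 + \norm{D^2\psi}_{L^2_x}^2 + \norm{D^3\psi}_{L^2_x}^2 + \norm{\Delta u}_{L^2_x}^2 + \norm{\sqrt{\rho}\partial_t u}_{L^2_x}^2 + \norm{\psi}_{L^{p+2}_x}^{p+2} + \norm{\psi}_{L^{2p+2}_x}^{2p+2},
\]
each piece appearing with a coefficient of size $1/C$.

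The central structural step is coercivity: I claim $\mathcal{D}\gtrsim Y$. Indeed $\norm{\sqrt{\rho}u}_{L^2_x}^2$, $\norm{\nabla u}_{L^2_x}^2$ and $\norm{\psi}_{L^{p+2}_x}^{p+2}$ occur verbatim in both $Y$ and $\mathcal{D}$, while the two remaining pieces of $Y$ follow from the Poincar\'e inequality applied to the zero-mean fields $\nabla\psi$ and $\Delta\psi$,
\[
	\norm{\nabla\psi}_{L^2_x}^2 \lesssim \norm{D^2\psi}_{L^2_x}^2, \qquad \norm{\Delta\psi}_{L^2_x}^2 \lesssim \norm{D^3\psi}_{L^2_x}^2,
\]
both right-hand sides being present in $\mathcal{D}$. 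Hence a positive multiple of $Y$ can always be extracted from $\mathcal{D}$.

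Next I would absorb \emph{every} term on the right-hand sides of \eqref{energy inequation 3} and \eqref{delta psi nabla u equations combined} except the genuine forcing $C\norm{\psi}_{L^2_x}^{p+2}$. The key is that each such term is superlinear in smallness: every factor $\norm{u}_{L^2_x}$, $\norm{u}_{H^1_x}$, $\norm{\nabla\psi}_{L^2_x}$, $\norm{\Delta\psi}_{L^2_x}$ is $\lesssim Y^{1/2}$, and $\norm{\psi}_{H^1_x}$, $\norm{B\psi}_{L^2_x}$ are controlled by $Y^{1/2}$ together with the small constant $S_0$ carried by the $L^2$-mass. Consequently each product is either (a dissipation norm)$\times Y^{\theta}$ with $\theta>0$, absorbed into $\mathcal{D}$ once $Y$ is small, or a pure power $Y^{1+\theta}\le\delta^{\theta}Y$, absorbed into the coercive term $cY$ obtained above; the residual linear terms $C\theta\norm{\sqrt{\rho}u}_{L^2_x}^2$ and $\kappa\norm{B\psi}_{L^2_x}^2$ are swallowed by fixing $\theta,\kappa$ small. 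This leaves $\tfrac{d}{dt}Y + cY \le C\norm{\psi}_{L^2_x}^{p+2}$. Since $\norm{\psi}_{L^2_x}^{p+2}=S(t)^{(p+2)/2}$ and Lemma~\ref{lem:superfluid mass estimate} gives $S(t)\lesssim(1+t)^{-2/p}$, the forcing decays precisely like $(1+t)^{-(1+2/p)}$, so
\[
	Y(t) \le Y(0)e^{-ct} + C\int_0^t e^{-c(t-s)}(1+s)^{-\left(1+\frac{2}{p}\right)}\,ds \lesssim (1+t)^{-\left(1+\frac{2}{p}\right)},
\]
where the Duhamel integral is estimated by splitting at $s=t/2$ and the transient $Y(0)e^{-ct}$ is subdominant.

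The hard part is not the above bookkeeping but the self-consistency of the absorption: every estimate presupposes $Y(t)\le\delta$ throughout $[0,T]$, which is exactly the smallness the conclusion produces. I would close this by a continuity/bootstrap argument — posit $\sup_{[0,T]}Y\le\delta$, run the inequality to obtain $\sup_{[0,T]}Y\le C(Y(0)+S_0^{(p+2)/2})$, and then invoke the data condition \eqref{small data condition statement} to strictly improve the hypothesis — with the essential point, needed for the eventual globalness, that all constants $C,c$ and the thresholds on $\theta,\kappa$ are uniform in $T$. A secondary point to watch is the $p$-dependence: the passage from $\norm{\nabla(\abs{\psi}^{\frac{p}{2}+1})}_{L^2_x}^2$ to coercivity for $\norm{\psi}_{L^{p+2}_x}^{p+2}$ in \eqref{poincare for potential energy} is handled differently for $p\lessgtr 2$, and the term $\norm{\psi}_{H^1_x}^{2p}\norm{\Delta\psi}_{L^2_x}^2$ must be treated as genuinely higher order, which is where the reliance on small data (beyond the bare range of $p$) makes itself felt.
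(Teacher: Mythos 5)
Your proposal is correct and follows essentially the same route as the paper's proof: combining the first-order and higher-order energy inequalities, extracting coercivity via Poincar\'e (using the zero-mean of $\nabla\psi$ and $\Delta\psi$), isolating the forcing $C\norm{\psi}_{L^2_x}^{p+2}=CS^{\frac{p}{2}+1}\lesssim (1+t)^{-(1+\frac{2}{p})}$, absorbing the superlinear and small-coefficient linear remainders via a bootstrap under the small-data hypothesis, and closing with a Duhamel integral split at $s=t/2$. The points you flag as delicate (self-consistency of the absorption, the $p\lessgtr 2$ split in \eqref{poincare for potential energy}, and the $\norm{\psi}_{H^1_x}^{2p}\norm{\Delta\psi}_{L^2_x}^2$ term requiring smallness of $S_0$) are exactly the ones the paper handles.
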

	
	\begin{proof}
		We begin by denoting 
		\begin{gather*}
			Y := \frac{1}{C} \left(\norm{D^3 \psi}_{L^2_x}^2 + \norm{\sqrt{\rho}\partial_t u}_{L^2_x}^2 + \norm{\Delta u}_{L^2_x}^2 \right) ,
		\end{gather*} 
		so we can rewrite ~\eqref{delta psi nabla u equations combined}, after updating $\theta,\kappa, E_0$, and $S_0$ to be sufficiently small, as
		\begin{equation} \label{gronwall expanded form}
			\begin{aligned}
				\frac{dX}{dt} + \alpha\frac{d}{dt} \norm{ \sqrt{\rho} u}_{L^2_x}^2 + Y &\le CS^p X + Q_1(X+E) + \norm{B\psi}_{L^2_x}^2  Q_2(X+E) \\ 
				&\qquad + \frac{\alpha}{2} \norm{\sqrt{\rho}u}_{L^2_x}^2 + \frac{\nu}{2}\norm{\nabla u}_{L^2_x}^2 + \lambda\norm{B\psi}_{L^2_x}^2 ,
			\end{aligned}
		\end{equation}
		where $Q_1(X+E)$ is a strictly super-linear polynomial, while $Q_2(X+E)$ contains both linear and super-linear terms. To arrive at ~\eqref{gronwall expanded form}, we have also expanded the Sobolev norms as
		\begin{equation} \label{u H1 norm expansion}
			\begin{multlined}
				\norm{u}_{H^1_x}^2 = \norm{u}_{L^2_x}^2 + \norm{\nabla u}_{L^2_x}^2
				\le \mf^{-1}\norm{\sqrt{\rho}u}_{L^2_x}^2 + \nu^{-1} (\nu\norm{\nabla u}_{L^2_x}^2)
				\lesssim E + X
			\end{multlined}
		\end{equation}
		for the velocity, and
		\begin{equation}\label{psi H2 norm expansion}
			\norm{\psi}_{H^1_x}^2 \le \norm{\psi}_{L^2_x}^2 + \norm{\nabla\psi}_{L^2_x}^2 \lesssim S + E 
		\end{equation}
		for the wavefunction.
		Next, we add ~\eqref{Energy equation} and ~\eqref{gronwall expanded form} to end up with
		\begin{equation} \label{XE equation initial}
			\begin{aligned}
				&\frac{d}{dt}(X+E) + Y + \frac{1}{C}\norm{\nabla u}_{L^2_x}^2 + \frac{1}{C}\norm{\sqrt{\rho}u}_{L^2_x}^2 + \frac{1}{C}\norm{B\psi}_{L^2_x}^2 \\
				&\le CS^p X + Q_1(X+E) + \norm{B\psi}_{L^2_x}^2 Q_2(X+E).
			\end{aligned}
		\end{equation} 
		We use the Poincar\'e inequality to rewrite $Y$ in order to get decaying norms. Indeed,
		\begin{equation*}
			Y \gtrsim \norm{\Delta\psi}_{L^2_x}^2 + \norm{\nabla u}_{L^2_x}^2 \gtrsim X.
		\end{equation*}
		Additionally, we also use the analysis in ~\eqref{B psi is bounded from below by D^2 psi}--\eqref{energy inequation 3} to rewrite $\norm{B\psi}_{L^2_x}^2$ on the LHS of ~\eqref{XE equation initial} in terms of $\norm{D^2\psi}_{L^2_x}^2$, which in turn can be downgraded to $\norm{\nabla\psi}_{L^2_x}^2$ using the Poincar\'e inequality. 
		One can also represent $\norm{B\psi}_{L^2_x}^2$ on the RHS of ~\eqref{XE equation initial} by
		\begin{align*}
			\norm{B\psi}_{L^2_x}^2 &\lesssim \norm{\Delta \psi}_{L^2_x}^2 + \norm{\abs{u}^2\psi}_{L^2_x}^2 + \norm{u\cdot\nabla\psi}_{L^2_x}^2 +  \norm{\psi}_{L^{2p+2}_x}^{2p+2} \\
			&\lesssim X + (S_0 + E_0) (X+E)^2 + (X+E)^2 + \norm{\psi}_{L^2_x}^2\norm{\psi}_{H^1_x}^{2p} \\
			&\lesssim X + (S_0 + E_0 + 1) (X+E)^2 + S_0^{p+1} + S_0 E^p ,
		\end{align*} 
		where we have used the estimates ~\eqref{I1 bound} and ~\eqref{I2 bound}, and the GN inequality. After all of the above manipulations, ~\eqref{XE equation initial} now reads
		\begin{equation} \label{XE equation coercive}
			\frac{d}{dt}(X+E) + \beta (X+E) 
			\le CS^{\frac{p}{2}+1} + CS_0^p (1+S_0) (X+E) + Q_1(X+E) + \Tilde{Q}_2(X+E) ,
		\end{equation} 
		where $\beta$ depends on the system parameters, and the polynomials $Q_1$ and $\Tilde{Q}_2$ are strictly super-linear. The first term on the RHS results from the estimates in ~\eqref{poincare for potential energy}. As for the second term on the RHS, we note that this can be absorbed into the LHS by tweaking $S_0$.
		
		For notational convenience, we write $Z := X+E$ and use $Q := Q_1 + \Tilde{Q}_2$ to denote the strictly super-linear polynomial in the RHS of ~\eqref{XE equation coercive}, leaving us with
		\begin{equation} \label{Z equation}
			\frac{dZ}{dt} + \beta Z \le \frac{C S_0^{\frac{p}{2}+1}}{\left( 1+S_0^{\frac{p}{2}}t \right)^{1+\frac{2}{p}}} + Q(Z).
		\end{equation} 
		The Duhamel solution for $Z(t)$ obeys
		\begin{equation} \label{Z duhamel solution}
			Z(t) \le e^{-\beta t}Z_0 + C S_0^{\frac{p}{2}+1}\int_0^t \frac{e^{-\beta (t-s)}}{\left( 1+S_0^{\frac{p}{2}}s \right)^{1+\frac{2}{p}}} ds + \int_0^t e^{-\beta (t-s)} Q\left(Z(s) \right) ds.
		\end{equation}
		We set the size of the initial data as $Z(0) =: Z_0 \le \veps$.
		We need to use a bootstrap argument to show that $Z(t) \le 3\veps \text{ for } t\in [0,T]$. Specifically, we prove that the hypothesis $Z(t) \le 4\veps$ for $t\le t_1$ leads to the stronger conclusion $Z(t) \le 3\veps$ for $t\le t_1$, where $t_1\in [0,T]$.  To this end, we estimate each integral on the RHS of ~\eqref{Z duhamel solution}. The first integral is split into two parts to take advantage of the exponential decay factor. Thus,
		\begin{equation} \label{splitting time integral}
			\begin{aligned}
				\int_0^t \frac{e^{-\beta (t-s)}}{\left( 1+S_0^{\frac{p}{2}}s \right)^{1+\frac{2}{p}}} ds &= \int_0^{\frac{t}{2}} \frac{e^{-\beta (t-s)}}{\left( 1+S_0^{\frac{p}{2}}s \right)^{1+\frac{2}{p}}} ds + \int_{\frac{t}{2}}^t \frac{e^{-\beta (t-s)}}{\left( 1+S_0^{\frac{p}{2}}s \right)^{1+\frac{2}{p}}} ds \\
				&\le e^{-\beta (t-\frac{t}{2})}\int_0^{\frac{t}{2}} ds + \frac{1}{\left( 1+S_0^{\frac{p}{2}}\frac{t}{2} \right)^{1+\frac{2}{p}}} \int_{\frac{t}{2}}^t e^{-\beta (t-s)} ds \\
				&\le \frac{t}{2} e^{-\beta \frac{t}{2}} + \frac{\beta^{-1}}{\left( 1+S_0^{\frac{p}{2}}\frac{t}{2} \right)^{1+\frac{2}{p}}} \le \frac{C}{\left( 1+S_0^{\frac{p}{2}}t \right)^{1+\frac{2}{p}}}.
			\end{aligned}
		\end{equation} 
		The last inequality is a result of the exponential decay of the first term, compared to the algebraic decay of the second. The second integral in ~\eqref{Z duhamel solution} is more straightforward, with
		\begin{equation*}
			\int_0^t e^{-\beta (t-s)} Q\left(Z(s)\right) ds \le Q\left(4\veps \right) \int_0^t e^{-\beta (t-s)} ds \le C Q\left(4\veps \right).
		\end{equation*} 
		Now we choose $\veps$ small enough (call it $\veps_0$) so that the RHS $\le \veps$. Similarly, the contribution from the first integral term in ~\eqref{Z duhamel solution} is made less than $\veps$ for all $S_0 \le \veps_0<1$ small enough.
		This completes the bootstrap argument, and we can see that indeed $Z(t)\le 3\veps$. For $\veps_0$ sufficiently small, the linear dissipation in ~\eqref{Z equation} dominates the nonlinearities, and we may write the equation as
		\begin{equation} \label{Z equation nonlinearities absorbed}
			\frac{dZ}{dt} + \frac{1}{C} Z \le \frac{CS_0^{\frac{p}{2}+1}}{\left( 1+S_0^{\frac{p}{2}}t \right)^{1+\frac{2}{p}}} ,
		\end{equation} 
		whose solution, following ~\eqref{Z equation}--\eqref{splitting time integral}, obeys
		\begin{equation} \label{Z solution bound}
			Z(t) \le Z_0 e^{-\frac{t}{C}} + \frac{CS_0^{\frac{p}{2}+1}}{\left( 1+S_0^{\frac{p}{2}}t \right)^{1+\frac{2}{p}}} \lesssim Z_0 + S_0^{\frac{p}{2}+1} .
		\end{equation}
		Returning to ~\eqref{XE equation initial}, we now absorb the last term on the RHS into the LHS, which is possible for small enough data since $\sup_{0\le t\le T} Z(t)\le 3\veps_0$. Furthermore, in the regime of small data, the super-linear polynomial $Q_1$ can be dominated by the linear term on the RHS, which leaves us with
		\begin{equation} \label{XE equation final}
			\frac{dZ}{dt} + Y + \frac{1}{C}\norm{\nabla u}_{L^2_x}^2 + \frac{1}{C}\norm{\sqrt{\rho}u}_{L^2_x}^2 + \frac{1}{C}\norm{B\psi}_{L^2_x}^2 \le CS_0^p Z.
		\end{equation}
		Employing the bound for $Z$ from ~\eqref{Z solution bound} in the RHS of ~\eqref{XE equation final} and integrating over $[0,T]$, we estimate the dissipation as
		\begin{equation} \label{Y dissipative estimates}
			\begin{aligned} 
				&\norm{D^3 \psi}_{L^2_{[0,T]} L^2_x}^2 + \norm{\sqrt{\rho}\partial_t u}_{L^2_{[0,T]} L^2_x}^2 + \norm{\Delta u}_{L^2_{[0,T]} L^2_x}^2 + \norm{\nabla u}_{L^2_{[0,T]} L^2_x}^2 + \norm{\sqrt{\rho}u}_{L^2_{[0,T]} L^2_x}^2 + \norm{B\psi}_{L^2_{[0,T]}
					L^2_x}^2 \\ 
				&\lesssim Z_0 + Z_0 S_0^p + S_0^{p+1} \lesssim Z_0 + S_0^{p+1} .
			\end{aligned}
		\end{equation} 
		The last inequality holds because $S_0<1$. Thus, we can achieve small values for the RHS of ~\eqref{Y dissipative estimates} by selecting appropriate $Z_0$ and $S_0$.
		
		Another useful estimate for the dissipative terms results from integrating ~\eqref{XE equation final} over the time interval $[t,2t]$, where $t\ge 1$. This gives
		\begin{equation} \label{improved Y dissipative estimates}
			\begin{aligned} 
				&\norm{D^3 \psi}_{L^2_{[t,2t]} L^2_x}^2 + \norm{\sqrt{\rho}\partial_t u}_{L^2_{[t,2t]} L^2_x}^2 + \norm{\Delta u}_{L^2_{[t,2t]} L^2_x}^2 + \norm{\nabla u}_{L^2_{[t,2t]} L^2_x}^2 + \norm{\sqrt{\rho}u}_{L^2_{[t,2t]} L^2_x}^2 + \norm{B\psi}_{L^2_{[t,2t]} L^2_x}^2 \\ 
				&\lesssim Z_0 e^{-\frac{t}{C}} + \frac{S_0^{\frac{p}{2}+1}}{\left(1+S_0^{\frac{p}{2}}t\right)^{\frac{2}{p}}}. 
			\end{aligned}
		\end{equation}
		This time-decaying bound on the dissipation is necessary to obtain a sharp control of the dynamics at large times.
	\end{proof} 
	
	\subsection{The highest-order a priori estimate for $\psi$} \label{the highest-order a priori estimate for psi}
	
	From the previous analysis, we have obtained $B\psi\in L^2_{[0,T]}H^1_x$. However, as pointed out in the discussion following Definition ~\ref{existence time definition}, we seek $B\psi\in ~L^2_{[0,T]}L^{\infty}_x$. To this end, we would like to obtain an even higher order a priori estimate, only for $\psi$.
	
	\begin{lem}[Algebraic decay rate for highest-order norm of $\psi$] \label{lem:highest order energy estimates}
		For $S_0, E_0$, and $Z_0$ small enough, and with $s=\frac{5}{4}$, the homogeneous Sobolev norm $\norm{\psi(t)}_{\dot{H}^{2s}_x}$ decays algebraically in time as $(1+t)^{-2-\frac{2}{p}}$. In addition, if $\norm{\psi_0}_{\dot{H}^{2s}_x}$ is sufficiently small, the higher-order dissipation $\norm{\psi}_{L^2_{[0,T]} \dot{H}^{2s+1}_x}$ may be made as small as required, independent of the time ~$T$.
	\end{lem}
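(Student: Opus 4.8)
The plan is to push the energy hierarchy of Lemmas \ref{lem:superfluid mass estimate} and \ref{lem:energy + higher order energy estimates} one level higher, now working at regularity $\dot{H}^{2s}$ with $s=\tfrac54$, exactly as \eqref{schrodinger equation higher order first step} did at the level of $\norm{\Delta\psi}_{L^2_x}$. First I would apply the fractional operator $(-\Delta)^{s}$ to \eqref{NLS}, pair it with $(-\Delta)^{s}\Bar{\psi}$ in $L^2_x$, take the real part, and integrate over $\T^2$. The dispersive term $\tfrac{1}{2i}\Delta\psi$ contributes a purely imaginary quantity that drops out under $\Re$, while the Laplacian hidden inside $B$ (see \eqref{defining B_L}) produces the dissipative term $\tfrac{\lambda}{2}\norm{\psi}_{\dot{H}^{2s+1}_x}^2=\tfrac{\lambda}{2}\norm{\psi}_{\dot{H}^{7/2}_x}^2$ on the left. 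This yields an identity of the schematic form
\[
\tfrac12\tfrac{d}{dt}\norm{\psi}_{\dot{H}^{2s}_x}^2 + \tfrac{\lambda}{2}\norm{\psi}_{\dot{H}^{7/2}_x}^2 = \mathcal{R},
\]
where $\mathcal{R}$ collects the pairings of $(-\Delta)^s\psi$ against $(-\Delta)^s$ applied to the lower-order parts $\tfrac12\abs{u}^2\psi$, $iu\cdot\nabla\psi$ and $\mu\abs{\psi}^p\psi$ of $B\psi$, together with the NLS nonlinearity $\tfrac{\mu}{i}\abs{\psi}^p\psi$.

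Next I would estimate $\mathcal{R}$ term by term. For the two contributions coming from $B_L$, namely those built from $\abs{u}^2\psi$ and $u\cdot\nabla\psi$, the tools are the fractional Leibniz (Kato--Ponce) rule together with fractional Gagliardo--Nirenberg interpolation, the Sobolev embedding $H^{5/2}(\T^2)\hookrightarrow W^{1,\infty}(\T^2)$, and Young's inequality; each such term is split into a small multiple $\kappa\norm{\psi}_{\dot{H}^{7/2}_x}^2$ (absorbed on the left) plus products of the lower norms $\norm{u}_{H^2_x}$, $\norm{u}_{H^1_x}$, $\norm{\psi}_{H^2_x}$ that are already controlled by $S$, $E$ and $X$ through \eqref{u H1 norm expansion}--\eqref{psi H2 norm expansion}. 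The velocity at the needed regularity $H^2_x$ is available because $\norm{\Delta u}_{L^2_x}$ was captured as a dissipative term in Lemma \ref{lem:energy + higher order energy estimates}. I expect the genuinely delicate step to be the power nonlinearity $\abs{\psi}^p\psi$: for non-integer $p$ the map $z\mapsto\abs{z}^p z$ has only limited smoothness, so controlling $(-\Delta)^{s}(\abs{\psi}^p\psi)$ requires the fractional chain rule and a careful balancing of the order-$\tfrac54$ derivative against the H\"older regularity of the nonlinearity — precisely where the case distinctions in $p$ already flagged for the lower estimates resurface.

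Having bounded $\mathcal{R}$, I would arrive at a differential inequality
\[
\tfrac{d}{dt}\norm{\psi}_{\dot{H}^{2s}_x}^2 + \tfrac1C\norm{\psi}_{\dot{H}^{2s+1}_x}^2 \le a(t)\,\norm{\psi}_{\dot{H}^{2s}_x}^2 + f(t),
\]
in which the coefficient $a(t)$ and forcing $f(t)$ are products of the already-decaying quantities $S(t)$ and $Z(t)=X(t)+E(t)$ and their dissipative integrals, hence small and integrable by Lemmas \ref{lem:superfluid mass estimate} and \ref{lem:energy + higher order energy estimates}, and in particular by the sharp time-decaying bound \eqref{improved Y dissipative estimates}. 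Since on $\T^2$ the homogeneous norms obey the Poincar\'e-type inequality $\norm{\psi}_{\dot{H}^{2s}_x}\lesssim\norm{\psi}_{\dot{H}^{2s+1}_x}$, I can convert part of the dissipation into a coercive term $\beta\norm{\psi}_{\dot{H}^{2s}_x}^2$ and then repeat the Duhamel-plus-bootstrap argument of Lemma \ref{lem:energy + higher order energy estimates}, splitting the algebraically decaying source as in \eqref{splitting time integral} and choosing the data small. Because $f(t)$ decays one power of $(1+t)$ faster than the source in the $X$-estimate, this delivers the claimed rate $(1+t)^{-2-\frac2p}$.

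Finally, for the dissipation statement I would integrate the coercive inequality over $[0,T]$: the time integral of $\norm{\psi}_{\dot{H}^{2s+1}_x}^2$ is then controlled by $\norm{\psi_0}_{\dot{H}^{2s}_x}^2$ plus the $T$-independent integral of $f(t)$, both of which are made as small as desired by shrinking the data. This yields the uniform-in-$T$ smallness of $\norm{\psi}_{L^2_{[0,T]}\dot{H}^{2s+1}_x}$ and completes the proof. The only real obstacle throughout is the fractional differentiation of $\abs{\psi}^p\psi$; everything else follows the template already established for $E$ and $X$.
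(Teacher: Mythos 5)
Your overall architecture is exactly the paper's: apply $(-\Delta)^s$ to \eqref{NLS}, pair with $(-\Delta)^s\Bar{\psi}$, kill the dispersive term under $\Re$, extract the dissipation $\norm{\psi}_{\dot{H}^{2s+1}_x}^2$ from the Laplacian inside $B$, close a linear Gr\"onwall/Duhamel inequality using the decaying bounds of Lemmas \ref{lem:superfluid mass estimate} and \ref{lem:energy + higher order energy estimates} (in particular \eqref{improved Y dissipative estimates}) with the time-splitting \eqref{splitting time integral}, and finally integrate for the $T$-independent dissipation bound. However, there is one step in your plan that fails as stated, and it is precisely the step where the paper's proof does something you did not anticipate. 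You propose to estimate $(-\Delta)^s$ of the coupling terms $\abs{u}^2\psi$ and $u\cdot\nabla\psi$ by the fractional Leibniz (Kato--Ponce) rule at the full order $2s=\tfrac52$. Kato--Ponce concentrates derivatives on one factor or the other, so one of the resulting terms carries $\tfrac52$ derivatives on $u$, e.g.\ $\norm{D^{5/2}u}_{L^{p_1}}\norm{\nabla\psi}_{L^{q_1}}$ with $\tfrac1{p_1}+\tfrac1{q_1}=\tfrac12$, hence $p_1\ge 2$. But the velocity is only controlled in $L^{\infty}_t H^1_x \cap L^2_t H^2_x$ (the $H^2_x$ bound coming from the $\norm{\Delta u}_{L^2_x}^2$ dissipation), and $H^2(\T^2)$ does not embed into any space with $\tfrac52$ derivatives at integrability $\ge 2$; no interpolation or Sobolev embedding recovers $D^{5/2}u$ from the a priori estimates. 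The absorption mechanism you describe ($\kappa\norm{\psi}_{\dot H^{7/2}_x}^2$ into the left side) only disposes of the \emph{other} Kato--Ponce term, where the derivatives fall on $\psi$; it cannot help with the term where they fall on $u$.

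The paper's proof avoids this by never placing more than $2s-1$ derivatives on the nonlinear products: using self-adjointness of the fractional Laplacian it rebalances the pairing as in \eqref{B psi highest order estimate preliminary},
\begin{equation*}
\Re\left\langle (-\Delta)^s \psi, (-\Delta)^s (B\psi) \right\rangle
= \Re\left\langle (-\Delta)^{s-\frac{1}{2}}(-\Delta)\psi, (-\Delta)^{s-\frac{1}{2}} (B\psi) \right\rangle ,
\end{equation*}
then applies Cauchy--Schwarz against the dissipation, so that the products $\abs{u}^2\psi$, $u\cdot\nabla\psi$, $\abs{\psi}^p\psi$ only ever appear in the $H^{2s-1}_x = H^{3/2}_x$ norm (see \eqref{highest order energy sobolev norms}). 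Since $H^{3/2}(\T^2)$ is an algebra and $u\in H^2_x\hookrightarrow H^{3/2}_x$, every factor is then within reach of \eqref{Y dissipative estimates}--\eqref{improved Y dissipative estimates}; this same rebalancing also tames the power nonlinearity $\abs{\psi}^p\psi$ that you flagged as the main obstacle, since only $\tfrac32$ derivatives of it are needed, giving $\norm{\abs{\psi}^p\psi}_{H^{3/2}_x}\lesssim \norm{\psi}_{H^2_x}^{p+1}$ as in \eqref{psi^p psi in highest norm}. With that single correction your plan goes through and matches the paper; two minor further remarks are that the resulting differential inequality is \emph{linear} in $W=\norm{\psi}_{\dot H^{2s}_x}^2$ (the coefficient being the integrable function $\norm{u}_{H^2_x}^2$), so a plain Gr\"onwall argument suffices and no bootstrap is needed, and that your heuristic for the rate is correct: the slowest-decaying forcing, coming from $\abs{\psi}^p\psi$, decays like $(1+S_0^{p/2}t)^{-2-\frac2p}$, one power faster than the source in the $Z$-estimate, which is what produces the claimed $(1+t)^{-2-\frac{2}{p}}$.
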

	The choice of $s = \frac{5}{4}$ is dictated by $u\in L^{\infty}_t H^{1}_x \cap L^2_t H^{2}_x \subset L^4_t H^{\frac{3}{2}}_x$. This inclusion is precisely what we need to control the term $\abs{u}^2 \psi$ in the coupling using the a priori estimates up to this point.
	\begin{proof}
		With $s=\frac{5}{4}$, we apply $(-\Delta)^s$ to ~\eqref{NLS}, to get
		\begin{equation} \label{highest order NLS}
			\partial_t (-\Delta)^s\psi + \lambda(-\Delta)^s(B\psi) = -\frac{1}{2i}\Delta(-\Delta)^s\psi + \frac{\mu}{i}(-\Delta)^s(\abs{\psi}^p \psi).
		\end{equation} 
		Just as in Sections ~\ref{energy estimate} and ~\ref{NLS higher order estimate}, we multiply by $(-\Delta)^s\Bar{\psi}$, and integrate the real part over $\T^2$. As a result, the second term on the LHS yields
		\begin{equation} \label{B psi highest order estimate preliminary}
			\begin{aligned}
				&\Re\int_{\T^2} (-\Delta)^s \Bar{\psi} (-\Delta)^s (B\psi) = \Re\left\langle (-\Delta)^s \psi, (-\Delta)^s (B\psi) \right\rangle \\
				&= \Re\left\langle (-\Delta)^{s+\frac{1}{2}} \psi, (-\Delta)^{s-\frac{1}{2}} (B\psi) \right\rangle = \Re\left\langle (-\Delta)^{s-\frac{1}{2}} (-\Delta)\psi, (-\Delta)^{s-\frac{1}{2}} (B\psi) \right\rangle.
			\end{aligned}
		\end{equation} 
		Using the self-adjoint property of the Laplacian allows us to conclude that the first term on the RHS of ~\eqref{highest order NLS} vanishes, since
		\begin{equation*}
			\Im\int_{\T^2} (-\Delta)^s \Bar{\psi} (-\Delta)^{s+1} \psi = \Im \left\langle (-\Delta)^s \psi, (-\Delta)^{s+1}\psi \right\rangle = \Im \left\langle (-\Delta)^{s+\frac{1}{2}}\psi, (-\Delta)^{s+\frac{1}{2}}\psi \right\rangle = 0.
		\end{equation*}
		For the second term on the RHS of ~\eqref{highest order NLS}, we have
		\begin{equation*}
			\begin{aligned}
				&\Im\int_{\T^2} (-\Delta)^s \Bar{\psi} (-\Delta)^{s} (\abs{\psi}^p\psi) = \left\langle (-\Delta)^s\psi,(-\Delta)^{s} (\abs{\psi}^p\psi) \right\rangle \\
				&= \left\langle (-\Delta)^{s+\frac{1}{2}}\psi,(-\Delta)^{s-\frac{1}{2}} (\abs{\psi}^p\psi) \right\rangle = \left\langle (-\Delta)^{s-\frac{1}{2}}(-\Delta)\psi,(-\Delta)^{s-\frac{1}{2}} (\abs{\psi}^p\psi) \right\rangle.
			\end{aligned}
		\end{equation*} 
		In the last expression of ~\eqref{B psi highest order estimate preliminary}, we expand the operator $B$ and use H\"older's inequality to arrive at
		\begin{equation} \label{highest order energy inequality}
			\begin{aligned}
				&\frac{d}{dt}\norm{(-\Delta)^s\psi}_{L^2_x}^2 + \frac{1}{C}\norm{(-\Delta)^{s-\frac{1}{2}}(-\Delta\psi)}_{L^2_x}^2 \\ 
				&\lesssim  \norm{(-\Delta)^{s-\frac{1}{2}}(\abs{u}^2\psi)}_{L^2_x}^2 + \norm{(-\Delta)^{s-\frac{1}{2}}(u\cdot\nabla\psi)}_{L^2_x}^2 + \norm{(-\Delta)^{s-\frac{1}{2}}(\abs{\psi}^p\psi)}_{L^2_x}^2.
			\end{aligned}
		\end{equation}
		Rewriting the LHS in terms of the homogeneous Sobolev norms and the RHS in terms of the usual Sobolev norms, we have
		\begin{equation} \label{highest order energy sobolev norms}
			\frac{d}{dt}\norm{\psi}_{\dot{H}^{2s}_x}^2 + \frac{1}{C}\norm{\psi}_{\dot{H}^{2s+1}_x}^2 \lesssim  \norm{\abs{u}^2\psi}_{H^{2s-1}_x}^2 + \norm{u\cdot\nabla\psi}_{H^{2s-1}_x}^2 + \norm{\abs{\psi}^p\psi}_{H^{2s-1}_x}^2.
		\end{equation}
		Since $2s-1 = \frac{3}{2}$, the algebra property of Sobolev norms is applicable. Using this, ~\eqref{superfluid mass bound} and ~\eqref{Z solution bound}, 
		we estimate the RHS of ~\eqref{highest order energy sobolev norms}. The first term requires interpolation\footnote{In \cite{Jayanti2022LocalSuperfluidity}, the high norm of the velocity was estimated using the Lions-Magenes lemma, which cannot be applied.} and yields
		\begin{equation} \label{u^2 psi in highest norm}
			\begin{aligned}
				\norm{\abs{u}^2\psi}_{H^{2s-1}_x}^2 &\lesssim \norm{u}_{H^{\frac{3}{2}}_x}^4 \norm{\psi}_{H^{\frac{3}{2}}_x}^2 \lesssim \norm{u}_{H^1_x}^2 \norm{u}_{H^2_x}^2 \norm{\psi}_{H^2_x}^2 \lesssim Z(t) \Big(S(t) + Z(t) \Big) \norm{u}_{H^2_x}^2 \\
				&\lesssim \left( Z_0 e^{-\frac{t}{C}} + \frac{S_0^{\frac{p}{2}+2}}{\left(1+S_0^{\frac{p}{2}}t\right)^{1+\frac{4}{p}}} \right) \norm{u}_{H^2_x}^2
				,
			\end{aligned}
		\end{equation}
		where we have retained only the terms that decay the slowest. In arriving at the last inequality in ~\eqref{u^2 psi in highest norm}, we use the fact that $Z_0<1$. For the second term in the RHS of ~\eqref{highest order energy sobolev norms}, we similarly obtain 
		\begin{equation} \label{u.grad psi in highest norm}
			\norm{u\cdot\nabla\psi}_{H^{2s-1}_x}^2 \lesssim \norm{u}_{H^{\frac{3}{2}}_x}^2 \norm{\psi}_{\dot{H}^{2s}_x}^2 \lesssim \norm{u}_{H^2_x}^2 \norm{\psi}_{\dot{H}^{2s}_x}^2.
		\end{equation}
		While the $H^{\frac{3}{2}}_x$ norm could have been interpolated between $H^1_x$ and $H^2_x$, it does not provide an improved estimate since $\norm{u}_{L^2_t H^1_x}$ and $\norm{u}_{L^2_t H^2_x}$ are both bounded by ~\eqref{Y dissipative estimates}. In the last term of ~\eqref{highest order energy sobolev norms}, in view of Remark~\ref{restricting p>=1}, we have
		\begin{equation} \label{psi^p psi in highest norm}
			\begin{aligned}
				\norm{\abs{\psi}^p\psi}_{H^{2s-1}_x}^2 &\lesssim \norm{\psi}_{H^2_x}^{2p+2} \lesssim \left( \norm{\psi}_{L^2_x}^2 + \norm{\Delta\psi}_{L^2_x}^2 \right)^{p+1} \\
				&\lesssim \frac{S_0^{p+1}}{\left(1+S_0^{\frac{p}{2}} t\right)^{2+\frac{2}{p}}} + Z_0^{p+1} e^{-(p+1)\frac{t}{C}} + \frac{S_0^{(\frac{p}{2}+1)(p+1)}}{\left(1+S_0^{\frac{p}{2}} t\right)^{3+\frac{2}{p}+p}} \\ 
				&\lesssim Z_0 e^{-\frac{t}{C}} + \frac{S_0^{p+1}}{\left(1+S_0^{\frac{p}{2}} t\right)^{2+\frac{2}{p}}},
			\end{aligned}
		\end{equation} 
		where the penultimate inequality is obtained using ~\eqref{superfluid mass bound} and ~\eqref{Z solution bound}. Therefore, ~\eqref{highest order energy sobolev norms} becomes
		\begin{equation} \label{highest order energy LHS inhomogenous}
			\begin{aligned}
				\frac{d}{dt}\norm{\psi}_{\dot{H}^{2s}_x}^2 + \frac{1}{C}\norm{\psi}_{\dot{H}^{2s+1}_x}^2 &\lesssim  \norm{u}_{H^2_x}^2\norm{\psi}_{\dot{H}^{2s}_x}^2 + \left( Z_0 e^{-\frac{t}{C}} + \frac{S_0^{\frac{p}{2}+2}}{\left(1+S_0^{\frac{p}{2}}t\right)^{1+\frac{4}{p}}} \right) \norm{u}_{H^2_x}^2 \\ 
				&\quad + Z_0 e^{-\frac{t}{C}} + \frac{S_0^{p+1}}{\left(1+S_0^{\frac{p}{2}} t\right)^{2+\frac{2}{p}}}.
			\end{aligned}
		\end{equation}
		With the Poincar\'e inequality, we replace the dissipative term on the LHS with $W(t) := \norm{\psi(t)}_{\dot{H}^{2s}_x}^2$, which yields 
		\begin{equation} \label{highest order energy poincare gronwall}
			\begin{aligned}
				\frac{dW}{dt} + \frac{1}{C}W &\lesssim  \norm{u}_{H^2_x}^2 W + \left( Z_0 e^{-\frac{t}{C}} + \frac{S_0^{\frac{p}{2}+2}}{\left(1+S_0^{\frac{p}{2}}t\right)^{1+\frac{4}{p}}} \right) \norm{u}_{H^2_x}^2 \\ 
				&\quad + Z_0 e^{-\frac{t}{C}} + \frac{S_0^{p+1}}{\left(1+S_0^{\frac{p}{2}} t\right)^{2+\frac{2}{p}}} ,
			\end{aligned}
		\end{equation}
		whose solution obeys, using the Gr\"onwall inequality, 
		\begin{equation*} \label{homogeneous high norm bound 0}
			\begin{aligned}
				W(t) &\le e^{C\norm{u}_{L^2_{[0,T]} H^2_x}^2} W_0 e^{-\frac{t}{C}} \\ 
				&\quad + Ce^{C\norm{u}_{L^2_{[0,T]} H^2_x}^2} \int_0^t e^{-\frac{1}{C}(t-s)} \left( Z_0 e^{-\frac{s}{C}} + \frac{S_0^{\frac{p}{2}+2}}{\left(1+S_0^{\frac{p}{2}}s\right)^{1+\frac{4}{p}}} \right) \norm{u}_{H^2_x}^2 ds \\
				&\quad + Ce^{C\norm{u}_{L^2_{[0,T]} H^2_x}^2} \int_0^t e^{-\frac{1}{C}(t-s)} \left( Z_0 e^{-\frac{s}{C}} + \frac{S_0^{p+1}}{\left(1+S_0^{\frac{p}{2}} s\right)^{2+\frac{2}{p}}} \right) ds ,
			\end{aligned}
		\end{equation*}
		where $W_0 := \norm{\psi_0}_{\dot{H}^{2s}_x}^2$. We employ calculations similar to ~\eqref{splitting time integral} to estimate the integrals, i.e., splitting them over $\left[0,\frac{t}{2}\right]$ and $\left[\frac{t}{2},t\right]$. We also use ~\eqref{Y dissipative estimates}, in particular $\norm{u}_{L^2_{[0,T]} H^2_x}^2 \lesssim Z_0 + S_0^{p+1} \lesssim 1$, to simplify the exponential factors outside the integrals. In all, we end up with
		\begin{equation} \label{homogeneous high norm bound}
			\begin{aligned}
				W(t) &\lesssim W_0 e^{-\frac{t}{C}} + Z_0 \norm{u}_{L^2_{[0,t]} H^2_x}^2 e^{-\frac{t}{C}} + S_0^{\frac{p}{2}+2} \norm{u}_{L^2_{[0,\frac{t}{2}]} H^2_x}^2 e^{-\frac{t}{2C}} + \frac{S_0^{\frac{p}{2}+2}}{\left(1+S_0^{\frac{p}{2}}t\right)^{1+\frac{4}{p}}} \norm{u}_{L^2_{[\frac{t}{2},t]} H^2_x}^2 \\
				&\quad + Z_0 e^{-\frac{t}{2C}} + S_0^{p+1} \frac{t}{2} e^{-\frac{t}{2C}} + \frac{S_0^{p+1}}{\left(1+S_0^{\frac{p}{2}} t\right)^{2+\frac{2}{p}}} ,
			\end{aligned}
		\end{equation}
		for all $t\in[0,T]$. We simplify further by making use of ~\eqref{Y dissipative estimates} and ~\eqref{improved Y dissipative estimates} for $\norm{u}_{L^2_{[0,\frac{t}{2}]} H^2_x}^2$ and $\norm{u}_{L^2_{[\frac{t}{2},t]} H^2_x}^2$, respectively. This leads to
		\begin{equation} \label{homogeneous high norm bound 2}
			\begin{aligned}	
				\norm{\psi(t)}_{\dot{H}^{2s}_x}^2 &\lesssim W_0 e^{-\frac{t}{C}} + \left(Z_0 + S_0^{\frac{p}{2}+2}\right) \left(Z_0 + S_0^{p+1}\right) e^{-\frac{t}{2C}} \\ 
				&\quad + \frac{S_0^{\frac{p}{2}+2}}{\left(1+S_0^{\frac{p}{2}}t\right)^{1+\frac{4}{p}}} \left( Z_0 e^{-\frac{t}{C}} + \frac{S_0^{\frac{p}{2}+1}}{\left(1+S_0^{\frac{p}{2}}t \right)^{\frac{2}{p}}} \right) \\
				&\quad + Z_0 e^{-\frac{t}{2C}} + \frac{S_0^{p+1}}{\left(1+S_0^{\frac{p}{2}} t\right)^{2+\frac{2}{p}}} \\
				&\lesssim \left(W_0 + Z_0 + S_0^{\frac{3p}{2}+3} \right) e^{-\frac{t}{2C}} + \frac{S_0^{p+3}}{\left(1+S_0^{\frac{p}{2}} t\right)^{1+\frac{6}{p}}} + \frac{S_0^{p+1}}{\left(1+S_0^{\frac{p}{2}} t\right)^{2+\frac{2}{p}}} .
			\end{aligned}
		\end{equation}
		We use ~\eqref{homogeneous high norm bound 2} in ~\eqref{highest order energy LHS inhomogenous} and integrate over $[0,T]$ to obtain the final dissipative estimate
		\begin{equation} \label{final dissipative bound}
			\begin{aligned}
				\norm{\psi}_{L^2_{[0,T]} \dot{H}^{2s+1}_x}^2 &\lesssim W_0 + \left( W_0+Z_0+S_0^{p+1} \right) \norm{u}_{L^2_{[0,T]} H^2_x}^2 + \left( Z_0 + S_0^{\frac{p}{2}+2} \right) \norm{u}_{L^2_{[0,T]} H^2_x}^2 \\
				&\quad + Z_0 \left( 1-e^{-\frac{T}{C}} \right) + S_0^{\frac{p}{2}+1}\left(1- \left(1+S_0^{\frac{p}{2}} T\right)^{-1-\frac{2}{p}} \right) \\
				&\lesssim W_0+Z_0+S_0^{\frac{p}{2}+1}.
			\end{aligned}
		\end{equation} 
		This shows that with small enough data one can achieve an arbitrarily small value (independent of $T$) for this highest-order dissipation.
		
		Similarly to ~\eqref{improved Y dissipative estimates}, it is possible to also get a time-decaying estimate by integrating ~\eqref{highest order energy LHS inhomogenous} over the time interval $[t,2t]$ for $t\ge 1$. This leads to
		\begin{equation} \label{improved highest dissipative estimate}
			\begin{aligned}
				\norm{\psi}_{L^2_{[t,2t]} \dot{H}^{2s+1}_x}^2 &\lesssim \norm{\psi(t)}_{\dot{H}^{2s}_x}^2 + \left( \norm{\psi(t)}_{\dot{H}^{2s}_x}^2 + Z_0 e^{-\frac{t}{C}} + \frac{S_0^{\frac{p}{2}+2}}{\left(1+S_0^{\frac{p}{2}}t\right)^{1+\frac{4}{p}}} \right) \norm{u}_{L^2_{[t,2t]} \dot{H}^2_x}^2 \\ 
				&\quad + \int_t^{2t} Z_0 e^{-\frac{s}{C}} + \frac{S_0^{p+1}}{\left(1+S_0^{\frac{p}{2}} s\right)^{2+\frac{2}{p}}} ds \\
				&\lesssim \left(W_0 + Z_0 \right) e^{-\frac{t}{2C}} +  \frac{S_0^{\frac{p}{2}+1}}{\left(1+S_0^{\frac{p}{2}}t\right)^{1+\frac{2}{p}}},
			\end{aligned}
		\end{equation}
		where we have used ~\eqref{improved Y dissipative estimates} and ~\eqref{homogeneous high norm bound 2}, and retained only the slowest decaying terms.
	\end{proof}
	
	The high-norm control in ~\eqref{final dissipative bound} and ~\eqref{improved highest dissipative estimate} is important because the inequalities can be translated into the desired bounds (on two fewer derivatives) for $B\psi$. Indeed,
	\begin{equation} \label{splitting Bpsi into low and high norms}
		\begin{aligned}
			\norm{B\psi}_{H^{2s-1}_x}^2 &\lesssim \norm{B\psi}_{L^2_x}^2 + \norm{B\psi}_{\dot{H}^{2s-1}_x}^2 \\
			&\lesssim \norm{B\psi}_{L^2_x}^2 + \norm{\psi}_{\dot{H}^{2s+1}_x}^2 + \norm{\abs{u}^2\psi}_{H^{2s-1}_x}^2 + \norm{u\cdot\nabla\psi}_{H^{2s-1}_x}^2 + \norm{\abs{\psi}^p\psi}_{H^{2s-1}_x}^2 ,
		\end{aligned}
	\end{equation}
	where for the last three terms, we replaced the homogeneous Sobolev norms by the larger inhomogeneous norms. 
	Combining the analysis in ~\eqref{u^2 psi in highest norm}--\eqref{psi^p psi in highest norm} with ~\eqref{superfluid mass bound}, ~\eqref{Y dissipative estimates}, ~\eqref{final dissipative bound}, and ~\eqref{splitting Bpsi into low and high norms}, we get the sought-after \emph{dissipation bound}
	\begin{equation} \label{Bpsi sought-after bound}
		\norm{B\psi}_{L^2_{[0,T]} H^{2s-1}_x}^2 \lesssim W_0 + Z_0 +  S_0^{\frac{p}{2}+1} .
	\end{equation}
	Since $2s-1 = \frac{3}{2}$, Sobolev embedding allows us to conclude that $B\psi$ is bounded in $L^2_{[0,T]}L^{\infty}_x$. Similarly, integrating ~\eqref{splitting Bpsi into low and high norms} over $[t,2t]$ for $t\ge 1$, we get
	\begin{equation} \label{improved Bpsi sought-after bound}
		\norm{B\psi}_{L^2_{[t,2t]} H^{2s-1}_x}^2 \lesssim \left(W_0 + Z_0 \right) e^{-\frac{t}{2C}} + \frac{S_0^{\frac{p}{2}+1}}{\left(1+S_0^{\frac{p}{2}}t\right)^{\frac{2}{p}}}.
	\end{equation}
	The estimates in ~\eqref{Bpsi sought-after bound} and ~\eqref{improved Bpsi sought-after bound} are used to ensure that the density remains bounded from below.
	
	\subsection{Ensuring positive density} \label{ensuring positive density}
	We now have all the a priori estimates to return to ~\eqref{constraint to choose existence time}. For it to hold true, a sufficient condition is
	\begin{equation} \label{constraint to choose existence time 3}
		\left(\norm{\psi}_{L^2_{[0,T]} L^2_x} + \norm{\Delta \psi}_{L^2_{[0,T]} L^2_x} \right) \norm{B\psi}_{L^2_{[0,T]} H^{\frac{3}{2}}_x}  \lesssim \mi-\mf.
	\end{equation}
	Depending on the value of $p$, we now divide the analysis into several cases: $1\le p<2$, $p=2$, $2<p<4$, $p=4$, and $p>4$. 
	
	\subsubsection{The case $1\le p<2$} \label{case 1<p<2}
	Owing to the Poincar\'e inequality and ~\eqref{Y dissipative estimates}, we have
	\begin{equation} \label{Delta psi in L^2_tL^2_x}
		\norm{\Delta \psi}_{L^2_{[0,T]} L^2_x}^2 \lesssim \norm{D^3 \psi}_{L^2_{[0,T]} L^2_x}^2 \lesssim Z_0 + S_0^{p+1} ,
	\end{equation}
	and this bound holds for all $p\ge 1$. For the first term of ~\eqref{constraint to choose existence time 3}, we integrate ~\eqref{superfluid mass bound}, yielding
	\begin{equation} \label{psi in L^2_t L^2_x}
		\norm{\psi}_{L^2_{[0,T]} L^2_x}^2 \lesssim \frac{S_0^{1-\frac{p}{2}}}{2-p} \left(1- \frac{1}{\left(1+S_0^{\frac{p}{2}} T\right)^{\frac{2}{p}-1}}\right) \lesssim S_0^{1-\frac{p}{2}} ,
	\end{equation}
	since $\frac{2}{p} > 1$. From ~\eqref{Bpsi sought-after bound}, ~\eqref{Delta psi in L^2_tL^2_x}, and ~\eqref{psi in L^2_t L^2_x}, we conclude that the condition in ~\eqref{constraint to choose existence time 3} can be achieved if $W_0 + Z_0 + S_0^{1-\frac{p}{2}}$ is sufficiently small. Thus, the density satisfies $\mf\le \rho \le \Mi+\mi-\mf$ for all $T>0$, as long as the initial data are small enough. 
	
	For $p\ge 2$, the integral of the superfluid mass, i.e., $\norm{\psi(t)}_{L^2_x}^2$, cannot be bounded uniformly in $[0,T]$. This is where the decaying estimates in ~\eqref{improved Y dissipative estimates} and ~\eqref{improved Bpsi sought-after bound} prove to be useful.

	\subsubsection{The case $p=2$} \label{case p=2}
	We split the time integral in ~\eqref{constraint to choose existence time 3} over the ranges $0\le t\le 1$ (short-time) and $t\ge 1$ (long-time). We start with the long-time estimate the LHS of ~\eqref{constraint to choose existence time 3} with $p=2$. For the first term, we have
	\begin{equation} \label{long-time psi in L^2 L^2, p=2}
		\int_t^{2t} \norm{\psi}_{L^2_x}^2 \lesssim \int_t^{2t} \frac{S_0}{1+S_0 t} \lesssim \log{\left(\frac{1+2S_0 t}{1+S_0 t}\right)} \lesssim \log{2} .
	\end{equation}
	Using the Poincar\'e inequality and ~\eqref{improved Y dissipative estimates} gives
	\begin{equation} \label{long-time Delta psi in L^2 L^2, p=2}
		\int_t^{2t} \norm{\Delta\psi}_{L^2_x}^2 \lesssim \int_t^{2t} \norm{D^3\psi}_{L^2_x}^2 \lesssim Z_0 e^{-\frac{t}{C}} + \frac{S_0^2}{1+S_0 t}.
	\end{equation}
	From ~\eqref{improved Bpsi sought-after bound}, ~\eqref{long-time psi in L^2 L^2, p=2}, and ~\eqref{long-time Delta psi in L^2 L^2, p=2}, we obtain
	\begin{equation*}
		\begin{aligned}
			I(t) := \int_t^{2t} \norm{\psi}_{L^{\infty}_x} \norm{B\psi}_{L^{\infty}_x} &\lesssim \left(\norm{\psi}_{L^2_{[t,2t]} L^2_x} + \norm{\Delta \psi}_{L^2_{[t,2t]} L^2_x} \right) \norm{B\psi}_{L^2_{[t,2t]} H^{\frac{3}{2}}_x} \\ 
			&\lesssim \left( \log{2} + Z_0 e^{-\frac{t}{C}} +  \frac{S_0^2}{1+S_0 t} \right)^{\frac{1}{2}} \left( \left(W_0 + Z_0 \right) e^{-\frac{t}{2C}} + \frac{S_0^2}{1+S_0 t} \right)^{\frac{1}{2}} \\
			&\lesssim \left(W_0 + Z_0 \right)^{\frac{1}{2}} e^{-\frac{t}{4C}} + \frac{S_0}{(1+S_0 t)^{\frac{1}{2}}} \\
			&\lesssim \frac{(W_0+Z_0)^{\frac{1}{2}}+S_0^{\frac{1}{2}}}{t^{\frac{1}{2}}}.
		\end{aligned}
	\end{equation*}
	This leads us to
	\begin{equation} \label{long-time control of constraint p=2}
		\int_1^{2^{N+1}} \norm{\psi}_{L^{\infty}_x} \norm{B\psi}_{L^{\infty}_x} = \sum_{k=0}^N I(2^k) \lesssim (W_0+Z_0+S_0)^{\frac{1}{2}} \sum_{k=0}^N \frac{1}{(2^k)^{\frac{1}{2}}} \lesssim (W_0+Z_0+S_0)^{\frac{1}{2}} ,
	\end{equation}
	which is the long-time contribution (independent of $N$) of the constraint in ~\eqref{constraint to choose existence time 3}. It can be made as small as required with an appropriate choice of $W_0+Z_0+S_0$.
	
	Finally, we verify the short-time control as well. The superfluid mass bound in ~\eqref{superfluid mass bound} means that
	\begin{equation} \label{short-time psi in L^2 L^2, p=2}
		\int_0^1 \norm{\psi}_{L^2_x}^2 \lesssim \int_0^1 \frac{S_0}{1+S_0 t} \lesssim \log{(1+S_0)}.
	\end{equation}
	Similarly, using ~\eqref{Y dissipative estimates}, we get
	\begin{equation} \label{short-time Delta psi in L^2 L^2, p=2}
		\int_0^1 \norm{\Delta\psi}_{L^2_x}^2 \lesssim \int_0^1 \norm{D^3\psi}_{L^2_x}^2 \lesssim Z_0 + S_0^3.
	\end{equation}
	From ~\eqref{Bpsi sought-after bound}, ~\eqref{short-time psi in L^2 L^2, p=2}, and ~\eqref{short-time Delta psi in L^2 L^2, p=2}, we have
	\begin{equation} \label{short-time LHS of constraint, p=2}
		\int_0^1 \norm{\psi}_{L^{\infty}_x} \norm{B\psi}_{L^{\infty}_x} \lesssim \left( \log{(1+S_0)} + Z_0 + S_0^3 \right)^{\frac{1}{2}} (W_0 + Z_0 + S_0^2)^{\frac{1}{2}} ,
	\end{equation}
	which can be made small enough to satisfy ~\eqref{constraint to choose existence time 3}. This lets us conclude that the density is bounded from below uniformly in time, for the case $p=2$. Thus, we have the necessary global bound.

	\subsubsection{The case $2<p<4$} \label{case 2<p<4}
	We begin, once again, with the long-time analysis, i.e, for $t\ge 1$. From ~\eqref{superfluid mass bound}, we have
	\begin{equation} \label{improved psi in L^2 L^2}
		\norm{\psi}_{L^2_{[t,2t]} L^2_x}^2 \lesssim \frac{S_0^{1-\frac{p}{2}}}{\left(1+S_0^{\frac{p}{2}} t\right)^{\frac{2}{p}-1}}.
	\end{equation}
	Using the Poincar\'e inequality and ~\eqref{improved Y dissipative estimates},
	\begin{equation} \label{improved Delta psi in L^2 L^2}
		\norm{\Delta\psi}_{L^2_{[t,2t]} L^2_x}^2 \lesssim \norm{D^3\psi}_{L^2_{[t,2t]} L^2_x}^2 \lesssim Z_0 e^{-\frac{t}{C}} + \frac{S_0^{\frac{p}{2}+1}}{\left(1+S_0^{\frac{p}{2}}t\right)^{\frac{2}{p}}}.
	\end{equation}
	Combining ~\eqref{improved Bpsi sought-after bound}, ~\eqref{improved psi in L^2 L^2} and ~\eqref{improved Delta psi in L^2 L^2}, we have
	\begin{equation}
		\begin{aligned}
			I(t) &\lesssim \left( \frac{S_0^{\frac{1}{2}-\frac{p}{4}}}{\left(1+S_0^{\frac{p}{2}} t\right)^{\frac{1}{p}-\frac{1}{2}}} + Z_0^{\frac{1}{2}} e^{-\frac{t}{2C}} +  \frac{S_0^{\frac{p}{4}+\frac{1}{2}}}{\left(1+S_0^{\frac{p}{2}}t\right)^{\frac{1}{p}}} \right) \left( \left(W_0 + Z_0 \right)^{\frac{1}{2}} e^{-\frac{t}{4C}} + \frac{S_0^{\frac{p}{4}+\frac{1}{2}}}{\left(1+S_0^{\frac{p}{2}}t\right)^{\frac{1}{p}}} \right) \\
			&\lesssim \left( \frac{1}{ t^{\frac{1}{p}-\frac{1}{2}}} + Z_0^{\frac{1}{2}} e^{-\frac{t}{2C}} +  \frac{S_0^{\frac{p}{4}}}{t^{\frac{1}{p}}} \right) \left( \left(W_0 + Z_0 \right)^{\frac{1}{2}} e^{-\frac{t}{4C}} + \frac{S_0^{\frac{p}{4}}}{t^{\frac{1}{p}}} \right) \\
			&\lesssim \frac{(W_0+Z_0)^{\frac{1}{2}}+S_0^{\frac{p}{4}}}{t^{\frac{2}{p}-\frac{1}{2}}}.
		\end{aligned}
	\end{equation}
	Once again, the slowest decaying term is the dominant one. 
	Therefore, we have
	\begin{equation} \label{long-time control of constraint}
		\int_1^{2^{N+1}} \norm{\psi}_{L^{\infty}_x} \norm{B\psi}_{L^{\infty}_x} = \sum_{k=0}^N I(2^k) \lesssim \left((W_0+Z_0)^{\frac{1}{2}}+S_0^{\frac{p}{4}}\right) \sum_{k=0}^N \frac{1}{(2^k)^{\frac{2}{p}-\frac{1}{2}}} \lesssim \left(W_0+Z_0+S_0^{\frac{p}{2}}\right)^{\frac{1}{2}}.
	\end{equation}
	The sum converges (uniformly in $N$) because $p<4$. Hence, we obtain good long-time control of the LHS of ~\eqref{constraint to choose existence time 3} for $2<p<4$. 
	
	What remains is to check that we also maintain short-time control. To this end, we have from ~\eqref{superfluid mass bound},
	\begin{equation} \label{short-time psi in L^2 L^2, 2<p<4}
		\int_0^1 \norm{\psi}_{L^2_x}^2 \lesssim \int_0^1 \frac{S_0}{\left(1+S_0^{\frac{p}{2}} t\right)^{\frac{2}{p}}} \lesssim S_0^{1-\frac{p}{2}} \left( (1+S_0^{\frac{p}{2}})^{1-\frac{2}{p}} -1 \right) ,
	\end{equation}
	and from ~\eqref{Y dissipative estimates},
	\begin{equation} \label{short-time Delta psi in L^2 L^2, 2<p<4}
		\int_0^1 \norm{\Delta\psi}_{L^2_x}^2 \lesssim \int_0^1 \norm{\Delta\psi}_{L^2_x}^2 \lesssim Z_0 + S_0^{p+1} .
	\end{equation}
	Combining ~\eqref{Bpsi sought-after bound} with ~\eqref{short-time psi in L^2 L^2, 2<p<4} and ~\eqref{short-time Delta psi in L^2 L^2, 2<p<4} yields 
	\begin{equation*}
		\begin{aligned}
			\int_0^1 \norm{\psi}_{L^{\infty}_x} \norm{B\psi}_{L^{\infty}_x} &\lesssim \left( S_0^{\frac{1}{2}-\frac{p}{4}} \left( \left(1+S_0^{\frac{p}{2}}\right)^{1-\frac{2}{p}} - 1 \right)^{\frac{1}{2}} + Z_0^{\frac{1}{2}} + S_0^{\frac{p+1}{2}} \right) \left( (W_0 + Z_0)^{\frac{1}{2}} + S_0^{\frac{p}{4}+\frac{1}{2}} \right) \\
			&\le C\left( S_0^{\frac{1}{2}-\frac{p}{4}} \left( 1 + CS_0^{\frac{p}{2}-1} - 1 \right)^{\frac{1}{2}} + Z_0^{\frac{1}{2}} + S_0^{\frac{p+1}{2}} \right) \left( (W_0 + Z_0)^{\frac{1}{2}} + S_0^{\frac{p}{4}+\frac{1}{2}} \right) \\
			&\le C\left( 1 + Z_0^{\frac{1}{2}} + S_0^{\frac{p+1}{2}} \right) \left( (W_0 + Z_0)^{\frac{1}{2}} + S_0^{\frac{p}{4}+\frac{1}{2}} \right) \\
			&\lesssim \left(W_0 + Z_0 + S_0^{\frac{p}{2}+1} \right)^{\frac{1}{2}},
		\end{aligned}
	\end{equation*}
	which is the short-time control we are seeking. This implies global solutions, since the density is bounded from below uniformly in time.

	\subsubsection{The case $p\ge 4$} \label{case p>=4}
	The arguments for short-time control in Section ~\ref{case 2<p<4} remain valid even for $p\ge 4$. However, the long-time estimates breaks down. Specifically, the geometric series in ~\eqref{long-time control of constraint} diverges. We see that for $T=2^{N+1}$, 
	\begin{equation} \label{long-time loss of control}
		\int_1^T \norm{\psi}_{L^{\infty}_x} \norm{B\psi}_{L^{\infty}_x} \lesssim \sum_{k=0}^N \frac{W_0^{\frac{1}{2}}+Z_0^{\frac{1}{2}}+S_0^{\frac{p}{4}}}{(2^k)^{\frac{2}{p}-\frac{1}{2}}} \lesssim \left\{ \begin{aligned}
			& \left(W_0+Z_0+S_0^2 \right)^{\frac{1}{2}} \log{T} &, \quad p=4 \\
			& \left(W_0+Z_0+S_0^{\frac{p}{2}} \right)^{\frac{1}{2}} T^{\frac{p-4}{2p}} &, \quad p>4.
		\end{aligned} \right.
	\end{equation}
	Therefore, in this scenario, global-in-time estimates elude us due to the logarithmic/polynomial dependence on $T$. We can, however, guarantee almost global existence of solutions. Given a set of system parameters, we can ensure that $\rho\ge \mf$ for any finite time $T>0$ as long as we start from small enough initial data (depending on $T$). In other words, if the size of the data is $\veps$, then we have $T\sim e^{\frac{1}{\sqrt{\veps}}}$ for $p=4$ and $T\sim \veps^{-\frac{p}{p-4}}$ for $p>4$. This is the scaling expressed in Theorem ~\ref{almost global existence}.

	\section{Existence of weak solutions (Proof of Theorems ~\ref{global existence} and ~\ref{almost global existence})} \label{existence proof}
	Having derived the required a priori estimates, we now establish the existence of a weak solution for a truncated form of the governing equations, and then pass to the limit.
	
	\subsection{Constructing the semi-Galerkin scheme}
	The finite-dimensional wavefunction and velocity are constructed using eigenfunctions of the Laplacian and the Leray-projected Laplacian, respectively. 
	
	\subsubsection{The approximate wavefunction} \label{Constructing the wavefunction and the Dirichlet penta-Laplacian}
	Consider the negative Laplacian $-\Delta$ on the torus $\T^2$, with the domain $D(-\Delta) = H^2$. It has a discrete set of non-negative and non-decreasing eigenvalues $\{\beta_j\}$, and the corresponding eigenfunctions $\{b_j\}\in C^{\infty}(\T^2)$ can be chosen to be orthonormal in $L^2_x$ and orthogonal in $H^1_x$. We define the approximate wavefunction as
	\begin{equation} \label{truncated wavefunction definition}
		\psi^N (t,x) = \sum_{k=0}^N d^N_k(t) b_k(x)
		,
	\end{equation}
	for $N\in\N\cup\{0\}$ and $d^N_k(t)\in\C$.
	
	\subsubsection{The approximate velocity}
	We consider the Leray-projected Laplacian (or Stokes operator) $A = -\Leray\Delta$ with the domain $D(A) = L^2_{\text{d}} \cap H^2$ (see \cite[Chapter 2]{Robinson2016TheEquations}, for instance).
	
	The Stokes operator (like the Laplacian) has a discrete set of non-negative and non-decreasing eigenvalues $\{\alpha_j\}$, and the corresponding divergence-free, vector-valued eigenfunctions $\{a_j\}\in C^{\infty}(\T^2)$ can be chosen to be orthonormal in $L^2_{\text{d},x}$ and orthogonal in $H^1_x$. We define the approximate velocity as
	\begin{equation} \label{truncated velocity definition}
		u^N (t,x) = \sum_{k=0}^N c^N_k(t) a_k(x)
		,
	\end{equation}
	for $N\in\N\cup\{0\}$ and $c^N_k(t)\in\R$.

	\subsection{The initial conditions} \label{the initial conditions}
	
	\subsubsection{The initial wavefunction and initial velocity}
	We begin by defining $P^N$ (respectively, $Q^N$) to be the projections onto the space spanned by the first $N+1$ eigenfunctions of $A$ (respectively, $-\Delta$). Then, we truncate the initial conditions for the velocity and wavefunction accordingly:
	\begin{equation} \label{truncated initial conditions velocity and wavefunction}
		u_0^N := P^N u_0 \quad , \quad \psi_0^N := Q^N \psi_0 .
	\end{equation}
	Since $u_0 \in H^{1}_{\text{d}}(\T^2)$ and $\psi_0 \in H^{\frac{5}{2}}(\T^2)$, it is necessary to establish that the truncated initial conditions converge to the actual ones in the relevant norms.
	\begin{lem} [The projections $Q^N$ and $P^N$ are convergent] \label{Q^N and P^N are convergent}
		If $\psi \in H^r_x$ and $u \in H^s_{\text{d},x}$ for any $0<r,s<\infty$, then 
		\begin{enumerate}
			\item $\norm{Q^N \psi}_{H^r_x} \lesssim \norm{\psi}_{H^r_x}$ and $Q^N \psi \xrightarrow[N\rightarrow\infty]{H^r}\psi$, and
			
			\item $\norm{P^N u}_{H^s_{x}} \lesssim \norm{u}_{H^s_{x}}$ and $P^N u \xrightarrow[N\rightarrow\infty]{H^s}u$ .
		\end{enumerate}
	\end{lem}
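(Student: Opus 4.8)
The plan is to reduce both statements to the functional calculus of the generating operators, exploiting that $Q^N$ and $P^N$ are precisely the spectral projections onto the lowest $N+1$ eigenspaces of $-\Delta$ and $A$, respectively. The single fact I would establish first is the equivalence of the Sobolev norm with the spectral norm: on $\T^2$, for every $r>0$,
\[
\norm{\psi}_{H^r_x} \sim \norm{(I-\Delta)^{r/2}\psi}_{L^2_x},
\]
which follows from the Fourier characterization $\norm{\psi}_{H^r_x}^2 \sim \sum_k (1+\beta_k)^r \lvert\langle b_k,\psi\rangle\rvert^2$, since the eigenfunctions $\{b_k\}$ of $-\Delta$ span the same eigenspaces as the Fourier modes and form a complete orthonormal system in $L^2_x$. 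The value of this reformulation is that $(I-\Delta)^{r/2}$ and $Q^N$ are both functions of $-\Delta$, hence commute, while $Q^N$ is an orthogonal projection in $L^2_x$ and so a contraction there.

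With this in hand, the uniform bound in part (1) is a one-line computation,
\[
\norm{Q^N\psi}_{H^r_x} \sim \norm{(I-\Delta)^{r/2}Q^N\psi}_{L^2_x} = \norm{Q^N (I-\Delta)^{r/2}\psi}_{L^2_x} \le \norm{(I-\Delta)^{r/2}\psi}_{L^2_x} \sim \norm{\psi}_{H^r_x},
\]
using commutation and $L^2_x$-contractivity. For the convergence I would set $f := (I-\Delta)^{r/2}\psi \in L^2_x$ and observe
\[
\norm{Q^N\psi - \psi}_{H^r_x} \sim \norm{(I-\Delta)^{r/2}(Q^N-I)\psi}_{L^2_x} = \norm{(Q^N-I)f}_{L^2_x} \xrightarrow[N\to\infty]{} 0,
\]
the limit being exactly the $L^2_x$-convergence of the eigenfunction expansion of $f$, valid because $\{b_k\}$ is complete.

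For part (2) the only additional ingredient is to identify the Stokes operator on divergence-free fields: on $\T^2$ the Leray projector $\Leray$ is a Fourier multiplier commuting with $-\Delta$, so for $u\in L^2_{\text{d},x}$ one has $Au = -\Leray\Delta u = -\Delta u$, whence $\norm{(I+A)^{r/2}u}_{L^2_x} = \norm{(I-\Delta)^{r/2}u}_{L^2_x} \sim \norm{u}_{H^r_x}$. Since $\{a_k\}$ is a complete orthonormal basis of $L^2_{\text{d},x}$, the projection $P^N$ is the spectral projection of $A$, and the identical three-step argument—commutation with $(I+A)^{r/2}$, contractivity in $L^2_{\text{d},x}$, and completeness—yields both the uniform bound and the convergence. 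The main obstacle, and essentially the only point requiring genuine care, is justifying the norm equivalence for \emph{non-integer} $r$ (matching the completion definition of $H^r_x$ with the spectral norm) together with the identification $A=-\Delta$ on $L^2_{\text{d},x}$; once these are pinned down, everything else is formal.
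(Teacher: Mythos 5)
Your proposal is correct and takes essentially the same route as the paper, whose entire proof is a one-line appeal to the equivalence of Sobolev norms with norms given by fractional powers of the negative Laplacian and the Stokes operator (Theorem 2.27 of the Robinson--Rodrigo--Sadowski reference cited there). You have merely filled in the details that this equivalence makes formal---commutation of the spectral projections with $(I-\Delta)^{r/2}$ (resp. $(I+A)^{r/2}$), $L^2_x$-contractivity of orthogonal projections, and completeness of the eigenbases---so there is no substantive difference in approach.
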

	\noindent The proof utilizes the equivalence of norms between Sobolev spaces and fractional powers of the negative Laplacian/Stokes operator (see Theorem 2.27 in \cite{Robinson2016TheEquations}). Given the regularity of $\psi_0$ and $u_0$, we deduce the convergence of the approximate initial conditions by applying Lemma ~\ref{Q^N and P^N are convergent}.

	\subsubsection{The initial density} \label{the initial density}
	Given the initial density field $\rho_0 \in L^{\infty}_x \subset L^r_x$ for $1\le r<\infty$, we consider an approximating sequence $\rho^N_0 \in C^1_x$, such that $\rho_0^N \xrightarrow[N\rightarrow\infty]{L^r}\rho$, and $\mi\le \rho_0^N \le \Mi$. (Recall that $\mi\le \rho_0 \le \Mi$.) This approximating sequence may be constructed by mollification.
	
	

	\subsection{Approximate equations}
	
	\subsubsection{The continuity equation}
	
	Having described the (approximate) initial conditions and the semi-Galerkin scheme, we now establish the existence of solutions to the ``approximate" equations, starting with the continuity equation. It is given by
	\begin{equation} \label{approximate continuity equation}
		\begin{aligned}
			\partial_t \rho^N + u^N\cdot\nabla\rho^N &= 2 \lambda\Re (\overline{\psi^N}B^N\psi^N) , \\
			\rho^N(0,x) &= \rho^N_0(x) ,
		\end{aligned}
	\end{equation}
	where $B^N = -\frac{1}{2}\Delta + \frac{1}{2}\abs{u^N}^2 + iu^N\cdot\nabla + \mu\abs{\psi^N}^p$. Just as in ~\eqref{constraint to choose existence time}, we see that the constraint that fixes the local existence time $T_N$ for ~\eqref{approximate continuity equation} is
	\begin{equation} \label{constraint to choose approximate existence time}
		2\lambda\norm{\psi^N}_{L^{\infty}_{[0,T_N]}L^{\infty}_x} \norm{B^N\psi^N}_{L^2_{[0,T_N]}L^{\infty}_x} \le \mi-\mf .
	\end{equation}
	Since the norms in ~\eqref{constraint to choose approximate existence time} are bounded by the size of the initial data, the time $T_N$ is independent of ~$N$. Hence, we use $T$ to denote the time of existence, with $T$ arbitrarily large for $1\le p<4$ and~$T$ bounded for $p\ge 4$ (as specified in Theorem ~\ref{almost global existence}).
	
	We now establish the analogs of Lemmas 2.2 and 2.3 from \cite{Kim1987WeakDensity}. These constitute the existence of a unique solution to \eqref{approximate continuity equation} and a Picard iteration scheme for the same, respectively.
	\begin{lem} \label{existence of solutions to approx continuity equation}
		Let $u^N \in C^0_{[0,T]} C^1_x$ and $\overline{\psi^N}B^N \psi^N\in L^1_{[0,T]} L^{\infty}_x$ (uniformly in $N$), with $\nabla\cdot u^N (t,\T^2)=0$ for $t\in [0,T]$. Then, \eqref{approximate continuity equation} has a unique solution $\rho^N \in C^1_{[0,T]}C^1_x$.
	\end{lem}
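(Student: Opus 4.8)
The plan is to solve \eqref{approximate continuity equation} by the method of characteristics, establishing the analogue of Lemma 2.2 of \cite{Kim1987WeakDensity}. Write the source as $g^N := 2\lambda\Re(\overline{\psi^N}B^N\psi^N)$, so that the equation reads $\partial_t\rho^N + u^N\cdot\nabla\rho^N = g^N$. First I would set up the flow: since $u^N\in C^0_{[0,T]}C^1_x$ is continuous in time and uniformly Lipschitz in space on the compact torus, the Cauchy--Lipschitz theorem provides, for each $y\in\T^2$, a unique characteristic $X_y(t)$ solving \eqref{characteristics} on the whole interval $[0,T]$ (global existence is automatic since $u^N$ is bounded). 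The map $\Phi_t\colon y\mapsto X_y(t)$ is then a $C^1$ diffeomorphism of $\T^2$ by the smooth-dependence-on-data theorem, and because $\nabla\cdot u^N = 0$ the flow is volume preserving, $\det(\nabla_y X_y(t)) \equiv 1$; in particular the inverse flow $\Phi_t^{-1}$ is also $C^1$ in $x$, uniformly on $[0,T]$.

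Along each characteristic the PDE collapses to the scalar ODE $\frac{d}{dt}\rho^N(t,X_y(t)) = g^N(t,X_y(t))$, which integrates to
\begin{equation*}
	\rho^N(t,X_y(t)) = \rho_0^N(y) + \int_0^t g^N(\tau,X_y(\tau))\,d\tau,
\end{equation*}
exactly as in \eqref{density along characteristic}. Since $y\mapsto X_y(t)$ is a bijection, setting $y=\Phi_t^{-1}(x)$ defines a candidate $\rho^N(t,x)$ unambiguously; differentiating the composition back along the flow confirms that it solves \eqref{approximate continuity equation} with $\rho^N(0,\cdot)=\rho_0^N$. Uniqueness is immediate from the same computation: the difference of two solutions is constant along characteristics and vanishes at $t=0$, hence vanishes identically (equivalently, one multiplies the homogeneous equation by the difference and applies Gr\"onwall, using $\nabla\cdot u^N=0$ to discard the transport term).

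It remains to verify the claimed $C^1_{[0,T]}C^1_x$ regularity, and this is the step that requires care. The spatial $C^1$ bound on the first term $\rho_0^N\circ\Phi_t^{-1}$ follows from $\rho_0^N\in C^1_x$ and the $C^1$ regularity of $\Phi_t^{-1}$ established above. For the integral term, however, the bare hypothesis $g^N\in L^1_{[0,T]}L^\infty_x$ is \emph{not} enough to produce either spatial or temporal derivatives; that $L^1_tL^\infty_x$ bound is the quantity actually needed for the uniform-in-$N$ existence time in \eqref{constraint to choose approximate existence time}, not for this regularity claim. Here one invokes the explicit structure of the source in the semi-Galerkin scheme: $\psi^N$ and $u^N$ are finite linear combinations \eqref{truncated wavefunction definition}, \eqref{truncated velocity definition} of the smooth eigenfunctions $b_k,a_k$ with coefficients $d_k^N(t),c_k^N(t)$ that are $C^1$ in time (being solutions of the Galerkin ODE system), so that $g^N\in C^1_{[0,T]}C^\infty_x$. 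With $g^N(\tau,\cdot)\in C^1_x$ the integral term is $C^1$ in $x$, and with $g^N\in C^0_t$ the identity $\partial_t\rho^N = g^N - u^N\cdot\nabla\rho^N$ exhibits $\partial_t\rho^N$ as a continuous function, upgrading $\rho^N$ to $C^1$ in time. The main obstacle is thus not the characteristics argument itself but reconciling the weak source hypothesis with the strong conclusion; the resolution is that the finite-dimensional truncation renders the source smooth, so the low-regularity $L^1_tL^\infty_x$ bound is called upon only where uniformity in $N$ is what matters.
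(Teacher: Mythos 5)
Your proof follows essentially the same route as the paper's: characteristics generated by $u^N$, invertibility of the flow via incompressibility ($\det(\nabla_y X_y(t))=1$), the Duhamel-type formula along characteristics, and uniqueness from the homogeneous transport equation. The one place you go beyond the paper is the regularity discussion, and it is a genuine improvement: the paper's proof simply writes the solution formula and never explains how the $L^1_{[0,T]}L^{\infty}_x$ hypothesis on the source could produce $\rho^N \in C^1_{[0,T]}C^1_x$ (by itself it cannot), whereas your observation that the finite-dimensional Galerkin structure makes the source continuous in time and $C^1$ in space --- which is what actually delivers the claimed regularity, with the $L^1_t L^{\infty}_x$ bound needed only for uniformity in $N$ --- correctly fills that gap.
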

	\begin{proof}
		Consider the evolution equation for the characteristics of the flow,
		\begin{equation} \label{evolution of the characteristics}
			\begin{aligned}
				\frac{dx^N}{dt} &= u^N(t,x^N(t)) , \\
				x^N(0) &= y^N \in \T^2 .
			\end{aligned}
		\end{equation}
		Since $u^N \in C^0_{[0,T]} C^1_x$, there exists a unique solution $x^N(t,y^N) \in C^1_{[0,T]} C^1_{x}$.
		Owing to the incompressibility of the flow $u^N$, it follows that $\det\left( \frac{\partial x^N_i}{\partial y^N_j}\right) = 1$, allowing us to conclude that the characteristics are $C^1$ diffeomorphisms and therefore, invertible. This means
		\begin{equation*}
			y^N = y^N(t,x^N) := S^{-1}_t x^N
		\end{equation*}
		is well-defined. We now write the solution to \eqref{approximate continuity equation} along characteristics as    
		\begin{equation} \label{solution to approx continuity eqn, along characteristics}
			\rho^N(t,x) = \rho^N_0\left(y^N(t,x)\right) + 2 \lambda\int_0^t \Re\left( \overline{\psi^N}B^N \psi^N \right) \left( \tau,y^N(t-\tau,x) \right) d\tau .
		\end{equation}
		That \eqref{solution to approx continuity eqn, along characteristics} uniquely solves \eqref{approximate continuity equation} can be verified using the property of the ``inverse-characteristics" $y(t,x)$. For any $\tau\in\R$,
		\begin{equation} \label{inverse characteristics proof}
			\begin{aligned}
				\frac{\partial}{\partial t}y(t-\tau,x) &= \lim_{\Delta t\rightarrow 0} \frac{y(t-\tau+\Delta t,x) - y(t-\tau,x)}{\Delta t} \\
				&= \lim_{\Delta t\rightarrow 0} \frac{x(t+\Delta t,y) - x(t,y)}{\Delta t} \cdot \frac{y(t-\tau+\Delta t,x) - y(t-\tau,x)}{x(t+\Delta t,y) - x(t,y)} \\
				&= u(t,x) \cdot \frac{\partial_t y(t-\tau,x)}{\partial_t x(t,y)} = -u(t,x)\cdot\nabla_{x}y(t-\tau,x) ,
			\end{aligned}
		\end{equation}
		where the last equality is due to Euler's chain rule.
	\end{proof}
	\noindent Now, we consider a convergent sequence of velocities and wavefunctions that belong to the finite-dimensional subspaces spanned by the truncated Galerkin scheme. Given such a convergent sequence, we show that the sequence of density fields satisfying \eqref{approximate continuity equation} is also convergent, and this shall be used to complete a contraction mapping argument below.
	\begin{lem} \label{convergence of solutions to approx continuity equation}
		For $n\in\N$, let $u^N_n \in C^0_{[0,T]} C^1_{x}$ and $\overline{\psi^N_n}B^N_n \psi^N_n\in L^1_{[0,T]} L^{\infty}_x$ (uniformly in $n$), with $\nabla\cdot u^N_n (t,\T^2) = 0$ for $t\in [0,T]$. Denote by $\rho^N_n \in C^1_{[0,T]}C^1_x$ the unique solution to the system
		\begin{equation} \label{sequence approximate continuity equation}
			\begin{aligned}
				\partial_t \rho^N_n + u^N_n\cdot\nabla\rho^N_n &= 2 \lambda\Re (\overline{\psi^N_n}B^N_n\psi^N_n) , \\
				\rho^N_n(0,x) &= \rho^N_0(x) \in C^1_x .
			\end{aligned}
		\end{equation}
		If $u^N_n \xrightarrow[n\rightarrow\infty]{C^0_{[0,T]}C^1_{x}} u^N$ and $\psi^N_n \xrightarrow[n\rightarrow\infty]{C^0_{[0,T]}C^3_{x}} \psi^N$, then $\rho^N_n \xrightarrow[n\rightarrow\infty]{C^0_{[0,T]}C^0_{x}} \rho^N$, where $\rho^N$ solves \eqref{approximate continuity equation}.
	\end{lem}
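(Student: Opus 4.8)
The plan is to exploit the explicit characteristic formula \eqref{solution to approx continuity eqn, along characteristics} from Lemma \ref{existence of solutions to approx continuity equation}, which represents each $\rho^N_n$ as the common initial datum $\rho^N_0 \in C^1_x$ transported along the inverse flow of $u^N_n$, plus a Duhamel integral of the source $f^N_n := 2\lambda\Re(\overline{\psi^N_n}B^N_n\psi^N_n)$ carried along the same flow. Since the data $\rho^N_0$ is identical for every $n$, proving $\rho^N_n \to \rho^N$ reduces to two convergences: that of the (inverse) characteristics generated by $u^N_n$, and that of the sources $f^N_n$.

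First I would show uniform convergence of the flows. Let $x^N_n(t,y)$ and $x^N(t,y)$ solve \eqref{evolution of the characteristics} with velocities $u^N_n$ and $u^N$ respectively. Subtracting the two ODEs, adding and subtracting $u^N(s,x^N_n)$, and using the uniform-in-$n$ bound on $\norm{u^N_n}_{C^0_{[0,T]}C^1_x}$, Grönwall's inequality yields $\norm{x^N_n - x^N}_{C^0_{[0,T]}C^0_x} \lesssim \norm{u^N_n - u^N}_{C^0_{[0,T]}C^0_x}\, e^{T\norm{\nabla u^N}_{C^0}} \to 0$. Applying the same estimate to the backward characteristics (equivalently, to the inverse map via \eqref{inverse characteristics proof}) gives $y^N_n \to y^N$ uniformly on $[0,T]\times\T^2$.

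Next I would pass the sources to the limit in $C^0_{[0,T]}C^0_x$. From $u^N_n \to u^N$ in $C^0_{[0,T]}C^1_x$ and $\psi^N_n \to \psi^N$ in $C^0_{[0,T]}C^3_x$, each term of $B^N_n\psi^N_n = -\tfrac12\Delta\psi^N_n + \tfrac12\abs{u^N_n}^2\psi^N_n + iu^N_n\cdot\nabla\psi^N_n + \mu\abs{\psi^N_n}^p\psi^N_n$ converges uniformly (products of uniformly bounded, uniformly convergent factors converge, and $z\mapsto\abs{z}^p z$ is continuous), so $f^N_n \to f^N := 2\lambda\Re(\overline{\psi^N}B^N\psi^N)$ in $C^0_{[0,T]}C^0_x$. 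I then combine these in \eqref{solution to approx continuity eqn, along characteristics}: writing $\rho^N_n - \rho^N$ as $[\rho^N_0(y^N_n) - \rho^N_0(y^N)]$ plus the $\tau$-integral of $[f^N_n(\tau,y^N_n) - f^N(\tau,y^N)]$, I bound the first bracket by $\norm{\nabla\rho^N_0}_{C^0}\norm{y^N_n-y^N}_{C^0}$, and split the integrand as $[f^N_n - f^N](\tau,y^N_n) + [f^N(\tau,y^N_n) - f^N(\tau,y^N)]$, controlling the former by $T\norm{f^N_n - f^N}_{C^0_{[0,T]}C^0_x}$ and the latter by the modulus of continuity of $f^N$ evaluated at $\norm{y^N_n - y^N}_{C^0}$. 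All terms vanish as $n\to\infty$, so $\rho^N_n \to \rho^N$ in $C^0_{[0,T]}C^0_x$; that $\rho^N$ solves \eqref{approximate continuity equation} is automatic, as it is exactly the characteristic solution of Lemma \ref{existence of solutions to approx continuity equation} for the limiting data $(u^N,\psi^N)$.

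The main obstacle is the \emph{simultaneous} perturbation of both the transport flow and the source inside one composition, which forces the split above into a ``source-difference along the perturbed flow'' term and a ``fixed-source along the flow-difference'' term; the latter requires (uniform) spatial continuity of the limit source $f^N$. This regularity is available precisely because the Galerkin truncation keeps $\psi^N$ and $u^N$ finite-dimensional, hence smooth in $x$ and uniformly continuous on $[0,T]\times\T^2$ — the structural point that renders the otherwise standard continuous-dependence argument rigorous.
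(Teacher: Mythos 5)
Your proposal is correct and follows the same skeleton as the paper's proof: both represent $\rho^N_n$ and $\rho^N$ via the characteristic formula \eqref{solution to approx continuity eqn, along characteristics}, reduce the claim to convergence of the inverse flows $y^N_n$ and of the sources $\Psi^N_n := 2\lambda\Re(\overline{\psi^N_n}B^N_n\psi^N_n)$, and then estimate $\rho^N_n-\rho^N$ by splitting the composed terms. Two of your technical steps differ from the paper's, and both differences are defensible. First, for the convergence $y^N_n\to y^N$, the paper bounds the space and time derivatives of $y^N_n$ uniformly in $n$ and invokes Arzel\`a--Ascoli, which by itself yields only a uniformly convergent \emph{subsequence} (and leaves the identification of its limit with the inverse flow of $u^N$ implicit); your Gr\"onwall comparison of the forward and backward characteristic ODEs gives convergence of the full sequence with an explicit rate of order $\norm{u^N_n-u^N}_{C^0_{[0,T]}C^0_x}e^{CT}$, which is more direct and strictly stronger. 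Second, in estimating the composed source difference inside the Duhamel integral, the paper splits as $[\Psi^N_n(\cdot,y^N_n)-\Psi^N_n(\cdot,y^N)]+[\Psi^N_n(\cdot,y^N)-\Psi^N(\cdot,y^N)]$ and uses the uniform-in-$n$ bound on $\norm{\nabla\Psi^N_n}_{L^{\infty}_t L^{\infty}_x}$ (this is where the $C^0_{[0,T]}C^3_x$ hypothesis on $\psi^N_n$ enters), whereas you split the other way and control the second bracket by the modulus of continuity of the limit source $f^N$, which is uniformly continuous on the compact set $[0,T]\times\T^2$; your variant asks marginally less regularity of the approximants, though under the stated hypotheses either route closes the argument.
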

	\begin{proof}
		We begin by defining $\Psi^N_n := 2 \lambda\Re(\overline{\psi^N_n} B^N_n\psi^N_n)$. Since $u^N_n\in C^0_t C^1_{x}$, there exists a sequence of characteristics $x^N_n(t,y)\in C^1_t C^1_{x}$ corresponding to the flow, i.e., solving $\frac{dx^N_n}{dt} = u^N_n (t,x^N_n)$ with $x^N_n(0,y) = y$. The assumed convergence of $u^N_n$ allows us to conclude that $x^N_n \xrightarrow[n\rightarrow\infty]{C^1_t C^1_{x}}x^N$.
		Consider the map $y\mapsto x^N_n(t,y)$ and define its inverse $y^N_n(t,x)$; this is just the inverse of the characteristic, i.e., if the flow were reversed. Due to the flow being incompressible, we know that the matrix $\frac{\partial y^N_n}{\partial x}$ is invertible. Also, as shown in the proof of the previous lemma, $\frac{\partial}{\partial t}y^N_n = -u^N_n\cdot\nabla_{x}y^N_n$. This implies that the derivatives of $y^N_n$ with respect to both space and time are bounded uniformly in $n$, $t$ and ~$x$. Thus, by the Arzela-Ascoli theorem, we can extract a subsequence that converges uniformly: $y^N_n\xrightarrow[n\rightarrow\infty]{C^0_t C^0_{x}}y^N$. Just as in ~\eqref{inverse characteristics proof}, we can show that the solution to \eqref{sequence approximate continuity equation} is
		\begin{equation} \label{solution to sequence approx continuity eqn, along characteristics}
			\rho^N_n(t,x) = \rho^N_0\left(y^N_n(t,x)\right) + \int_0^t \Psi^N_n \left( \tau,y^N_n(t-\tau,x) \right) d\tau .
		\end{equation}
		Therefore,
		\begin{equation*}
			\begin{aligned}
				\rho^N_n(t,x) - \rho^N(t,x) &= \rho^N_0\left(y^N_n(t,x)\right) + \int_0^t \Psi^N_n \left( \tau,y^N_n(t-\tau,x) \right) d\tau \\ 
				&\quad - \rho^N_0\left(y^N(t,x)\right) - \int_0^t \Psi^N \left( \tau,y^N(t-\tau,x) \right) d\tau
				,
			\end{aligned}
		\end{equation*}
		which leads to
		\begin{equation*}
			\begin{aligned}
				\abs{\rho^N_n - \rho^N}_{C^0_{t,x}} &\le \abs{\rho^N_0\left(y^N_n\right) - \rho^N_0\left(y^N\right)}_{C^0_{t,x}} + T \abs{\Psi^N_n \left( t,y^N_n \right) - \Psi^N \left( t,y^N \right)}_{C^0_{t,x}} \\
				&\le \norm{\nabla\rho^N_0}_{L^{\infty}_x}\abs{y^N_n - y^N}_{C^0_{t,x}} \\ 
				&\quad + T\left( \norm{\nabla\Psi^N_n}_{L^{\infty}_t L^{\infty}_x}\abs{y^N_n - y^N}_{C^0_{t,x}} + \abs{\Psi^N_n - \Psi^N}_{C^0_{t,x}} \right) \\
				&\xrightarrow[n\rightarrow\infty]{} 0.
			\end{aligned}
		\end{equation*}
		Given the convergence of $y^N_n$ derived above, and because $\rho^N_0\in C^1_x$, the first term on the RHS vanishes. The second and third terms vanish on account of the following argument. Note that $\Psi^N_n$ has its highest order term of the form $\psi^N_n\Delta\psi^N_n$ (second derivative), and so the assumed convergence of ~$\psi^N_n$ in the $C^0_t C^3_x$ norm implies that $\Psi^N_n$ converges in $C^0_t C^1_x$. This also guarantees that ~$\norm{\nabla\Psi^N_n}_{L^{\infty}_{t} L^{\infty}_x}$ is finite, uniformly in $n$. 
	\end{proof}

	\subsubsection{The Navier-Stokes equation}
	We now consider an ``approximate momentum equation", composed of the approximate wavefunction and velocity fields defined by \eqref{truncated wavefunction definition} and \eqref{truncated velocity definition}, respectively. Namely,
	\begin{equation} \label{approximate NSE}
		P^N\left(\rho^N\partial_t u^N + \rho^N u^N\cdot\nabla u^N - \nu\Delta u^N \right) = - 2\lambda P^N\left(\Im\left( \nabla\overline{\psi^N}B^N\psi^N \right) + u^N\Re\left(\overline{\psi^N}B^N\psi^N \right) \right) .
	\end{equation}
	Recall that the incompressiblity condition is built-in, because the eigenfunction basis used to construct the velocity fields are divergence-free. Now, taking the $L^2$ inner product of \eqref{approximate NSE} with $a_j(x)$ for $0\le j\le N$, we arrive at a system of equations for the coefficients describing the time-dependence of the approximate velocity fields, as
	\begin{equation} \label{expansion of approximate NSE}
		\sum_{k=0}^N R^N_{jk}(t) \frac{d}{dt}c^N_k(t) = -\nu \alpha_j c^N_j(t) - \sum_{k,l=0}^N \mathcal{N}^N_{jkl}(t) c^N_k(t) c^N_l(t) - 2 \lambda S^N_j(t,c^N) ,
	\end{equation}
	where
	\begin{equation*}
		R^N_{jk}(t) = \int_{\T^2}\rho^N a_j\cdot a_k , \quad \mathcal{N}^N_{jkl}(t) = \int_{\T^2}\rho^N \left( a_k\cdot\nabla \right) a_l \cdot a_j ,
	\end{equation*}
	and
	\begin{equation*}
		S^N_j(t,c^N) = \int_{\T^2}a_j\cdot \left( \Im\left( \nabla\overline{\psi^N}B^N\psi^N \right) + u^N\Re\left(\overline{\psi^N}B^N\psi^N \right) \right) .
	\end{equation*} 
	Since we have both lower and upper bounds on the density in the chosen interval of time, we can use Lemma 2.5 in \cite{Kim1987WeakDensity} to show that the matrix $R^N(t)$ is invertible. Therefore, we arrive at
	\begin{equation} \label{evolution equation for c^N}
		\frac{d}{dt}c^N = -\nu (R^N)^{-1} D\cdot c^N - (R^N)^{-1}\left( \mathcal{N}^N : c^N \otimes c^N \right) -2 \lambda (R^N)^{-1} S^N(t,c^N) ,
	\end{equation} 
	which is the desired evolution equation (written vectorially) for the coefficients $c^N_j(t)$.
	
	\subsubsection{The nonlinear Schr\"odinger equation}
	
	As in the previous section, we derive an evolution equation for the coefficients of the approximate wavefunction, by considering an ``approximate NLS". Namely,
	\begin{equation} \label{approximate NLS}
		\partial_t \psi^N = -\frac{1}{2i}\Delta \psi^N - Q^N\left(  \lambda B^N_L \psi^N + (\lambda + i)\mu \abs{\psi^N}^p \psi^N \right) .
	\end{equation} 
	Recall that $B_L = B - \mu \abs{\psi}^p$, i.e., the linear (in $\psi$) part of the coupling operator. Performing an $L^2$ inner product with $b_j(x)$, we get
	\begin{equation}
		\frac{d}{dt}d^N_j (t) = \frac{1}{2i} \beta_j d^N_j(t) -  \lambda\sum_{k=0}^N L^N_{jk}(t) d^N_k(t) - ( \lambda + i)\mu \sum_{k,l,m=0}^N G_{jklm} \left(d^N_k \overline{d^N_l}\right)^{\frac{p}{2}} d^N_m (t)
		,
	\end{equation} 
	where
	\begin{equation*}
		L^N_{jk}(t) = \int_{\T^2} b_j B^N_L b_k = \frac{1}{2}\beta_j\delta_{jk} + \frac{1}{2}\int_{\T^2} \abs{u^N}^2 b_j b_k + i\int_{\T^2} \left(u^N\cdot\nabla b_k\right) b_j 
	\end{equation*}
	and
	\begin{equation*}
		G_{jklm} = \int_{\T^2} b_j (b_k b_l)^{\frac{p}{2}} b_m .
	\end{equation*}
	Written vectorially, the evolution equation for the coefficients $d^N_j(t)$ becomes
	\begin{equation} \label{evolution equation for d^N}
		\frac{d}{dt}d^N = \frac{1}{2i} \ \mathcal{B} d^N -  \lambda \  L^N\cdot d^N - ( \lambda + i)\mu \ G :: \left( (d^N\otimes\overline{d^N})^{\frac{p}{2}}\otimes d^N\right) ,
	\end{equation} 
	where $\mathcal{B}_{ij} = \beta_i\delta_{ij}$.  
	
	\subsubsection{Fixed point argument for the coefficients} \label{Fixed point argument for the coefficients}
	For a fixed $N$, a standard contraction mapping argument shows that \eqref{evolution equation for c^N} and \eqref{evolution equation for d^N} have unique solutions that are continuous in $[0,T]$. For a pair $(u^N_n,\psi^N_n)$, equivalently $(c^N_n,d^N_n)$, using Lemma \ref{existence of solutions to approx continuity equation}, we can find a solution $\rho^N_n$. Owing to the smoothness (in space) of the eigenfunctions used in the approximate velocity and wavefunction, we conclude that $u^N_n \xrightarrow[n\rightarrow\infty]{C^0_t C^1_x}u^N$ and $\psi^N_n \xrightarrow[n\rightarrow\infty]{C^0_t C^3_x}\psi^N$. Therefore, performing an iteration on the triplet $(c^N_n,d^N_n,\rho^N_n)$ and using Lemma \ref{convergence of solutions to approx continuity equation}, we conclude that the sequence $\rho^N_n$ converges to $\rho^N\in C^0_{[0,T]} C^0_{x}$. 
	
	
	
	\subsection{Compactness arguments} \label{compactness arguments}
	We now extract convergent subsequences from the a priori estimates in Section ~\ref{a priori estimates}. Beginning with the density, we know that $\rho^N \in C^0([0,T];C^0_x) \subset L^{\infty}(0,T;L^r_x)$ for $1\le r\le\infty$, meaning that 
	\begin{equation} \label{weak convergence of density}
		\rho^N \xrightharpoonup[L^{\infty}_t L^r_x]{\ast} \rho .
	\end{equation}
	Moreover, from \eqref{approximate continuity equation}, 
	\begin{equation} \label{partial_t rho^N uniformly bounded in L^2_t H^-1_x}
		\begin{aligned}
			\norm{\partial_t \rho^N}_{L^2_{[0,T]}H^{-1}_x} &\lesssim \norm{\nabla\cdot (u^N\rho^N)}_{L^2_{[0,T]}H^{-1}_x} + \norm{\Re (\overline{\psi^N}B^N\psi^N)}_{L^2_{[0,T]}H^{-1}_x} \\
			&\lesssim \norm{u^N\rho^N}_{L^2_{[0,T]}L^2_x} + \norm{ (\overline{\psi^N}B^N\psi^N)}_{L^2_{[0,T]}L^2_x} \\
			&\lesssim \norm{\sqrt{\rho^N}u^N}_{L^2_{[0,T]}L^2_x}\norm{\sqrt{\rho^N}}_{L^{\infty}_{[0,T]}L^{\infty}_x} + \norm{ \psi^N}_{L^{\infty}_{[0,T]}L^{\infty}_x}\norm{ B^N\psi^N}_{L^2_{[0,T]}L^2_x} .
		\end{aligned}    
	\end{equation} 
	The second inequality is due to the (compact) embedding $L^2_x \subset H^{-1}_x$ for $\T^2$. All the terms in the last line are finite (uniformly in $N$) by virtue of the a priori estimates. Therefore, using the Aubin-Lions-Simon lemma, we conclude the strong convergence 
	of a subsequence of the density as
	\begin{equation} \label{strong convergence of density}
		\rho^N \xrightarrow[]{C^0_t H^{-1}_x} \rho .
	\end{equation} 
	Consider a relabeled subsequence $\rho^N$ that strongly converges to $\rho$ in $C([0,T];H^{-1}_x)$, so that ~\eqref{truncated wavefunction definition} and ~\eqref{truncated velocity definition} are also appropriately relabeled. For a.e.~$s,t\in [0,T]$ and any $\omega \in H^1_x$,
	\begin{equation*}
		\begin{aligned}
			\langle \rho^N(t)-\rho^N(s),\omega \rangle_{H^{-1}\times H^1} &= \langle \int_s^t \partial_t \rho^N d\tau ,\omega \rangle_{H^{-1}\times H^1} \\ 
			&\le \int_s^t \norm{\partial_t \rho^N}_{H^{-1}_x} \norm{\omega}_{H^1_x}\le (t-s)^{\frac{1}{2}} \norm{\partial_t \rho^N}_{L^2_{[0,T]} H^{-1}_x} \norm{\omega}_{H^1_x} ,
		\end{aligned}
	\end{equation*} 
	showing that $\langle \rho^N(t),\omega \rangle_{H^{-1}\times H^1}$ is uniformly continuous in $[0,T]$, uniformly in $N$ due to ~\eqref{partial_t rho^N uniformly bounded in L^2_t H^-1_x}. Due to the embedding $H^1_x\subset L^r_x$ for all $1\le r<\infty$, we conclude, using the Arzela-Ascoli theorem, that $\rho^N$ is relatively compact in $C_w([0,T];L^r_x)$.
	
	We move on to the velocity. Based on the a priori estimates, we extract a subsequence of $u^N$ that weakly converges to $u\in L^{\infty}_{[0,T]}H^1_{\text{d},x} \cap L^2_{[0,T]}H^2_{\text{d},x}$, with $\partial_t u \in L^2_{[0,T]}L^2_{x}$. Applying the Lions-Magenes lemma (see ~\cite[Chapter 3]{Temam1977Navier-StokesAnalysis}), we deduce that $u\in C([0,T];H^1_{\text{d},x})$. Based on the $L^{\infty}_{t} L^{\infty}_x$ bound on the density, and the above strong convergences, it is easy to see that $\rho^N u^N$ and $\rho^N u^N \otimes u^N$ converge in $C([0,T];L^2_x)$ to $\rho u$ and $\rho u \otimes u$, respectively.
	
	Next, we consider the wavefunction. Again, we extract a subsequence that converges weakly to $\psi\in L^{\infty}_{[0,T]}H^{\frac{5}{2}}_{x} \cap L^2_{[0,T]}H^{\frac{7}{2}}_{x}$. From this and ~\eqref{NLS}, we have $\partial_t \psi \in L^2_{[0,T]}H^{\frac{3}{2}}_{x}$. Thus, the Lions-Magenes lemma yields $\psi\in C([0,T];H^{\frac{5}{2}}_{x})$. Additionally, we also have $B^N \psi^N \xrightarrow[]{C^0_t L^2_x} B\psi$, due to the regularity of $u$ and $\psi$.  
	
	As for the initial conditions, by construction itself (Section \ref{the initial density}), we have $\rho_0^N \xrightarrow[]{L^r_x} \rho_0$ for $1~\le ~r<~\infty$. Also, Lemma \ref{Q^N and P^N are convergent} states that $\psi_0^N$ and $u_0^N$ converge to $\psi_0$ and $u_0$ in $H^{\frac{5}{2}}_x$ and $H^{1}_{\text{d},x}$, respectively. For the momentum, we have
	\begin{equation}
		\norm{\rho_0^N u_0^N - \rho_0 u_0}_{L^2_x} \le \norm{\rho_0^N - \rho_0}_{L^r_x} \norm{u_0^N}_{L^{r'}_x} + \norm{\rho_0}_{L^{\infty}_x} \norm{u_0^N - u_0}_{L^2_x} ,
	\end{equation} 
	where $\frac{1}{r} + \frac{1}{r'} = \frac{1}{2}$. Using the embedding $H^{1}_x \subset L^{r'}_x$ to handle the velocity in the first term of the RHS, it is easy to see that the initial momentum converges in the $L^2_{x}$ norm.
	
	The approximate solutions $(\psi^N,u^N,\rho^N)$ are smooth enough to satisfy ~\eqref{weak solution wavefunction}--\eqref{weak solution density}. The aforementioned compactness results allow us to pass to the limit of $N\rightarrow\infty$ and arrive at the weak solutions $(\psi,u,\rho)$.

	\subsection{Renormalizing the density}
	At this point, we know that $\rho^N \xrightharpoonup{\ast} \rho$ in $L^{\infty}_t L^{\infty}_x$. We wish to use the technique of renormalization to extend this to $\rho^N \rightarrow \rho$ in $C^0_t L^r_x$, for $1\le r<\infty$. To achieve this, we will adapt a classical argument (see, for instance, Theorem 2.4 in \cite{Lions1996MathematicalMechanics}). We begin by defining a sequence of unit-mass mollifiers $\zeta_{h}(x) = \frac{1}{h^2} \zeta\left( \frac{x}{h} \right)$, where $h$ will eventually be taken to 0. Next, for a given weak solution $\rho \in L^{\infty}_t L^{\infty}_x$, we mollify ~\eqref{continuity} to obtain
	\begin{equation} \label{mollified continuity equation}
		\partial_t \rho_h + u\cdot\nabla\rho_h = \Psi_h + R_h ,
	\end{equation} 
	where $g_h := g * \zeta_h$, $\Psi := 2 \lambda\Re(\overline{\psi}B\psi)$, and $R_h := u\cdot\nabla\rho_h - \left( u\cdot\nabla\rho \right)_h$ is a commutator. We multiply this by $\eta'(\rho_h)$, for a $C^1$ function $\eta:\R\mapsto\R$. This yields
	\begin{equation} \label{mollified renormalized continuity equation}
		\partial_t \eta(\rho_h) + u\cdot\nabla\eta(\rho_h) = \eta'(\rho_h)\Psi_h + \eta'(\rho_h) R_h .
	\end{equation}
	The Sobolev embedding $H^2_x\subset W^{1,r_1}_x$ for any $r_1\in [1,\infty)$ implies that $u\in L^2_t W^{1,r_1}_x$. From Lemma ~2.3 in \cite{Lions1996MathematicalMechanics}, we note that $R_h$ vanishes in $L^2_t L^{r_1}_x$ (and also in $L^{\infty}_t L^2_x$) as $h\rightarrow 0$, by choosing $r_1>2$. Similarly, $\Psi_h$ converges to $\Psi$ in $C^0_t L^2_x$. Finally, note that $\eta'(\rho_h)$ is uniformly continuous since $\rho$ (and $\rho_h$) take values in a compact subset of $\R$. Therefore, using a test function $\sigma$, we may pass to the limit $h\rightarrow 0$ in \eqref{mollified renormalized continuity equation}. In other words, if $\rho$ is a weak solution of the continuity equation, then $\eta(\rho)$ solves (in a weak sense)
	\begin{equation} \label{renormalized continuity equation}
		\partial_t \eta(\rho) + u\cdot\nabla\eta(\rho) = \eta'(\rho)\Psi .
	\end{equation}
	This is the renormalized continuity equation.
	
	Taking the difference of \eqref{mollified continuity equation} for $h_1,h_2 >0$, we write the analog of ~\eqref{mollified renormalized continuity equation} for $\eta(\rho_{h_1} - \rho_{h_2})$, with $\eta(x) = x^{2n}$ where $n\in\N$. Integrating over $\T^2$ leads to
	\begin{equation*}
		\begin{aligned}
			\frac{d}{dt} \norm{\rho_{h_1} - \rho_{h_2}}_{L^{2n}_x}^{2n} &= \int_{\T^2} 2n \left(\rho_{h_1} - \rho_{h_2}\right)^{2n-1} \Big( (\Psi_{h_1} - \Psi_{h_2}) + (R_{h_1} - R_{h_2}) \Big) \\
			&\lesssim \norm{\rho_{h_1} - \rho_{h_2}}_{L^{2n}_x}^{2n-1} \left( \norm{\Psi_{h_1} - \Psi_{h_2}}_{L^{2n}_x} + \norm{R_{h_1} - R_{h_2}}_{L^{2n}_x} \right) ,
		\end{aligned}
	\end{equation*}
	which implies
	\begin{equation} \label{cauchy sequence of rho in L^r_x}
		\begin{aligned}
			\sup_{t\in [0,T]}\norm{\rho_{h_1} - \rho_{h_2}}_{L^{2n}_x} &\lesssim \norm{\rho(0)*\zeta_{h_1} - \rho(0)*\zeta_{h_2}}_{L^{2n}_x} \\
			&\quad + \int_0^T \left( \norm{\Psi_{h_1} - \Psi_{h_2}}_{L^{2n}_{x}} + \norm{R_{h_1} - R_{h_2}}_{L^{2n}_{x}} \right) .
		\end{aligned}
	\end{equation} 
	Since we know $\psi\in L^2_t H^2_x$ and $B\psi\in L^2_t H^{\frac{3}{2}}_x$, it follows from the Sobolev embedding and H\"older's inequalities that $\Psi = \overline{\psi} B\psi \in L^1_t L^{r_1}_x$ for any $r_1\in [1,\infty)$. Between this, the commutator estimate in Lemma 2.3 of ~\cite{Lions1996MathematicalMechanics}, and the boundedness of $\rho_0$, we find that all of the terms on the RHS of ~\eqref{cauchy sequence of rho in L^r_x} vanish as $h_1,h_2\rightarrow 0$, giving us a Cauchy sequence in $C([0,T];L^{2n}_x)$. Hence, $\rho_h$ converges to $\rho$ in $C([0,T];L^{2n}_x)$. We have, so far, proved that our ``original approximations" of the continuity equation $\rho^N$ converge in $C_w([0,T];L^r_x)$ to $\rho$, and that $\rho$ also belongs to $C([0,T];L^{2n}_x)$. To achieve what we set out to prove, i.e., that $\rho^N$ converges strongly in $C([0,T];L^r_x)$ to $\rho$, it remains to show that the $L^r_x$ norms are continuous in time. It is sufficient to illustrate this for $r=2$ (or $n=1$), in order to deduce it for the other values of $r$. Explicitly, if there is a sequence of times $t^N \rightarrow t$, then we need $\rho^N(t^N)$ to converge in $L^2_x$ to $\rho(t)$. Returning to \eqref{approximate continuity equation}, we look at its renormalized version with $\eta(x) = x^2$, and integrate over $\T^2$ (and then from $0$ to $t^N$) to get
	\begin{equation*}
		\int_{\T^2} (\rho^N(t^N))^2 = \int_{\T^2}(\rho_0^N)^2 + 2 \lambda \int_0^{t^N}\int_{\T^2} \rho^N \Re\left(\overline{\psi^N} B^N\psi^N\right) .
	\end{equation*}
	Since we know that $\rho\in C([0,T];L^2_x)$, we can do the same calculation with \eqref{continuity}, except over the time interval $0$ to $t$. This yields
	\begin{equation*}
		\int_{\T^2} (\rho(t))^2 = \int_{\T^2}(\rho_0)^2 + 2 \lambda \int_0^{t}\int_{\T^2} \rho \Re\left(\overline{\psi} B\psi \right).
	\end{equation*}
	Subtracting the last two equations, and taking the limit $N\rightarrow\infty$, we observe that the first terms on the RHS cancel (recall from Section \ref{the initial density} that $\rho_0^N \xrightarrow[]{L^2_x}\rho_0$). What remains is,
	\begin{equation*}
		\begin{aligned}
			\lim_{N\rightarrow\infty} \left( \int_{\T^2} (\rho^N(t^N))^2 - \int_{\T^2} (\rho(t))^2 \right) &= 2\lambda\Re \lim_{N\rightarrow\infty} \int_0^{t^N}\int_{\T^2} (\rho^N-\rho) \overline{\psi^N} B^N\psi^N \\ 
			&\quad + 2\lambda\Re \lim_{N\rightarrow\infty} \int_0^{t^N}\int_{\T^2} \rho \left(\overline{\psi^N}-\overline{\psi}\right) B^N\psi^N \\ 
			&\quad + 2\lambda\Re \lim_{N\rightarrow\infty} \int_0^{t^N}\int_{\T^2} \rho \overline{\psi} \left( B^N\psi^N - B\psi \right) \\ 
			&\quad + 2\lambda\Re \lim_{N\rightarrow\infty} \int_t^{t^N}\int_{\T^2} \rho \overline{\psi} B\psi.
		\end{aligned}
	\end{equation*}
	Thanks to the uniform boundedness of $\overline{\psi^N} B^N\psi^N$ in $L^1_{[0,T]} H^{\frac{3}{2}}_x$, we can use the strong convergence in ~\eqref{strong convergence of density} to handle the first term on the RHS. The second and third terms follow from simple H\"older's inequalities, and the strong convergence of $\psi^N$ of $B^N\psi^N$. Finally, the last term is integrable on $[0,T]$, so as $t^N\rightarrow t$, it vanishes. In summary,
	\begin{equation} \label{strong convergence of density in C^0_t L^2_x}
		\rho^N \xrightarrow[]{C^0_t L^2_x} \rho ,
	\end{equation}
	which, along with the weak-in-time continuity deduced earlier, implies strong convergence of $\rho^N$ to $\rho$ in $C^0_t L^{2n}_x$ for all $n\in\N$. Interpolating between Lebesgue norms extends this result to $C^0_t L^r_x$ for all $r\in [1,\infty)$.
	

	\subsection{The energy equality}
	The smooth approximations to the weak solutions satisfy an energy equation, given by ~\eqref{energy equality for weak solutions}, i.e., 
	\begin{equation} \label{energy equality for smooth approximations}
		\begin{aligned}
			&\left( \frac{1}{2}\norm{\sqrt{\rho^N (t)}u^N (t)}_{L^2_x}^2 + \frac{1}{2}\norm{\nabla\psi^N(t)}_{L^2_x}^2 + \frac{2\mu}{p+2}\norm{\psi^N(t)}_{L^{p+2}_x}^{p+2} \right) \\ 
			&\quad + \nu\norm{\nabla u^N}_{L^2_{[0,t]}L^2_x}^2 + 2 \lambda\norm{B^N\psi^N}_{L^2_{[0,t]}L^2_x}^2 \\ 
			&= \frac{1}{2}\norm{\sqrt{\rho_0^N}u_0^N}_{L^2_x}^2 + \frac{1}{2}\norm{\nabla\psi_0^N}_{L^2_x}^2 + \frac{2\mu}{p+2}\norm{\psi_0^N}_{L^{p+2}_x}^{p+2} ,
		\end{aligned}
	\end{equation} 
	for a.e.~$t\in [0,T]$. From our choice of the initial conditions and their approximations (see Section \ref{the initial conditions}), we can ensure that as $N\rightarrow\infty$, the RHS converges to the initial energy $E_0$ defined in \eqref{E0 definition}. Indeed, for the first term,
	\begin{equation} \label{convergence of initial kinetic energy}
		\begin{aligned} 
			\abs{\norm{\sqrt{\rho_0^N}u_0^N}_{L^2_x}^2 - \norm{\sqrt{\rho_0}u_0}_{L^2_x}^2} &= \abs{\int_{\T^2} \rho_0^N \abs{u_0^N}^2 - \rho_0 \abs{u_0}^2} \\
			&\lesssim \norm{\rho_0^N - \rho_0}_{L^2_x} \norm{u_0^N}_{L^4_x}^2 + \norm{\rho_0}_{L^{\infty}_x} \norm{u_0^N + u_0}_{L^2_x} \norm{u_0^N - u_0}_{L^2_x} \\
			&\xrightarrow[]{N\rightarrow\infty} 0 .
		\end{aligned}
	\end{equation}
	Moreover, based on the results of Section \ref{compactness arguments}, we can conclude that all the terms on the LHS of ~\eqref{energy equality for smooth approximations} converge strongly to the corresponding terms with the approximate solutions replaced by the weak solution. The first term on the LHS can be dealt with the same way as the first term on the RHS in \eqref{convergence of initial kinetic energy}, by simply including a $\sup_t$ outside the absolute values.
	\qed
	
	This completes the construction of the solutions. Together with the global/almost global estimates from Section ~\ref{a priori estimates}, we can conclude the results of Theorems ~\ref{global existence} and ~\ref{almost global existence}.

	\addtocontents{toc}{\protect\setcounter{tocdepth}{0}}
	
	\section*{Acknowledgments}
    J.J. and I.K. were supported by the NSF grants DMS-2009458 and DMS-2205493, respectively. The authors appreciate the comments of the referees and the editors which helped improve the manuscript.
	
	
	\addtocontents{toc}{\protect\setcounter{tocdepth}{2}}
	
	\bibliographystyle{alpha}
	\bibliography{references.bib}
	
\end{document}